\newtheorem{theorem}{Theorem}
[section]%
\newtheorem{proposition}[theorem]{Proposition}%
\newtheorem{lemma}[theorem]{Lemma}%
\theoremstyle{remark}%
\newtheorem{remark}{Remark}%
\theoremstyle{definition}%
\newcommand{\dv}{\operatorname{div}}
\numberwithin{equation}{section}
\newcommand{\bN}{\mathbb{N}}
\newcommand{\bR}{\mathbb{R}}
\newcommand{\lomu}{\underline{\mu}}
\newcommand{\al}{\alpha}
\newcommand{\be}{\beta}
\newcommand{\ep}{\epsilon}
\newcommand{\lt}{\left}
\newcommand{\rt}{\right}
\newcommand\cC{\mathcal{C}}
\newcommand\cI{\mathcal{I}}
\newcommand\cJ{\mathcal{J}}
\newcommand\cO{\mathcal{O}}
\newcommand\te{\theta}
\newcommand{\ud}{\textnormal{d}}
\newcommand{\dt}{\ud t}
\providecommand{\norm}[1]{\lVert#1\rVert}
\newcommand{\Lam}{\Lambda}
\newcommand{\p}{\partial}
\begin{document}
\title[]
{Asymptotic stability of the $2D$ temperature-dependent tropical climate model with the sharp decay rates}

\author[H. In]{Hyunjin In}
\address[Hyunjin In]{Research Institute of Basic Sciences, Ajou University 206, World cup-ro, Yeongtong-gu, Suwon-si, Gyeonggi-do, Republic of Korea}
\email{hyunjinin@ajou.ac.kr}

\author[D. Kim]{Dong-ha Kim}
\address[Dong-ha Kim]{CAU Nonlinear PDE center, Chung-Ang University, 84 Heukseok-ro, Dongjak-gu, Seoul 06974, Republic of Korea}
\email{kimdongha91@cau.ac.kr}

\author[J. Kim]{Junha Kim}
\address[Junha Kim]{Department of mathematics, Ajou University 206, World cup-ro, Yeongtong-gu, Suwon-si, Gyeonggi-do, Republic of Korea}
\email{junha02@ajou.ac.kr}

\keywords{Tropical climate model, Asymptotic stability, Temporal decay estimates, Temperature-dependent diffusion}

\begin{abstract}
We investigate the asymptotic stability of a tropical climate model posed on $\bR^2$, with temperature-dependent diffusion in the barotropic mode $u$ and linear damping in the first baroclinic mode $v$.
We consider two distinct cases for the barotropic component: one with linear damping and one without. 
For both cases, we prove the small data global existence of smooth solutions. Furthermore, we establish sharp temporal decay estimates for solutions in arbitrary Sobolev norms $H^m (\bR^2)$, $m \ge 0$.
\end{abstract}

\maketitle
\tableofcontents
\section{Introduction and main results}
\label{S1}

Frierson--Majda--Pauluis \cite{MR2119930} introduced the original version of the tropical climate model in $\bR^2$:
\begin{equation}
\label{E10}
\begin{cases}
    u_t + (u \cdot \nabla) u + \nabla p 
    = - \dv (v \otimes v), \\
    v_t + (u \cdot \nabla) v + (v \cdot \nabla) u = \nabla \theta, \\
    \theta_t + u \cdot \nabla \theta = \dv v,\\
    \dv u =0.
\end{cases}
\end{equation}
Here, $u$ and $v$ are the barotropic and the first baroclinic modes of the velocity field, respectively.
$\theta$ represents the temperature, and $p$ is the pressure.

Since the system \eqref{E10} is used to describe various physics phenomena, such as atmospheric and oceanic dynamics, it has attracted considerable interest from mathematicians.
Physically, the atmospheric dynamics in the Earth's tropics are largely characterized by the first baroclinic mode, where winds in the lower and upper troposphere are of equal magnitude but opposite sign. While this mode captures the primary vertical structure, a comprehensive study of phenomena like tropical-extratropical interactions requires accounting for the momentum exchange between the baroclinic component and the barotropic mode (the vertical average). Therefore, retaining both the barotropic and baroclinic modes is essential for a physically complete model (see \cite[P. 957]{MR3916791}).


In this paper, we consider the system \eqref{E10} with the diffusion and the linear damping terms in the barotropic mode $u$, the linear damping term in the first baroclinic mode $v$: 
\begin{align}
\label{E11}
\begin{cases}
    u_t + (u \cdot \nabla) u + \nabla p  -\dv(\mu(\theta)\nabla u)  +\alpha u
    = - \dv (v \otimes v), \\
    v_t + (u \cdot \nabla) v + (v \cdot \nabla) u +\beta v= \nabla \theta, \\
    \theta_t + u \cdot \nabla \theta = \dv v, \\
    \dv u =0.
\end{cases}
\end{align}
Here $\alpha\geq0$ is the nonnegative damping parameter and $\beta>0$ is the positive damping parameter. 
The viscosity coefficient of the barotropic mode, denoted by $\mu(\theta)$, is the composition of temperature $\theta$ and some smooth function $\mu: \bR \rightarrow \bR^+$ which satisfies
\[
\mu(\cdot) \in \cC^k( \bR) \quad \text{for all } \,k > 1,
\]
and has a positive lower bound $ \underline{\mu} > 0$.

Mathematically, for the $2D$ case, Li and Titi in \cite{MR3479523} established the global well-posedness of \eqref{E10} with diffusion in $u$ and $v$.
To overcome the absence of the thermal diffusion, they utilized a new unknown $\omega = v - \nabla (- \Delta)^{-1} \theta$.
In \cite{MR3455664}, Wan established the global well-posedness for \eqref{E10} with diffusion term $u$ and linear damping term $u$, $v$ under small initial data in 
$\norm{u_0}_{H^s (\bR^2)}
+ \norm{v_0}_{H^s (\bR^2)}
+ \norm{\theta_0}_{H^s (\bR^2)}$ with $s > 2$.
Later, Ma and Wan \cite{MR3698155} proved the global well-posedness of \eqref{E10} with diffusion term $u$ and linear damping term $v$ under small initial data 
$\norm{u_0}_{H^s (\bR^2) \cap \dot{B}_{2,1}^0 (\bR^2)}
+ \norm{v_0}_{H^s (\bR^2)}
+ \norm{\theta_0}_{H^s (\bR^2)}$ with $s > 3$.
In \cite{MR4495491}, Niu and Wu studied the global well-posedness and large-time behavior of solutions to the system \eqref{E10} with diffusion term $u$ and linear damping term $v$.

For temperature-dependent diffusion in \eqref{E11}, Ye and Zhu \cite{MR3886970} proved the existence of the global strong solutions to the system \eqref{E11} with $\alpha, \beta > 0$ under the initial data 
$\norm{u_0}_{H^s (\bR^2)}
+ \norm{v_0}_{H^s (\bR^2)}
+ \norm{\theta_0}_{H^s (\bR^2)}$ with $s > 1$ is suitably small.
In \cite{MR4106813}, Ye and Zhu proved the global strong solutions to
the system \eqref{E11} in $\beta = 0$ with temperature-dependent diffusion $u$ and $v$ provided that the initial data $\norm{v_0}_{H^s (\bR^2)}
+ \norm{\theta_0}_{H^s (\bR^2)}$ with $s > 1$ is small enough.
Moreover, several results on different models of temperature-dependent diffusion in \eqref{E11} can be found in \cite{MR4281486,MR4385767}).
For the system \eqref{E10} with fractional order dissipation, we refer the reader to \cite{MR3554729,MR3932724,MR3927105,MR3926071,MR4084151,MR4112846,MR4135783,MR4228545}
and the references therein.
For other results in various types of \eqref{E10}, we refer the reader to
\cite{MR4151388,MR4218050,MR4328431,MR4583541,MR4844575}
and the references therein.

In this paper, we investigate the asymptotic stability analysis and derive the temporal decay estimates for the system~\ref{E11}, with particular attention to the effect of the linear damping term $\alpha u$ by dividing it into two distinct cases: either $\alpha = 0$ or $\alpha > 0$. Each case would be analyzed separately to highlight the differences. 
In particular, assuming that a certain Sobolev norm of the initial data is suitably small, we establish asymptotic stability and temporal decay estimates in a Sobolev space of higher regularity.
Since we consider the temperature $\theta$ in the absence of thermal diffusion, the damping term for $\theta$ does not appear in the energy estimates.
To overcome this difficulty, we are inspired by \cite{MR3886970,MR4106813}.
A key component of our approach is to utilize the intrinsic structure of the model to derive a damping term for $\theta$ in the energy estimates.
This term plays a crucial role in the proof of our main results.

We are now ready to state the main theorems. To state more neatly, we define a positive constant $\lambda = \lambda(\alpha, \beta, \underline{\mu})$ by
\begin{equation}
\label{lambda}
\lambda :=
\begin{cases}
\left(\frac{1}{2}\min(\underline{\mu}, \frac{\beta}{4+2\beta^2})\right)^{\frac{1}{2}} &\quad \mbox{if }\al =0,\\
    \left(\frac{1}{2}\min(\al, \underline{\mu}, \frac{\beta}{4+2\beta^2})\right)^{\frac{1}{2}} &\quad \mbox{if }\al >0.
    \end{cases}
\end{equation} 

Here are our main results.

\begin{theorem}[Undamped case]\label{main_thm:no_damping} Let $\alpha =0$ and $s>1$.
There exists a positive number $\ep_1$, depending only on $s, \beta, \lomu$, and $\|\mu'\|_{C^{s-1}}$, which satisfies the following: 
let the initial data $(u_0,v_0,\theta_0)$ belong to $\mathcal{C}_0^{\infty} (\mathbb{R}^2)$, and assume that $\dv u_0=0$. Suppose that $0<\epsilon<\epsilon_1$ and
\begin{equation}
\label{E7}
\|u_0\|_{H^s(\bR^2)}+\|v_0\|_{H^s(\bR^2)}+\|\te_0\|_{H^s(\bR^2)} < \ep.
\end{equation}
Then, the Cauchy problem \eqref{E11} has a unique global solution $(u,v,\te) \in \mathcal{C}^{\infty} ([0,\infty); \cC^{\infty} (\bR^2))$. Indeed, for each $m \ge s$, there exists a constant $C$ which depends only on $m, s, \beta$, and $\|\mu'\|_{C^{m-1}(\bR)}$ such that
\begin{equation}
\label{E15}
\begin{split}
\sup_{t \in [0,\infty)}\left(\|u(t)\|_{H^m(\bR^2)}
+\|v(t)\|_{H^m(\bR^2)}
+\|\te(t)\|_{H^m(\bR^2)} \right)
+ 
&\lambda
\left( \int_0^{\infty}  
\| \nabla u(t)\|_{H^m (\bR^2)}^2
+ \|v(t)\|_{H^m(\bR^2)}^2
+ \| \nabla \te(t)\|_{H^{m-1} (\bR^2)}^2
\,\ud t \right)^{\frac{1}{2}} \\ 
&\qquad \leq C( \|u_0\|_{H^m(\bR^2)}+\|v_0\|_{H^m(\bR^2)}+\|\te_0\|_{H^m(\bR^2)}).
\end{split}
\end{equation}

Moreover, for each positive number $\gamma$, there holds that
\begin{equation}
\label{E44}
(1 + t)^{\frac{\gamma}{2}}\norm{\Lambda^{\gamma} u(t)}_{L^2(\bR^2)}
+ (1 + t)^{\frac{\gamma+1}{2}}\norm{\Lambda^{\gamma} v(t)}_{L^2(\bR^2)}
+ (1 + t)^{\frac{\gamma}{2}}\norm{\Lambda^{\gamma} \theta(t)}_{L^2(\bR^2)}
\le C ,
\end{equation}
where the constant $C$ depends on $\gamma,s,\beta,\lomu,\|\mu'\|_{C^{\gamma+1}(\bR)}$, and $\|(u_0,v_0,\te_0)\|_{H^{\gamma+1}(\bR^2)}$.
\end{theorem}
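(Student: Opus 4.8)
The plan is to establish the three assertions—global existence, the uniform bound \eqref{E15}, and the decay \eqref{E44}—in that order, with the first two resting on a single augmented energy functional and the last on time-weighted refinements of it. Local existence in $H^m$ is standard (energy method with mollification, or a contraction scheme), so the crux is an a priori estimate that a continuity argument can propagate. The obstruction is that the $\theta$-equation carries neither diffusion nor damping: differentiating the system, testing against $\partial^\alpha u,\partial^\alpha v,\partial^\alpha\theta$ and summing over $\abs\alpha\le m$, the diffusion yields $\gtrsim\lomu\norm{\nabla u}_{H^m}^2$ and the $\beta v$ term yields $\beta\norm{v}_{H^m}^2$, but the coupling $\nabla\theta$/$\dv v$ cancels and produces no dissipation for $\theta$. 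Following \cite{MR3886970,MR4106813}, I would synthesize the missing dissipation from the intrinsic structure by adjoining to the energy a small multiple of
\[
\Psi_m:=\sum_{\abs\alpha\le m-1}\int_{\bR^2}\partial^\alpha v\cdot\partial^\alpha\nabla\theta\,\dx ,
\]
forming the equivalent energy $\tfrac12\norm{(u,v,\theta)}_{H^m}^2-\eta\Psi_m$. Using $v_t=\nabla\theta-\beta v-(u\cdot\nabla)v-(v\cdot\nabla)u$ and $\theta_t=\dv v-u\cdot\nabla\theta$, a direct computation gives $\tfrac{d}{dt}\Psi_m=\norm{\nabla\theta}_{H^{m-1}}^2-\norm{\dv v}_{H^{m-1}}^2-\beta\sum_{\abs\alpha\le m-1}\int\partial^\alpha v\cdot\partial^\alpha\nabla\theta\,\dx+(\text{nonlinear})$, so $-\eta\Psi_m$ feeds exactly $+\eta\norm{\nabla\theta}_{H^{m-1}}^2$ into the dissipation. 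Choosing $\eta$ small, the indefinite $\norm{\dv v}_{H^{m-1}}^2\le\norm{v}_{H^m}^2$ is absorbed by the $v$-damping, the $\beta$-cross term is split by Young's inequality against the $v$-damping and half the new $\theta$-dissipation, and $\abs{\Psi_m}\lesssim\norm{v}_{H^m}^2+\norm{\theta}_{H^m}^2$ keeps the functional coercive. The residual rate is precisely $\lambda^2=\tfrac12\min(\lomu,\tfrac{\beta}{4+2\beta^2})$, the factor $\tfrac{\beta}{4+2\beta^2}$ being the leftover $\theta$-rate after the Young splitting.

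\textbf{Nonlinear closure.} The transport and forcing terms $(u\cdot\nabla)u$, $\dv(v\otimes v)$, $(u\cdot\nabla)v$, $(v\cdot\nabla)u$, $u\cdot\nabla\theta$ are treated by Kato--Ponce commutator and Moser product estimates, using in $2D$ the algebra property and the embedding $H^s\hookrightarrow L^\infty$ for $s>1$ (this is where the threshold $s>1$ enters). The temperature-dependent diffusion is expanded as $\dv(\mu(\theta)\nabla u)=\mu(\theta)\Delta u+\mu'(\theta)\nabla\theta\cdot\nabla u$; the top-order part retains the coercive $\int\mu(\theta)\abs{\nabla\partial^\alpha u}^2\ge\lomu\norm{\nabla\partial^\alpha u}^2$, while the commutator $[\partial^\alpha,\mu(\theta)]\Delta u$ and the lower-order piece are bounded by a Moser composition estimate for $\mu\circ\theta$, which is the source of the dependence on $\norm{\mu'}_{C^{m-1}}$. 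All nonlinear contributions are higher order in the small quantity $\norm{(u,v,\theta)}_{H^m}$ relative to the dissipation, so for $\epsilon<\epsilon_1$ they are absorbed; the continuity argument then yields a global solution satisfying \eqref{E15}, and running the estimate for every $m\ge s$ (the low norm already being small) propagates $\cC^\infty$ regularity.

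\textbf{Sharp decay.} The rates mirror the linear structure: $u_t=\dv(\mu\nabla u)$ is heat-like, while eliminating $v$ via $\theta_t=\dv v$ and $v_t=\nabla\theta-\beta v$ gives the damped wave $\theta_{tt}+\beta\theta_t-\Delta\theta=0$, which exhibits the diffusion phenomenon; hence $u,\theta$ inherit parabolic derivative-decay $(1+t)^{-\gamma/2}$, whereas the damped mode $v$ is slaved to its source $\nabla\theta$ and gains an extra half-power. I would implement this by time-weighted energy estimates on the augmented functional, inducting on the differentiation level. For $u$ and $\theta$ the engine is Gagliardo--Nirenberg interpolation: from $\tfrac{d}{dt}\norm{\Lambda^\gamma u}^2+c\norm{\Lambda^{\gamma+1}u}^2\le(\text{small})$ and the uniform bound $\norm{u}_{L^2}\lesssim1$ of \eqref{E15}, the inequality $\norm{\Lambda^{\gamma+1}u}\gtrsim\norm{\Lambda^\gamma u}^{(\gamma+1)/\gamma}\norm{u}_{L^2}^{-1/\gamma}$ turns the dissipation into the algebraic rate $(1+t)^{-\gamma/2}$ (identically for $\theta$, whose dissipation $\norm{\Lambda^{\gamma+1}\theta}^2$ is the one synthesized above); crucially this needs only bounded, not decaying, low-order energy, which is why no $L^1$-type hypothesis is required and the budget stays at $H^{\gamma+1}$. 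For $v$ I would test the $\Lambda^\gamma$-differentiated $v$-equation weighted by $(1+t)^{\gamma+1}$: the genuine damping $2\beta\norm{\Lambda^\gamma v}^2$ yields an exponential Gr\"onwall kernel, and convolving it against the already-established source decay $\norm{\Lambda^{\gamma+1}\theta}^2\lesssim(1+t)^{-(\gamma+1)}$ reproduces the faster rate $(1+t)^{-(\gamma+1)/2}$.

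\textbf{Main obstacle.} I expect two principal difficulties. First, the artificial $\theta$-dissipation has to be generated and exploited \emph{simultaneously} with the temperature-dependent viscosity at arbitrary order $m$: the cross functional $\Psi_m$ interacts with the commutators produced by $\mu(\theta)$, and arranging that every indefinite or borrowed-derivative term ($\norm{\dv v}_{H^{m-1}}^2$, the $\beta$-cross term, the composition commutators) is absorbable at once is delicate and is precisely what pins down $\lambda$. Second, and more seriously, achieving the \emph{sharp} rates—the extra half-power for $v$ for every real $\gamma$ while holding the regularity budget at $H^{\gamma+1}$—forces the induction to be ordered so that the source $\nabla\theta$ in the $v$-equation is always dominated by a $\theta$-rate already secured at the preceding level, and that the heat/damped-wave derivative-smoothing is used to extract decay from merely bounded energy rather than from additional integrability of the data.
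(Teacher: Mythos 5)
Your proposal follows essentially the same route as the paper: the same cross-term augmentation $-\eta\int \Lambda^{m-1}v\cdot\Lambda^{m-1}\nabla\theta$ (plus its zeroth-order counterpart) to synthesize the missing $\|\nabla\theta\|_{H^{m-1}}^2$ dissipation with exactly the absorption bookkeeping that fixes $\lambda$, the same Kato--Ponce/Moser closure for the nonlinearities and for $\mu(\theta)$, the same two-tier structure (close at the low level $s$ first, then propagate to $H^m$ using only smallness of the $H^s$ data), and the same decay mechanism (Gagliardo--Nirenberg interpolation against merely bounded low-order energy for $u,\theta$, then a Gr\"onwall/convolution argument on the damped $v$-equation against the already-established $\theta$-decay for the extra half power). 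I see no gap relative to the paper's argument.
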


\begin{theorem}[Damped case]\label{main_thm:damped} Let $\alpha >0$ and $s>1$. 
There exists a positive number $\ep_2$, depending only on $s, \alpha, \beta, \lomu$, and $\|\mu'\|_{C^{s-1}}$, which satisfies the following: let the initial data $(u_0,v_0,\theta_0)$ belong to $\cC^{\infty}_0 (\mathbb{R}^2)$, and assume that $\dv u_0 =0$. Suppose that $0<\epsilon<\epsilon_2$ and
\begin{equation}\label{E8}
\|u_0\|_{\dot{H}^s\cap \dot{H}^1(\bR^2)}+\|v_0\|_{H^s(\bR^2)}+\|\te_0\|_{H^s(\bR^2)} < \ep.
\end{equation}
Then, the Cauchy problem \eqref{E11} has a unique global solution $(u,v,\te) \in \mathcal{C}^{\infty} ([0,\infty); \cC^{\infty} (\bR^2))$. For each $m \ge s$, there exists a constant $C$ which depends only on $m, s, \al, \beta$, and $\|\mu'\|_{C^{m-1}(\bR)}$ such that
\begin{equation}
\label{E16}
\begin{split}
\sup_{t \in [0,\infty)}
\left(
\|u(t)\|_{H^m (\bR^2)}
+\|v(t)\|_{H^m(\bR^2)}
+\|\te(t)\|_{H^m(\bR^2)} \right)
+
&\lambda
\left( \int_0^{\infty}  
\|u(t)\|_{H^{m+1}(\bR^2)}^2
+ \|v(t)\|_{H^m(\bR^2)}^2
+ \| \nabla \te(t)\|_{H^{m-1} (\bR^2)}^2  
\,\ud t \right)^{\frac{1}{2}} \\ 
&\quad \leq C( 
\|u_0\|_{H^m(\bR^2)}
+ \|v_0\|_{H^m(\bR^2)}
+ \|\te_0\|_{H^m(\bR^2)}).
\end{split}
\end{equation}

Moreover, for each positive number $\gamma$, there holds that
\begin{equation}
\label{E45}
(1 + t)^{\frac{\gamma+4}{2}}\norm{\Lambda^{\gamma} u(t)}_{L^2(\bR^2)}
+ (1 + t)^{\frac{\gamma+1}{2}}\norm{\Lambda^{\gamma} v(t)}_{L^2(\bR^2)}
+ (1 + t)^{\frac{\gamma}{2}}\norm{\Lambda^{\gamma} \theta(t)}_{L^2(\bR^2)}
\le C ,
\end{equation}
where the constant $C$ depends on $\gamma,s,\alpha, \beta, \|\mu\|_{C^{\gamma+1}(\bR)}$, and $\|(u_0,v_0,\te_0)\|_{H^{\gamma+1}(\bR^2)}$.
\end{theorem}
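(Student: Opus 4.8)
The plan is to prove Theorem~\ref{main_thm:damped} in four stages: local well-posedness, a uniform-in-time energy estimate giving \eqref{E16}, continuation to a global smooth solution, and the sharp decay rates \eqref{E45}. The decisive algebraic feature of the damped case is that the diffusion and damping of $u$ together control one full extra derivative: since $\mu\ge\lomu$, one has $\lomu\|\nabla u\|_{H^m}^2+\al\|u\|_{H^m}^2\ge\min(\lomu,\al)\,\|u\|_{H^{m+1}}^2$, which is exactly why the dissipation in \eqref{E16} upgrades the $\|\nabla u\|_{H^m}$ of the undamped case to the stronger $\|u\|_{H^{m+1}}$. Local existence and uniqueness of a smooth solution on a maximal interval $[0,T^*)$ follow from a standard Galerkin/fixed-point scheme: the $u$-equation is uniformly parabolic while the $v$- and $\te$-equations are damped transport equations, so the regularity of the smooth data is propagated, and the solution persists as long as a low-order Sobolev norm stays finite.

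The core is the uniform energy estimate. For $m\ge s$ I differentiate \eqref{E11} by $\Lambda^m$ and test against the respective component. The cross terms $\langle\Lambda^m\nabla\te,\Lambda^m v\rangle$ and $\langle\Lambda^m\dv v,\Lambda^m\te\rangle$ cancel after integrating by parts; the damping yields $\al\|u\|_{H^m}^2+\be\|v\|_{H^m}^2$; the diffusion yields $\int\mu(\te)|\nabla\Lambda^m u|^2\ge\lomu\|\nabla u\|_{H^m}^2$ up to commutators $[\Lambda^m,\mu(\te)]\nabla u$ that, through $\nabla\mu(\te)=\mu'(\te)\nabla\te$, carry a factor $\nabla\te$; and the nonlinear terms $(u\cdot\nabla)u$, $\dv(v\otimes v)$, $(u\cdot\nabla)v$, $(v\cdot\nabla)u$, $u\cdot\nabla\te$ are treated by the Kato--Ponce commutator and product estimates using $\dv u=0$. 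Writing $E_m:=\tfrac12(\|u\|_{H^m}^2+\|v\|_{H^m}^2+\|\te\|_{H^m}^2)$, this produces $\tfrac{d}{dt}E_m+\lomu\|\nabla u\|_{H^m}^2+\al\|u\|_{H^m}^2+\be\|v\|_{H^m}^2\le(\text{cubic nonlinear terms})+(\text{diffusion commutator})$. The nonlinear terms are harmless under smallness, but the diffusion commutator carries $\nabla\te$ and is controlled only by $\|\nabla\te\|_{H^{m-1}}$, while the left-hand side contains \emph{no} $\te$-dissipation to absorb it. This, caused by the complete absence of diffusion and damping for $\te$, is the genuine obstacle.

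To manufacture the missing dissipation I exploit the intrinsic coupling, introducing the interaction functional $\cI_m:=\sum_{|\gamma|\le m-1}\langle\p^\gamma v,\p^\gamma\nabla\te\rangle$. Substituting $v_t=\nabla\te-\be v-(u\cdot\nabla)v-(v\cdot\nabla)u$ and $\te_t=\dv v-u\cdot\nabla\te$ and integrating by parts gives, modulo nonlinear contributions, $\tfrac{d}{dt}\cI_m=\|\nabla\te\|_{H^{m-1}}^2-\|\dv v\|_{H^{m-1}}^2-\be\sum_{|\gamma|\le m-1}\langle\p^\gamma v,\p^\gamma\nabla\te\rangle+\cdots$. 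Forming the modified energy $\cE_m:=E_m-\eta\,\cI_m$ with $\eta=\eta(\be)>0$ small (so that $\cE_m\simeq E_m$ by Cauchy--Schwarz), the contribution $-\eta\tfrac{d}{dt}\cI_m$ supplies the desired dissipation $+\eta\|\nabla\te\|_{H^{m-1}}^2$; the leftover $\eta\|\dv v\|_{H^{m-1}}^2\le\eta\|v\|_{H^m}^2$ and the cross term are absorbed into $\be\|v\|_{H^m}^2$ by Young's inequality, the balancing producing precisely the constant $\tfrac{\be}{4+2\be^2}$ of \eqref{lambda}. Setting $\cD_m:=\|u\|_{H^{m+1}}^2+\|v\|_{H^m}^2+\|\nabla\te\|_{H^{m-1}}^2$, all nonlinear and diffusion-commutator terms are now of cubic form $\lesssim\cE_m^{1/2}\cD_m$, and one arrives at
\[
\frac{d}{dt}\cE_m+2\lambda^2\cD_m\le C\,\cE_m^{1/2}\,\cD_m ,
\]
with $\lambda$ as in \eqref{lambda}. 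A continuity argument at the base level $m=s$ (using the smallness \eqref{E8}) keeps $\cE_s$ small, so the right-hand side is absorbed into the dissipation and \eqref{E16} holds for $m=s$; for $m>s$ the global smallness of the $H^s$-norm linearizes the higher-order inequality, and \eqref{E16} propagates. In particular $T^*=\infty$ and the solution is globally smooth.

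The remaining and hardest part is the sharp decay \eqref{E45}. The guiding structure is that the linearization of the $(v,\te)$-subsystem, $v_t+\be v=\nabla\te$ and $\te_t=\dv v$, is a damped wave equation $\te_{tt}+\be\te_t-\Delta\te=0$ whose low-frequency branch is parabolic with effective diffusivity $\be^{-1}$; this forces the heat-type rates $\|\Lambda^\gamma\te\|_{L^2}\lesssim(1+t)^{-\gamma/2}$ (with $\te$ itself not decaying, consistent with the absence of $\te$-dissipation) and $\|\Lambda^\gamma v\|_{L^2}\lesssim(1+t)^{-(\gamma+1)/2}$, $v$ gaining one power because $v\sim\be^{-1}\nabla\te$ on the slow branch. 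I would establish these by the Fourier-splitting method applied to the nonlinear energy inequalities of the previous stage --- splitting at a time-dependent frequency radius and estimating the low-frequency part via the damped-wave semigroup --- followed by a time-weighted bootstrap over $\gamma$ that feeds the lower-order decay back into the dissipation, using that every occurrence of $\te$ in a forcing term is accompanied by a derivative (the only decaying quantity). The enhanced rate $(1+t)^{-(\gamma+4)/2}$ for $u$ is where the damping is essential: the linear part of $u$ is killed exponentially by $\al$, so the long-time size of $u$ is governed entirely by the quadratic forcing $-\dv(v\otimes v)$; combining a Gagliardo--Nirenberg estimate with the $v$-rates gives $\|\dv(v\otimes v)\|_{L^2}\lesssim(1+t)^{-2}$, and integrating against the exponentially damped flow $e^{-\al(t-\tau)}$ transfers this rate to $u$, producing the two extra powers of $(1+t)$ relative to the undamped case. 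I expect the main difficulty to be closing the $\gamma$-induction uniformly: the non-decaying low-frequency part of $\te$ must be prevented from polluting the faster rates of $u$ and $v$, which requires carefully pairing each $\te$ with a gradient and tracking how the derived $\|\nabla\te\|$-dissipation interacts with the Fourier splitting.
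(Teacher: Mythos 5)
Your stability argument is essentially the paper's. The interaction functional $\cI_m=\sum_{|\gamma|\le m-1}\langle\p^\gamma v,\p^\gamma\nabla\te\rangle$ with the modified energy $\cE_m=E_m-\eta\,\cI_m$ is exactly the cross term the paper builds in \eqref{E31a}--\eqref{E32} (there written with $\Lambda^{s-1}$ and the $s=1$ level), the sign bookkeeping ($-\eta\,\tfrac{d}{dt}\cI_m$ donating $+\eta\|\nabla\te\|_{H^{m-1}}^2$ to the dissipation, the leftovers absorbed into $\beta\|v\|_{H^m}^2$ with the constant $\beta/(4+2\beta^2)$) matches, the inequality $\tfrac{d}{dt}\cE_m+2\lambda^2\cD_m\le C\cE_m^{1/2}\cD_m$ is the paper's Lemma~\ref{L31} (the paper also tracks an $A_s^{s+1}$ term from the Moser estimate \eqref{E70} for $\Lambda^s\mu(\te)$, which you suppress but which does not change the continuity argument), and the propagation to $m>s$ via Gr\"onwall against $\int_0^\infty B_s^2\,\dt\lesssim\ep$ is Proposition~\ref{P33}. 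One small caution: in the damped case the paper replaces the $L^\infty$ bound on $u$ (no longer available since only $\dot H^1\cap\dot H^s$ of $u$ is controlled) by an integration-by-parts cancellation between the two cross-term nonlinearities (the identity \eqref{Power1}); your sketch does not address this, but it is a technical point rather than a structural one.

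Where you genuinely diverge is the decay, and that is also where your proposal has a gap. The paper does \emph{not} use Fourier splitting or the damped-wave semigroup: it works with the purely homogeneous quantity $X_m^2\simeq\|\Lambda^m(u,v,\te)\|_{L^2}^2+\|\Lambda^{m-1}(u,v,\te)\|_{L^2}^2$ (note: no $\|\te\|_{L^2}$), proves $\tfrac{d}{dt}X_m^2+Y_m^2\le C_1(1+t)^{-m}+C_2(1+t)^{-\delta}Y_m^2$, and closes via the interpolation $X_m^{2m/(m-1)}\lesssim Y_m^2$ --- which holds precisely because $\|\Lambda^{m-1}\te\|_{L^2}^2\le\|\Lambda^m\te\|_{L^2}^{2-2/m}\|\te\|_{L^2}^{2/m}$ and $\|\te\|_{L^2}$ is merely \emph{bounded}. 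This superlinear ODE yields $X_m\lesssim(1+t)^{-(m-1)/2}$ with no frequency decomposition at all, and the sharp rates for $v$ and $u$ then follow from direct weighted energy identities on $\|\Lambda^\gamma v\|_{L^2}^2$ and $\|\Lambda^\gamma u\|_{L^2}^2$ using the full damping $\beta$, $\alpha$ and an elementary ODE lemma (Lemma~\ref{Lem:ODE_2}), together with a two-step bootstrap on $\|\nabla u\|_{L^\infty}$. Your plan, by contrast, must confront the fact that the derived dissipation $\|\nabla\te\|_{H^{m-1}}^2$ is one derivative \emph{below} the energy $\|\te\|_{H^m}^2$ and that the low-frequency part of $\hat\te$ does not decay; a Schonbek-type splitting $\|\nabla\te\|^2\ge g(t)^2\|\te\|^2-g(t)^2\int_{|\xi|\le g(t)}|\hat\te|^2$ then leaves a non-decaying low-frequency remainder that you cannot discard. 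You flag exactly this difficulty in your last sentence but do not resolve it, and resolving it in the natural way leads back to the paper's interpolation trick. Likewise, your $u$-estimate via Duhamel against $e^{-\alpha(t-\tau)}$ with forcing $\dv(v\otimes v)$ is the right heuristic (and reproduces $(1+t)^{-(\gamma+4)/2}$), but it needs the base-case rates for $v$ and $\te$ already in hand and a justification that the remaining nonlinear terms $(u\cdot\nabla)u$ and the viscosity commutator do not degrade the rate; the paper handles these with the self-improving bound on $\|\nabla u\|_{L^\infty}$. As written, the decay half of your proposal is a plausible program rather than a proof.
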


\begin{remark}
For the dependence of the quantities, we emphasize that
    \begin{enumerate}
        \item[(i)] The constant $\epsilon_1$ and $\epsilon_2$ does not depend on $m$,
        \item[(ii)] 
        The constant $C$ in \eqref{E15} and \eqref{E16} can be represented by $C=1+\cO(\epsilon)$, where $\cO(\epsilon)$ indicates that there exists a some positive constant $C'=C'(m, s, \alpha, \beta, \|\mu'\|_{C^{m-1}(\bR)})$ such that
        \begin{equation*}
            \lt|\,\cO(\epsilon)\,\rt| \leq C'\ep.
        \end{equation*} Hence, the constant $C$ tends to $1$, as $\epsilon \to 0+$. See the proof of Proposition~\ref{P33} for the details.
\end{enumerate}
\end{remark}

\begin{remark}
Let $m\geq s>1$. If $(u_0, v_0, \theta_0) \in H^m(\mathbb{R}^2)$ satisfies \eqref{E7} in the undamped case (or \eqref{E8} in the damped case), then we can find a global solution 
$(u, v, \theta) \in \mathcal{C}([0,\infty); H^m(\mathbb{R}^2))$ satisfying \eqref{E15} (or \eqref{E16}, respectively). Regarding temporal decay estimates, the range of $\gamma$ for which an estimate of the same form holds is limited to $\gamma \in [0, m-1]$.
\end{remark}

\begin{remark}
\begin{enumerate}
\item
Our Theorem \ref{main_thm:no_damping} for the system \eqref{E11} with $\mu (\theta) = Const. > 0$ and $\alpha = 0$ relaxes the initial data requirement of \cite[Theorem 1.1]{MR3698155}, which assumes smallness of $\norm{u_0}_{H^s (\bR^2) \cap \dot{B}_{2,1}^0 (\bR^2)}
+ \norm{v_0}_{H^s (\bR^2)}
+ \norm{\theta_0}_{H^s (\bR^2)}$ with $s > 3$. 
\item
Our Theorem \ref{main_thm:damped} relax the condition of \cite[Theorem 1.1]{MR3886970}.
\end{enumerate}
\end{remark}

\begin{remark}
The sharpness of the decay rates for $v$ and $\theta$ in Theorem~\ref{main_thm:damped} and Theorem~\ref{main_thm:no_damping} can be shown in a similar manner to that in \cite[Section 7]{MR4581689}.
In the damped case $\al>0$, we further establish improved decay for $u$, including a decay rate for its $L^2$-norm.
\end{remark}

We organize this paper as follows.
In Section \ref{S2}, we introduce several key technical tools that include various forms of the Gagliardo--Nirenberg inequality, the Kato--Ponce commutator estimates, and Moser-type calculus inequalities. 
We also mention the local-in-time wellposedness.
In Section \ref{S3}, we establish the asymptotic stability and the temporal decay estimates of the system \eqref{E11} by assuming sufficiently small initial data in Sobolev spaces.
In Section \ref{S4}, we present the proof of our main results in higher Sobolev spaces just assuming small initial data in lower Sobolev spaces.
In analogy with Section \ref{S3}, the cross term plays the crucial role in the proof of the main results.

\section{Preliminaries}
\label{S2}

For notational convenience, we introduce a few notations.

\begin{itemize}
\item
We denote $A \lesssim B$ if there exists a generic positive constant $C$ such that $|A| \le C B$.
\item
The letter $C$ represents a generic constant, which may change from
line to line.
\item
For smooth $f$, we denote $\Lambda^s f$ by a smooth function satisfying $\widehat{\Lambda^s f}(\xi) = |\xi|^s\hat{f}(\xi)$ for $\xi \in \bR^2$. We shall use the natural $L^p$-extension of $\Lambda^s f$.
\item
For $s > 0$, we define the homogeneous Sobolev space $\dot H^s = \dot H^s (\bR^2)$ by $\norm{f}_{\dot H^s (\bR^2)}^2 
= \norm{\Lam^s f}_{L^2 (\bR^2)}^2$ and the nonhomogeneous Sobolev space $H^s = H^s (\bR^2)$ by $\norm{f}_{H^s (\bR^2)}^2 
= \norm{f}_{L^2 (\bR^2)}^2 
+ \norm{\Lam^s f}_{L^2 (\bR^2)}^2$.
\item
We simply write $\norm{f}_{L^p} = \norm{f}_{L^p ( \bR^2)}$ for $p \in [1,\infty]$ and $\norm{f}_{H^s} = \norm{f}_{H^s ( \bR^2)}$ for $s > 0$.
Similarly, we write
$\norm{f}_{\dot H^{s_1} (\bR^2)
\cap \dot H^{s_2} (\bR^2)}^2 
= \norm{\Lam^{s_1} f}_{L^2 (\bR^2)}^2
+ \norm{\Lam^{s_2} f}_{L^2 (\bR^2)}^2$ for $s_1, s_2 > 0$.
\item  
For vector fields $a,b$, we write $(a \otimes b)$ as $(a_i b_j)_{1 \le i,j \le 2}$.
We denote $A : B = a_{ij} b_{ij}$ for $2 \times 2$ matrices $A = (a_{ij})$ and $B = (b_{ij})$.
Here, we use the Einstein summation convention.
\end{itemize}

\begin{lemma}[Gagliardo--Nirenberg]
\label{L22}
For $1 \le q,r \le \infty$ and $ \gamma,j,m$ satisfying
\[
\frac{1}{p}
= \frac{j}{d} + \gamma \left( \frac{1}{r} - \frac{m}{d} \right) + \frac{1- \gamma}{q}
\]
with
\[
\frac{j}{m} \le \gamma \le 1,
\]
there exists some generic constant $C > 0$ which may depend on $p,q,r,j,m, \gamma$ such that $f \in L^q ( \bR^d)$, $\Lam^j f \in L^p ( \bR^d)$, and $\Lam^m f \in L^r ( \bR^d)$, we have
\begin{equation}
\label{E21}
\norm{ \Lam^j f}_{L^p(\bR^d)}
\le C \norm{ \Lam^m f}_{L^r(\bR^d)}^\gamma
\norm{f}_{L^q(\bR^d)}^{1- \gamma}.
\end{equation}
In particular, by choosing $j = 0$ and $ \gamma = 1$, we can obtain that 
\begin{equation}
\label{E22}
\norm{f}_{L^p(\bR^d)}
\le C \norm{ \Lam^m f}_{L^r(\bR^d)},
\end{equation}
where $p = \frac{dr}{d-rm}$.
\end{lemma}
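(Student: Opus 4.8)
The plan is to establish the interpolation inequality \eqref{E21} by a homogeneous Littlewood--Paley decomposition, reducing it to Bernstein's inequalities on single dyadic shells followed by a one-parameter optimization; by density it suffices to treat Schwartz $f$, and I work throughout in the principal range $q,r\le p$ with $0<\gamma<1$ that covers all uses in this paper.

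First I would fix a Littlewood--Paley decomposition $f=\sum_{k\in\mathbb{Z}}\Delta_k f$, where $\Delta_k$ localizes the Fourier support to the shell $\{|\xi|\sim 2^k\}$, and recall two Bernstein estimates: for $g$ with spectrum in that shell one has $\norm{\Lambda^a g}_{L^\rho}\sim 2^{ka}\norm{g}_{L^\rho}$ for every $a\in\bR$, and $\norm{g}_{L^{\rho_2}}\lesssim 2^{kd(1/\rho_1-1/\rho_2)}\norm{g}_{L^{\rho_1}}$ for $\rho_1\le\rho_2$. Together with the elementary bound $\norm{h}_{L^p}\le\sum_k\norm{\Delta_k h}_{L^p}$ and the uniform boundedness $\norm{\Delta_k f}_{L^\rho}\lesssim\norm{f}_{L^\rho}$, this reduces the problem to estimating each block $\norm{\Lambda^j\Delta_k f}_{L^p}$.

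The crux is that each block admits two bounds whose powers of $2^k$ are governed by a single exponent $\sigma:=m+d(1/q-1/r)$. Raising the integrability from $L^q$ to $L^p$, and separately from $L^r$ to $L^p$ together with $\norm{\Delta_k f}_{L^r}\sim 2^{-km}\norm{\Lambda^m\Delta_k f}_{L^r}$, I obtain
\[
2^{kj}\norm{\Delta_k f}_{L^p}\lesssim 2^{k\gamma\sigma}\norm{f}_{L^q}
\qquad\text{and}\qquad
2^{kj}\norm{\Delta_k f}_{L^p}\lesssim 2^{-k(1-\gamma)\sigma}\norm{\Lambda^m f}_{L^r}.
\]
The key point is that the scaling identity in the hypothesis is exactly what collapses the two a priori distinct shell exponents $j+d(1/q-1/p)$ and $j-m+d(1/r-1/p)$ into $\gamma\sigma$ and $-(1-\gamma)\sigma$; this is a one-line substitution of $j-d/p=\gamma(m-d/r)-(1-\gamma)d/q$, read off from the displayed relation after multiplying through by $d$.

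Finally I would sum $\sum_k 2^{kj}\norm{\Delta_k f}_{L^p}$ by splitting at a threshold $k=N$ and using on each side the bound whose geometric factor is summable there; since $0<\gamma<1$, both series converge to their endpoint terms and give $2^{N\gamma\sigma}\norm{f}_{L^q}+2^{-N(1-\gamma)\sigma}\norm{\Lambda^m f}_{L^r}$. Choosing $N$ so that $2^{N\sigma}\sim\norm{\Lambda^m f}_{L^r}/\norm{f}_{L^q}$ balances the two terms and produces precisely $\norm{\Lambda^m f}_{L^r}^{\gamma}\norm{f}_{L^q}^{1-\gamma}$, which is \eqref{E21}, and the specialization $j=0$, $\gamma=1$ recovers \eqref{E22}. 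The main obstacle is not conceptual but lies in the bookkeeping of admissible exponents: the Bernstein steps require $q,r\le p$, the optimization needs $0<\gamma<1$ and $\sigma\neq 0$, and the hypothesis $j/m\le\gamma\le 1$ is exactly what forces $\gamma m\ge j$ and the convergence of the series. The endpoint cases $\gamma=1$ (the pure Sobolev embedding \eqref{E22}), $\gamma=j/m$, the degenerate $\sigma=0$, and $q$ or $r=\infty$ would be treated separately by duality or by the classical Gagliardo--Nirenberg argument via Riesz potentials; alternatively one may simply invoke the classical statement.
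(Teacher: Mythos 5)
The paper does not prove Lemma~\ref{L22} at all: it is stated as a classical preliminary (with references implicit in the surrounding discussion), and only Lemmas~\ref{L23} and~\ref{L25} are proved in Appendix~\ref{Appen_B}. Your Littlewood--Paley argument is therefore a genuinely different route, namely an actual proof where the paper offers none, and it is essentially correct in the range you treat: the two Bernstein bounds per dyadic block, the verification that the scaling relation collapses the shell exponents to $\gamma\sigma$ and $-(1-\gamma)\sigma$ with $\sigma=m+d(1/q-1/r)$, and the split-and-optimize summation are all sound, and your balancing of $2^{N\sigma}\sim\norm{\Lambda^m f}_{L^r}/\norm{f}_{L^q}$ does yield \eqref{E21}. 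Two remarks. First, your attribution of the series convergence to the hypothesis $j/m\le\gamma$ is slightly off: what the dyadic sums actually require is $0<\gamma<1$ and $\sigma\neq 0$ (either sign of $\sigma$ works if you swap which bound is used on which side of the threshold), whereas $j/m\le\gamma$ is the condition from the classical statement that matters for the exponent ranges you have excluded. Second, the exclusions are not cosmetic for this paper: several of the inequalities actually invoked, e.g.\ \eqref{E14} and \eqref{E22}, are exactly the endpoint $\gamma=1$ (Sobolev/Riesz-potential embedding), where your geometric series $\sum_{k>N}2^{-k(1-\gamma)\sigma}$ diverges and one must fall back on Hardy--Littlewood--Sobolev or a refined (Besov-indexed) summation. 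Since you explicitly defer those cases to the classical literature, the proposal is acceptable as a proof of the interpolation regime and an honest citation for the rest, which is no weaker than what the paper itself does.
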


\begin{remark} In practice, for Lemma~\ref{L22}, we primarily make use of the following forms
\begin{equation}
\begin{gathered}
\label{E14}
\|f\|_{L^4(\bR^2)} \leq C \|\Lambda^{\frac{1}{2}}f\|_{L^2(\bR^2)},
\\
\|\Lam^{s-1}f\|_{L^{\frac{2}{s-1}}(\bR^2)}\leq C \|\nabla f\|_{L^2(\bR^2)}
\quad 
\text{and} 
\quad \|\nabla f\|_{L^{\frac{2}{2-s}}(\bR^2)}\leq C \|\Lam^s f\|_{L^2(\bR^2)}
\quad
\text{for $1 < s < 2$}.
\end{gathered}
\end{equation}
\end{remark}
\begin{lemma}[$L^{\infty}$-bound, $\Lambda^s$-type interpolation.] \label{L23} Let $d$ be a positive integer.
    \begin{enumerate}
        \item[(i)] Let $s_1,s_2$ and $s$ be nonnegative real numbers satisfying $0\leq s_1<s<s_2$.
    Suppose $u \in H^{s_2}(\mathbb{R}^d)$. Then, it holds that
\[
        \|\Lambda^su\|_{L^2(\bR^d)} \leq  \|\Lambda^{s_1}u\|_{L^2(\bR^d)}^{\frac{s_2-s}{s_2-s_1}} \|\Lambda^{s_2}u\|_{L^2(\bR^d)}^{\frac{s-s_1}{s_2-s_1}}.
\]
    \item[(ii)]  
    Let $s\geq0$, and let $s_1,s_2$ be two nonnegative real numbers satisfying $0\leq s_1<\frac{d}{2}$ and $\frac{d}{2}+s<s_2$.
Suppose $u \in H^{s_2}(\mathbb{R}^d)$. Then, there exists a constant $C=C(d,s,s_1,s_2)>0$  such that
\[
        \|\Lambda^s u\|_{L^{\infty}(\bR^d)} \leq C \|\Lambda^{s_1}u\|_{L^2(\bR^d)}^{\frac{s_2-s-\frac{d}{2}}{s_2-s_1}} \|\Lambda^{s_2}u\|_{L^2(\bR^d)}^{\frac{\frac{d}{2}+s-s_1}{s_2-s_1}}.
\]
In particular, there holds
\[
\|u\|_{L^{\infty}(\bR^d)} \leq C \|\Lambda^{s_1}u\|_{L^2(\bR^d)}^{\frac{s_2-\frac{d}{2}}{s_2-s_1}} \|\Lambda^{s_2}u\|_{L^2(\bR^d)}^{\frac{\frac{d}{2}-s_1}{s_2-s_1}}.
\]
    \end{enumerate}
\end{lemma}
    \begin{proof}
We refer to Appendix~\ref{Appen_B}.
    \end{proof}

\begin{lemma}[Kato--Ponce \cite{MR1086966,MR951744,MR3914540}]
\label{L24}
Let $s > 0$ and $1 < p < \infty$.
If $\nabla f \in L^{p_1} ( \bR^d)$, $ \Lam^{s-1} g \in L^{p_2} ( \bR^d)$, $g \in L^{p_3} ( \bR^d)$, and $ \Lam^s f \in L^{p_4} ( \bR^d)$, then
\begin{equation}
\label{E23}
\norm{ \Lam^s (fg)}_{L^p(\bR^d)}
\le C \left( \norm{f}_{L^{p_1}(\bR^d)} 
\norm{ \Lam^{s} g}_{L^{p_2}(\bR^d)}
+ \norm{g}_{L^{p_3}(\bR^d)}
\norm{ \Lam^s f}_{L^{p_4}(\bR^d)} \right),
\end{equation}
\begin{equation}
\label{E24}
\norm{ \Lam^s (fg) - f \Lam^s g}_{L^p(\bR^d)}
\le C \left( \norm{ \nabla f}_{L^{p_1}(\bR^d)} 
\norm{ \Lam^{s-1} g}_{L^{p_2}(\bR^d)}
+ \norm{g}_{L^{p_3}(\bR^d)}
\norm{ \Lam^s f}_{L^{p_4}(\bR^d)} \right),
\end{equation}
where $p_2, p_4 \in (1, \infty)$ and $p_1, p_3 \in [1, \infty]$ such that $\frac{1}{p} = \frac{1}{p_1} + \frac{1}{p_2}
= \frac{1}{p_3} + \frac{1}{p_4}$ and $C > 0$ is a constant depending only on $p_1, p_2, p_3, p_4, s$, and $d$.
\end{lemma}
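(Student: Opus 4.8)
Both inequalities are classical: \eqref{E23} is the fractional Leibniz (product) rule and \eqref{E24} is the Kato--Ponce commutator estimate, and the sharp forms with the stated range of exponents are precisely those of \cite{MR951744,MR1086966,MR3914540}. The plan is to reduce both to a Littlewood--Paley paraproduct analysis. Fixing a dyadic partition of unity, write $\Delta_j$ for the frequency projection onto $\{|\xi| \sim 2^j\}$ and $S_j = \sum_{k < j}\Delta_k$, and decompose the product by Bony's formula $fg = T_f g + T_g f + R(f,g)$, where $T_f g = \sum_k S_{k-2}f\,\Delta_k g$ is the low--high paraproduct, $T_g f$ its symmetric counterpart, and $R(f,g)$ the high--high remainder. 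For \eqref{E23} one applies $\Lam^s$ to each piece: on $T_f g$ the output is localized at frequency $\sim 2^k$, so $\Lam^s$ effectively lands on $\Delta_k g$, and an $\ell^2$ summation combined with Bernstein's and H\"older's inequalities with $\frac1p = \frac1{p_1}+\frac1{p_2}$ produces $\norm{f}_{L^{p_1}}\norm{\Lam^s g}_{L^{p_2}}$; the symmetric piece $T_g f$ produces $\norm{g}_{L^{p_3}}\norm{\Lam^s f}_{L^{p_4}}$, and the remainder $R(f,g)$ is assigned to whichever of the two terms is favourable, the hypothesis $s > 0$ ensuring convergence of the resulting dyadic sums.

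For the commutator \eqref{E24} it is cleaner to expand $\Lam^s(fg) - f\Lam^s g = \sum_{j,k}[\Lam^s,\Delta_j f]\,\Delta_k g$ and to classify the interactions by the relative size of $2^j$ and $2^k$. When $2^k \lesssim 2^j$ (high frequency $f$) the derivative $\Lam^s$ effectively falls on $\Delta_j f$, and the contribution is controlled by $\norm{g}_{L^{p_3}}\norm{\Lam^s f}_{L^{p_4}}$; the comparable high--high interactions $2^j \sim 2^k$ are handled in the same way, the hypothesis $s > 0$ again securing summability. The delicate regime is $2^j \ll 2^k$ (low frequency $f$, high frequency $g$), where the output is localized at frequency $\sim 2^k$ and the operator carries the symbol $|\xi|^s - |\eta|^s$, with $\xi$ the output frequency, $\eta \sim 2^k$ the frequency of $\Delta_k g$, and $|\xi - \eta| \sim 2^j \ll 2^k$ on the spectral support of $S_{k-2}f$. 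A first--order Taylor expansion of $|\cdot|^s$ about $\eta$ rewrites this symbol as $s|\eta|^{s-2}\eta\cdot(\xi-\eta)$ plus a lower-order remainder, so that exactly one derivative is transferred onto $f$ while only $s-1$ derivatives remain on $g$; summing in $j$ and $k$ then yields the gain $\norm{\nabla f}_{L^{p_1}}\norm{\Lam^{s-1}g}_{L^{p_2}}$ in place of the cruder $\norm{f}_{L^{p_1}}\norm{\Lam^s g}_{L^{p_2}}$.

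The principal obstacle lies in this last regime: one must verify that the bilinear operator carrying the symbol $|\xi|^s - |\eta|^s$, once the factor responsible for the $\nabla f$ and the $\Lam^{s-1}g$ is extracted, is a bounded multiplier of Coifman--Meyer type, and then push the whole argument through the endpoint exponents $p_1,p_3 \in \{1,\infty\}$, where $L^p$-boundedness of the individual Littlewood--Paley pieces must be replaced by $L^\infty$ or $L^1$ bounds obtained from Bernstein's inequality. Both points are carried out in full in \cite{MR951744,MR1086966,MR3914540}, to which we refer for the complete range of exponents and the explicit dependence of $C$ on $p_1,p_2,p_3,p_4,s$, and $d$.
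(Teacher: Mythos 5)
The paper does not prove this lemma; it is stated as a classical result with citations to Kenig--Ponce--Vega, Kato--Ponce, and D.~Li, which is where the full argument (in particular the endpoint exponents $p_1,p_3\in\{1,\infty\}$ and the sharp range $s>0$) actually lives. Your sketch is a correct outline of exactly that standard paraproduct proof --- Bony decomposition for \eqref{E23}, and for \eqref{E24} the isolation of the low--high regime where the symbol $|\xi|^s-|\eta|^s$ is Taylor-expanded to transfer one derivative onto $f$ --- and you correctly identify and defer the genuinely technical points (Coifman--Meyer boundedness of the resulting bilinear multiplier and the endpoint cases) to the same references the paper relies on, so there is nothing to fault.
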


We present Moser-type calculus inequality.
\begin{lemma}
\label{L25}
Let $s \ge 1$.
Suppose that $h:\Omega \rightarrow \bR$ is a smooth real-valued function on $\Omega$ with $h(0)=0$. We also assume that $f : \bR^{d} \rightarrow \Omega$ is a continuous function with $\text{Im}(f) \subset \Omega_1$, $\overline{\Omega}_1 \subset \subset \Omega$ and $f \in L^\infty \cap H^s$. We denote the composition of $h$ and $f$ by $h(f)$. Then, the composition $h (f)$ belongs to $H^s (\bR^d)$, and it holds that
\[
\norm{ \Lam^s h (f)}_{L^2(\bR^d)}
\le C
\norm{h'}_{C^{\lceil s-1 \rceil} (\overline{ \Omega_1})} 
(1 + \norm{\nabla f}_{L^2(\bR^d)}^{\lceil s-1 \rceil})
\norm{ \Lam^s f}_{L^2(\bR^d)},
\]
where $C$ is a constant depending only on $s\geq1$ and $d$.
\end{lemma}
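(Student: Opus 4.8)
The plan is to convert the composition into a product by differentiating once, and then to run an induction on $\lceil s \rceil$ using the Kato--Ponce estimate together with Gagliardo--Nirenberg interpolation. First I would record the exact Fourier identity
\[
\norm{\Lam^s h(f)}_{L^2}^2 = \sum_{i=1}^{d}\norm{\Lam^{s-1}\partial_i (h(f))}_{L^2}^2,
\]
which holds because $\sum_i \xi_i^2 = |\xi|^2$, together with the chain rule $\partial_i(h(f)) = h'(f)\,\partial_i f$. Since $f$ takes values in the compact set $\overline{\Omega_1}$ and $h'$ is continuous there, the factor $h'(f)$ is bounded, with $\norm{h'(f)}_{L^\infty}\le \norm{h'}_{C^0(\overline{\Omega_1})}$; this is what lets the leading contribution carry no power of $\nabla f$. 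Thus everything reduces to estimating $\Lam^{s-1}$ of the product $h'(f)\,\partial_i f$, i.e. to a Kato--Ponce-type bound in which the gradient $\partial_i f$ appears explicitly. This is precisely why the final estimate is phrased in terms of $\norm{\nabla f}_{L^2}$ rather than $\norm{f}_{L^\infty}$.

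Next, for $s>1$ I would apply the product estimate \eqref{E23} of Lemma~\ref{L24} with $s$ replaced by $s-1$ and with the two factors taken to be $h'(f)$ and $\partial_i f$, giving
\[
\norm{\Lam^{s-1}(h'(f)\,\partial_i f)}_{L^2}
\lesssim \norm{h'(f)}_{L^\infty}\norm{\Lam^{s}f}_{L^2}
+ \norm{\partial_i f}_{L^{p_3}}\norm{\Lam^{s-1}h'(f)}_{L^{p_4}}.
\]
The first term is the main term and already has the desired form, producing the constant $1$ in $1+\norm{\nabla f}_{L^2}^{\lceil s-1\rceil}$. For the second term I would bound $\norm{\partial_i f}_{L^{p_3}}$ by Gagliardo--Nirenberg (Lemma~\ref{L22}), interpolating $\nabla f$ between $\nabla f\in L^2$ and $\Lam^{s}f\in L^2$, and bound $\norm{\Lam^{s-1}h'(f)}_{L^{p_4}}$ by the inductive hypothesis applied to the function $h'-h'(0)$, which vanishes at $0$ so that the statement applies and whose $C^{\lceil s-2\rceil}$-norm is controlled by $\norm{h'}_{C^{\lceil s-1\rceil}}$. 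Choosing $p_3,p_4$ with $\tfrac1{p_3}+\tfrac1{p_4}=\tfrac12$, each application raises the power of $\norm{\nabla f}_{L^2}$ by one, and summing the partition structure collects $\sum_{k} \norm{\nabla f}_{L^2}^{k-1}\le C(1+\norm{\nabla f}_{L^2}^{\lceil s-1\rceil})$ via $x^{k-1}\le 1+x^{\lceil s-1\rceil}$. The base case $s=1$ is immediate: $\norm{h'(f)\,\partial_i f}_{L^2}\le \norm{h'}_{C^0}\norm{\nabla f}_{L^2}$.

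When $s$ is an integer the scheme can be implemented purely locally: iterating the chain and Leibniz rules yields the Fa\`a di Bruno expansion $\partial^\alpha(h(f)) = \sum c_{k,\beta}\, h^{(k)}(f)\prod_{j=1}^k \partial^{\beta_j} f$ with $\sum_j\beta_j=\alpha$, $|\beta_j|\ge1$, and $1\le k\le |\alpha|=s$, after which each summand is controlled by $\norm{h^{(k)}}_{L^\infty(\overline{\Omega_1})}\le \norm{h'}_{C^{k-1}}$ times a product bound $\norm{\prod_j \Lam^{m_j} f}_{L^2}\lesssim \norm{\nabla f}_{L^2}^{k-1}\norm{\Lam^{s}f}_{L^2}$ that follows from H\"older and Gagliardo--Nirenberg (Lemma~\ref{L22}), choosing $\gamma_j$ with $\sum_j\gamma_j=1$ so that, in the relevant dimension $d=2$, the exponents $\tfrac1{p_j}=\tfrac{m_j-\gamma_j(s-1)}{2}$ automatically satisfy the H\"older constraint $\sum_j \tfrac1{p_j}=\tfrac12$.

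The main obstacle is the non-integer range $s\in(n,n+1)$, $n\ge1$: because $\Lam^{s-1}$ is nonlocal, the chain rule cannot be applied termwise and the clean Fa\`a di Bruno expansion is unavailable. To control $\norm{\Lam^{s-1}h'(f)}_{L^{p_4}}$ I expect to need a genuine fractional chain rule, obtained either from a Littlewood--Paley/Bony paraproduct decomposition or, equivalently, from the Gagliardo seminorm characterization of the top fractional derivative together with the mean value representation $h'(f(x))-h'(f(y))=(f(x)-f(y))\int_0^1 h''\bigl(f(y)+t(f(x)-f(y))\bigr)\,\ud t$; this is exactly what forces up to $\lceil s-1\rceil$ derivatives of $h'$ to appear. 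A secondary technical point is that the induction requires the estimate in $L^{p}$ rather than only $L^2$, so I would either establish the corresponding $L^p$ bound in parallel or route the interpolations through Lemma~\ref{L23}. Keeping the bookkeeping of the exponents and of the accumulating powers of $\norm{\nabla f}_{L^2}$ under control, without circularity in the induction on $\lceil s\rceil$, is the delicate part.
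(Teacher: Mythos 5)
Your overall strategy is the same as the paper's: differentiate once so that $\partial_i(h(f))=h'(f)\,\partial_i f$, put the $L^\infty$ bound on $h'(f)$ into the leading Kato--Ponce term, and handle the commutator-type term $\norm{\Lam^{s-1}h'(f)}_{L^{p_4}}$ by recursion on the order, with the integer case done separately (the paper uses an induction via the chain rule and Gagliardo--Nirenberg where you use Fa\`a di Bruno; these are equivalent in substance). Your integer-order argument and the reduction to $h$ with $h'(0)=0$ (so that $h'(f)\in H^{s-1}$ rather than merely $h'(f)-h'(0)$) both match the paper.

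The one place where your plan is not closed is exactly the point you flag: for non-integer $s=k+\delta$ you anticipate needing either an $L^p$ version of the lemma or a genuine fractional chain rule via paraproducts or Gagliardo seminorms. Neither is necessary, and the paper's resolution is worth internalizing: choose the H\"older exponents in Kato--Ponce to be $p_4=2/\delta$ and $p_3=2/(1-\delta)$, and then apply Gagliardo--Nirenberg (\eqref{E22}) to \emph{raise the differentiation order to an integer while lowering the Lebesgue exponent to $2$}, namely
\[
\norm{\Lam^{k-1+\delta}h'(f)}_{L^{2/\delta}}\lesssim \norm{\Lam^{k}h'(f)}_{L^{2}},
\qquad
\norm{\nabla f}_{L^{2/(1-\delta)}}\lesssim \norm{\Lam^{1+\delta}f}_{L^{2}}.
\]
The first quantity is an integer-order $L^2$ object, controlled by the already-established integer case applied to $h'$ (using $\norm{h''}_{C^{k-1}}\le\norm{h'}_{C^{k}}$), and the second interpolates between $\norm{\nabla f}_{L^2}$ and $\norm{\Lam^s f}_{L^2}$. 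This removes both obstacles you list (no $L^p$ induction, no paraproduct machinery) in one step. Note, however, that this exponent arithmetic is the two-dimensional Sobolev numerology; if you want the statement for general $d$ as claimed, the exponents $p_3,p_4$ must be adjusted accordingly, which your more flexible setup actually accommodates. With that substitution your proof closes and coincides with the paper's.
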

    \begin{proof}
We refer to Appendix~\ref{Appen_B}. See also \cite[Proposition 2.1]{M1984}.
    \end{proof}

\begin{remark} In practice, for Lemma~\ref{L25}, the primary forms we utilize is as follows:
\begin{equation}
\label{E70}
\|\Lambda^s(\mu(\te)-\mu(0))\|_{L^2(\bR^2)} \leq C\norm{\mu'}_{C^{\lceil s-1 \rceil}} 
(1 + \norm{\nabla \te}_{L^2(\bR^2)}^{\lceil s-1 \rceil})
\norm{ \Lam^s \te}_{L^2(\bR^2)} \leq C 
(1 + \norm{\nabla \te}_{L^2(\bR^2)}^{\lceil s-1 \rceil})
\norm{ \Lam^s \te}_{L^2(\bR^2)}.
\end{equation}
On the other hands, one can simply use the chain rule, $\nabla \mu(\te) = \mu'(\te)\nabla \te$, to see that
\begin{equation}
\label{E71}
\|\nabla (\mu(\te)-\mu(0))\|_{L^p(\bR^2)} = \|\nabla \mu(\te)\|_{L^p(\bR^2)} \leq \norm{\mu'}_{L^{\infty}}  
\norm{ \nabla \te}_{L^p(\bR^2)} 
\leq C 
\norm{ \nabla \te}_{L^p(\bR^2)}\quad \text{for all } p \in [1,\infty].
\end{equation}
We shall visit these later.
\end{remark}

Here, we introduce the local existence and uniqueness of strong solutions, which can be proved by standard way. We omit the proof.
\begin{theorem}[Local-in-time existence]
For $m>1$, let $u_0,v_0$, and $\te_0$ be in $H^m(\bR^2)$ with $\dv u_0=0$. Then, there exists $T>0$ such that the system of the tropical climate model \eqref{E11} possesses a unique solution $(u,v,\te)$ in $\mathcal{C}([0,T];H^m(\bR^2))$ subject to initial data $(u_0, v_0, \te_0)$. 
\end{theorem}

\section{Key proposition}
\label{S3}
In this section, we establish the stability and temporal decay estimates in $H^s(\bR^2)$ for $s>1$. To this end, we temporarily assume that the initial data belongs to $H^s(\bR^2)$. 
The following proposition is the main result of this section and serves as a crucial step in the proof of the main theorem.
\begin{proposition}
\label{key_prop}
Let $\al \geq 0$ and $s>1$, and suppose that the initial data $(u_0,v_0,\theta_0)$ belong to $H^s(\mathbb{R}^2)$. We assume that $\dv u_0$=0. Then  there exists a positive number $\varepsilon^*>0$ satisfying the following:
\begin{enumerate}
\item (Undamped case) In case of $\alpha =0$, if there hold $0<\epsilon<\varepsilon^*$ and
\begin{equation}
\label{Assump1}
\|u_0\|_{H^s}+\|v_0\|_{H^s}+\|\te_0\|_{H^s} < \epsilon,
\end{equation}
then the Cauchy problem \eqref{E11} has a unique global solution $(u,v,\te) \in \cC([0,\infty),H^s(\bR^2))$ satisfying
\begin{equation}\label{small:no_damping}
\sup_{t \in [0,\infty)}\left(\|u(t)\|_{H^s}
+\|v(t)\|_{H^s}
+\|\te(t)\|_{H^s} \right)+
\lambda
\left(
\int_0^{\infty} 
\| \nabla u(t)\|_{H^s}^2
+\|v(t)\|_{H^s}^2
+ \|\nabla \te(t)\|_{H^{s-1}}^2 \,\ud t\right)^{\frac{1}{2}} < 2\epsilon.
\end{equation}
    \item (Damped case) In case of $\alpha >0$, if there hold $0<\epsilon<\varepsilon^*$ and 
\begin{equation}
\label{Assump2}
\|u_0\|_{\dot{H}^s\cap\dot{H}^1}+\|v_0\|_{H^s}+\|\te_0\|_{H^s} < \epsilon,
\end{equation}
then the Cauchy problem \eqref{E11} has a unique global solution $(u,v,\te) \in \cC([0,\infty),H^s(\bR^2))$ satisfying
\begin{equation} 
\label{small:damped}
    \sup_{t \in [0,\infty)}\left( \|u(t)\|_{\dot{H}^s\cap\dot{H}^{1}}
+\|v(t)\|_{H^s}
+\|\te(t)\|_{H^s} \right)
+\lambda\left(\int_0^{\infty} 
\|\nabla u(t)\|_{H^s}^2 
+\|v(t)\|_{H^s}^2 
+\|\nabla \te(t)\|_{H^{s-1}}^2\,\ud t \right)^{\frac{1}{2}}  < 2\epsilon.
\end{equation}
\end{enumerate}
Here, $\lambda$ is as defined in \eqref{lambda}.
Let us further assume that $s \in(1,2)$. Then, in both cases, there exists a positive constant $C>0$, depending on $s,\al,\be,\lomu, \|\mu'\|_{C^{s-1}}$, and $\|(u_0,v_0,\te_0)\|_{H^s}$ such that
\begin{equation}
\label{s-1:decay}
\norm{\Lambda^{s-1} u(t)}_{L^2}
+ \norm{\Lambda^{s-1} v(t)}_{L^2}
+ \norm{\Lambda^{s-1} \theta(t)}_{L^2}
\le C (1 + t)^{- \frac{s-1}{2}},\qquad \text{for all }t>0.
\end{equation}
\end{proposition}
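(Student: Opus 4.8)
The plan is to run one modified-energy estimate at the $H^s$ level, bootstrap it across regularities, and then extract decay. Write $U=\Lam^\sigma u$, $V=\Lam^\sigma v$, $\Theta=\Lam^\sigma\te$. Pairing $\Lam^\sigma$ applied to \eqref{E11} with $(U,V,\Theta)$, using $\dv u=0$ to annihilate the transport terms and integrating by parts in the diffusion, produces the dissipation $\lomu\|\nabla U\|_{L^2}^2+\al\|U\|_{L^2}^2+\be\|V\|_{L^2}^2$ together with nonlinear remainders, but \emph{no} control of $\Theta$: the linear couplings $\int\nabla\Theta\cdot V$ and $\int\dv V\,\Theta$ cancel identically. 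To manufacture temperature dissipation I would exploit the structure emphasized in the introduction and append the cross term $-\eta\,\frac{d}{dt}\int V\cdot\nabla\Theta$. A direct computation using the $v$- and $\te$-equations gives $\frac{d}{dt}\int V\cdot\nabla\Theta=\|\nabla\Theta\|_{L^2}^2-\|\dv V\|_{L^2}^2-\be\int V\cdot\nabla\Theta+(\text{nonlinear})$, so $-\eta$ times this supplies the missing $-\eta\|\nabla\Theta\|_{L^2}^2$ at the price of $+\eta\|\dv V\|_{L^2}^2$ and a term dominated by $\frac{\eta}{2}\|\nabla\Theta\|_{L^2}^2+\frac{\eta\be^2}{2}\|V\|_{L^2}^2$.

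The decisive point is that this byproduct is controlled by the $v$-damping: at the bottom level $\|\dv v\|_{L^2}^2\le\|v\|_{H^s}^2$ and at the top level $\|\Lam^{s-1}\dv v\|_{L^2}^2\le\|\Lam^s v\|_{L^2}^2\le\|v\|_{H^s}^2$, both absorbed by the dissipation $\be\|v\|_{H^s}^2$ supplied by $\be v$ through the $L^2$ and $\dot H^s$ estimates. Taking cross terms only at the bottom level ($\int v\cdot\nabla\te$, yielding $\|\nabla\te\|_{L^2}^2$) and the top level ($\int\Lam^{s-1}v\cdot\nabla\Lam^{s-1}\te$, yielding $\|\Lam^s\te\|_{L^2}^2$) and choosing $\eta$ small, the functional
\[
\cE:=\tfrac12\bigl(\|u\|_{H^s}^2+\|v\|_{H^s}^2+\|\te\|_{H^s}^2\bigr)-\eta\Bigl(\int v\cdot\nabla\te+\int\Lam^{s-1}v\cdot\nabla\Lam^{s-1}\te\Bigr)
\]
is comparable to $\|u\|_{H^s}^2+\|v\|_{H^s}^2+\|\te\|_{H^s}^2$ (each cross term is bounded via Young by $\|v\|_{H^{s-1}}^2+\|\te\|_{H^s}^2$) and obeys $\frac{d}{dt}\cE+2\lambda^2\bigl(\|\nabla u\|_{H^s}^2+\|v\|_{H^s}^2+\|\nabla\te\|_{H^{s-1}}^2\bigr)\le(\text{nonlinear})$, with $\lambda$ exactly as in \eqref{lambda}; the denominator $4+2\be^2$ is what emerges from the two Young steps above.

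For the nonlinear terms I would use the Kato--Ponce estimates of Lemma~\ref{L24} to place $\Lam^\sigma$ on a single factor of each transport/stress term, the Gagliardo--Nirenberg inequalities \eqref{E14} and Lemma~\ref{L23} to recast the resulting $L^p$ products as $L^2$-norms at the correct levels, and the Moser estimate \eqref{E70} together with \eqref{E71} for the temperature-dependent viscosity (splitting $\mu(\te)=\mu(0)+(\mu(\te)-\mu(0))$, so that $\mu(0)$ gives the clean diffusion and the remainder is a controllable error). The goal is to dominate every nonlinear contribution by $C\sqrt{\cE}\,\cdot(\text{dissipation})$; under the smallness hypothesis \eqref{Assump1} (resp.\ \eqref{Assump2}) this is absorbed into the left-hand side, and a continuation/bootstrap argument promotes the local solution to a global one satisfying \eqref{small:no_damping} (resp.\ \eqref{small:damped}). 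The only structural change in the damped case is that one closes in the homogeneous class $\dot H^1\cap\dot H^s$ for $u$ — the hypothesis \eqref{Assump2} does not constrain $\|u_0\|_{L^2}$ — with the $\al u$ term supplying the extra low-frequency dissipation.

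Finally, for \eqref{s-1:decay} with $s\in(1,2)$ I would run the same modified energy at the single level $\sigma=s-1$, obtaining $G(t)\approx\|\Lam^{s-1}u\|_{L^2}^2+\|\Lam^{s-1}v\|_{L^2}^2+\|\Lam^{s-1}\te\|_{L^2}^2$ with $G'+c\bigl(\|\Lam^s u\|_{L^2}^2+\|\Lam^{s-1}v\|_{L^2}^2+\|\Lam^s\te\|_{L^2}^2\bigr)\le(\text{nonlinear})$, and then apply the Fourier-splitting method. On $\{|\xi|>\rho\}$ the dissipation dominates $\rho^2G$, while on $\{|\xi|\le\rho\}$ one has $\int_{|\xi|\le\rho}|\xi|^{2(s-1)}(|\hat u|^2+|\hat\te|^2)\,\ud\xi\le\rho^{2(s-1)}\bigl(\|u\|_{L^2}^2+\|\te\|_{L^2}^2\bigr)$, which stays bounded by the global a priori bounds \emph{without any $L^1$ hypothesis}, precisely because $s-1>0$ (the $v$-part needs no correction since its dissipation $\|\Lam^{s-1}v\|_{L^2}^2$ already dominates $G$ directly). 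Choosing $\rho^2=k/(1+t)$ with $k$ large turns this into $G'+\frac{ck}{1+t}G\lesssim(1+t)^{-s}$, and integrating the linear ODE gives $G(t)\lesssim(1+t)^{-(s-1)}$, i.e.\ \eqref{s-1:decay}. I expect the main obstacle to be the temperature-dissipation mechanism itself — verifying that $\|\dv v\|$ is genuinely absorbed by the $\be v$ damping while keeping $\cE$ coercive — together with closing the borderline ($s>1$) nonlinear estimates, especially the commutators produced by $\dv(\mu(\te)\nabla u)$, which are what tie the diffusion-free temperature $\te$ to the top-order $u$-dissipation; in the decay step the analogous difficulty is ensuring the nonlinear terms act only as a lower-order forcing in the Fourier-splitting ODE.
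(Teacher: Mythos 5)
Your stability argument is essentially the paper's: the same two-level energy (the $L^2$, resp.\ $\dot H^{\delta_1}$, block together with the $\dot H^s$ block), the same cross terms $-\eta\int v\cdot\nabla\te$ and $-\eta\int\Lam^{s-1}v\cdot\Lam^{s-1}\nabla\te$ producing the temperature dissipation $\|\nabla\te\|_{H^{s-1}}^2$ at the cost of $\|\dv v\|_{H^{s-1}}^2$, which is absorbed by $\be\|v\|_{H^s}^2$ exactly as you describe, the same constant $\lambda$, and the same Kato--Ponce/Gagliardo--Nirenberg/Moser treatment of the nonlinearities followed by a continuity (bootstrap) argument. That half is correct as outlined.

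The decay step, however, has a genuine gap. You propose to run the modified energy at the single level $\sigma=s-1$ and claim the dissipation $c\bigl(\|\Lam^s u\|_{L^2}^2+\|\Lam^{s-1}v\|_{L^2}^2+\|\Lam^s\te\|_{L^2}^2\bigr)$. But the only mechanism producing $\|\Lam^s\te\|_{L^2}^2$ is the cross term $\int\Lam^{s-1}v\cdot\Lam^{s-1}\nabla\te$, whose time derivative (cf.\ \eqref{E31a}) carries the byproduct $+\|\Lam^{s-1}\dv v\|_{L^2}^2\le\|\Lam^{s}v\|_{L^2}^2$; at the level $\sigma=s-1$ the only available $v$-damping is $\be\|\Lam^{s-1}v\|_{L^2}^2$, which does not control $\|\Lam^s v\|_{L^2}^2$, so the scheme does not close. (Lowering the cross term to $\Lam^{s-2}$ does not help: for $s<2$ this is a negative-order norm that your energy does not control.) This is precisely why the paper's Lemma~\ref{L33} carries the $\dot H^s$ and $\dot H^{s-1}$ blocks simultaneously, so that $\be\|\Lam^s v\|_{L^2}^2$ appears in the dissipation $Y_s$. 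Once you make that correction, your Fourier-splitting argument does go through and is a legitimate alternative to the paper's route: the paper instead interpolates $\|\Lam^{s-1}f\|_{L^2}^2\le\|\Lam^s f\|_{L^2}^{2-2/s}\|f\|_{L^2}^{2/s}$ to obtain $X_s^{2s/(s-1)}\lesssim Y_s^2$ (see \eqref{E34}) and integrates the nonlinear ODE $\frac{\ud}{\dt}X_s^2+KX_s^{2s/(s-1)}\le0$. Both methods exploit only the boundedness of the $L^2$ norms (no $L^1$ hypothesis, as you correctly note) and yield the same rate $(1+t)^{-(s-1)/2}$.
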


These decay rates \eqref{s-1:decay} are not sharp and can be improved (see \eqref{E44} and \eqref{E45}).

\vspace{1.2em}

The proof of this proposition will be given in subsection~\ref{subsec_33}.

\subsection{An a priori estimate for stability}
The goal of this section is to state and prove the Lemma~\ref{L31}, which concerns an a priori estimate. This will be used to establish Proposition~\ref{key_prop}.

We define a number ${\delta_1}$ by
    \begin{equation}
    \label{delta_1}
        {\delta_1} =
        \begin{cases}
        0 & \text{if } \alpha =0,\\
        1 & \text{if } \alpha >0.    
        \end{cases}
    \end{equation}
We introduce the number ${\delta_1}$ to describe the main stability results all at once, regardless of whether the damping parameter $\alpha \geq 0$ vanishes or not.

We multiply $\Lam^{2s} u$, $\Lam^{2s} v$, and $\Lam^{2s} \theta$ on the both sides of $\eqref{E11}_1$-$\eqref{E11}_3$ respectively and integrate over $\mathbb{R}^2$. Then, we do similar process with $\Lam^{2\delta_1} u$, $v$, and $\te$ on $\eqref{E11}_1$-$\eqref{E11}_3$. By summing these two results and using $\lomu \leq \mu$, we have
\begin{equation}\label{pre_H^s_estimate}
\begin{split}
&\frac{1}{2} \frac{\ud}{\dt}
\left( \|\Lambda^s u\|_{L^2}^2
+ \|\Lambda^s v\|_{L^2}^2
+ \|\Lambda^s \te\|_{L^2}^2 
+ \|\Lambda^{\delta_1} u\|_{L^2}^2
+ \| v\|_{L^2}^2
+\| \te \|_{L^2}^2 \right) \\
&\quad + \left(\alpha \|\Lambda^s u\|_{L^2}^2   
+ \lomu\|\Lambda^s \nabla u\|_{L^2}^2 
+ \beta ||\Lambda^s v\|_{L^2}^2 
+ \alpha \|\Lambda^{\delta_1} u\|_{L^2}^2 
+ \lomu\|\Lambda^{\delta_1} \nabla u\|_{L^2}^2
+ \beta \| v \|_{L^2}^2\right) \\
&\leq - \int_{\bR^2} \Lambda^s ((u \cdot \nabla) u) \cdot \Lambda^s u
+ \int_{\bR^2} \Lambda^s (v \otimes  v) : \Lambda^s \nabla u 
- \int_{\bR^2} \Lambda^s ((u \cdot \nabla) v) \cdot \Lambda^s v
- \int_{\bR^2} \Lambda^s ((v \cdot \nabla) u) \cdot \Lambda^s v\\
&\quad - \int_{\bR^2} \Lambda^{ \delta_1} ((u \cdot \nabla) u) \cdot \Lambda^{ \delta_1} u
+ \int_{\bR^2} \Lambda^{ \delta_1} (v \otimes  v) : \Lambda^{ \delta_1}  \nabla u
- \int_{\bR^2} (v \cdot \nabla ) u \cdot v  - \int_{\bR^2} \Lambda^s (u \cdot \nabla \te) \Lambda^s \te \\
&\quad -\int_{\bR^2}\left(\Lam^s (\mu(\te)\nabla u) - \mu(\te)\Lam^s \nabla u \right):\Lam^s \nabla u
-\int_{\bR^2}\left(\Lam^{\delta_1} (\mu(\te)\nabla u) - \mu(\te)\Lam^{\delta_1} \nabla u \right):\Lambda^{\delta_1}\nabla u.
\end{split}
\end{equation}

\vspace{0.8em}

One difficulty in obtaining the stability result is the terms containing $\|\Lambda^s \te\|_{L^2(\bR^2)}$. 
To control these, we consider the \textit{cross term} with respect to $v, \theta$ as follows.

We multiply $\Lam^{2(s-1)} \nabla \theta$ on the second equation of \eqref{E11} and $\Lam^{2(s-1)} \dv v$ on the third equation of \eqref{E11}. Then, we integrate over $\mathbb{R}^2$ to obtain
\begin{align*}
\int_{\bR^2} v_t \Lam^{2(s-1)} \nabla\theta
+ \int_{\bR^2} (u \cdot \nabla)v \cdot\Lam^{2(s-1)} \nabla\theta
+ \int_{\bR^2} (v \cdot \nabla)u \cdot\Lam^{2(s-1)} \nabla\theta
+ \beta \int_{\bR^2} v \cdot\Lam^{2(s-1)} \nabla\theta
= \int_{\bR^2} | \Lam^{s} \theta|^2,& \\
\int_{\bR^2} \Lam^{2(s-1)} \theta_t \dv v
+ \int_{\bR^2} \Lam^{2(s-1)} (u \cdot \nabla \theta) \dv v 
=  \int_{\bR^2} | \Lam^{s-1} \dv v|^2.&
\end{align*}
By subtracting these two equations, we deduce that
\begin{equation}
\label{E31a}
\begin{split}
\int_{\bR^2} | \Lam^s \theta|^2
&- \beta \int_{\bR^2} \Lam^{s-1} v\cdot \Lam^{s-1}\nabla \theta
- \int_{\bR^2} | \Lam^{s-1}\dv v|^2 
- \frac{\ud}{\dt} \int_{\bR^2} \Lam^{s-1} v \cdot \Lam^{s-1}\nabla \theta \\
&=
\int_{\bR^2} \Lam^{s-1} (u \cdot \nabla)v \cdot \Lam^{s-1}\nabla \theta
+ \int_{\bR^2} \Lam^{s-1} (v \cdot \nabla)u \cdot \Lam^{s-1}\nabla \theta  - \int_{\bR^2} \Lam^{s-1} ( u \cdot \nabla \theta) \cdot \Lam^{s-1}\dv v.\\
\end{split}
\end{equation}
By Young's inequality, this leads to
\begin{equation}
\label{E32}
\begin{split}
\frac{1}{2}\|\Lambda^s\te\|_{L^2}^2& -\lt(1+\frac{\be^2}{2}\rt)\|v\|_{H^s}^2 
- \frac{\ud}{\dt} \int_{\bR^2} \Lam^{s-1} v \cdot \Lam^{s-1}\nabla \theta \\
&\leq
\int_{\bR^2} \Lam^{s-1} (u \cdot \nabla)v \cdot \Lam^{s-1}\nabla \theta
+ \int_{\bR^2} \Lam^{s-1} (v \cdot \nabla)u \cdot \Lam^{s-1}\nabla \theta  - \int_{\bR^2} \Lam^{s-1} ( u \cdot \nabla \theta) \cdot \Lam^{s-1}\dv v.\\
\end{split}
\end{equation}

Similarly, we take $s=1$ and apply the integration by parts to obtain 
\begin{equation}
\label{E35}
\begin{split}
\frac{1}{2}\|\nabla\te\|_{L^2}^2& -\lt(1+\frac{\be^2}{2}\rt)\|v\|_{H^1}^2 
- \frac{\ud}{\dt} \int_{\bR^2}  v \cdot \nabla \theta \\
&\leq
\int_{\bR^2}  (u \cdot \nabla)v \cdot \nabla \theta
+ \int_{\bR^2}  (v \cdot \nabla)u \cdot \nabla \theta  - \int_{\bR^2}  ( u \cdot \nabla \theta) \cdot \dv v = 2\int_{\bR^2}  (v \cdot \nabla)u \cdot \nabla \theta .\\
\end{split}
\end{equation}

Finally, we set $\eta$ to satisfy $0<\eta \leq \beta/(4+2\beta^2)$, which is always less than $\frac{1}{4}$.
We multiply \eqref{E32} and \eqref{E35} by a sufficiently small constant $\eta > 0$ and add it to \eqref{pre_H^s_estimate}, which yields
\begin{equation}
\label{E12}
\begin{split}
&\frac{1}{2} \frac{\ud}{\dt}
\left( \|\Lambda^s u\|_{L^2}^2
+ \|\Lambda^s v\|_{L^2}^2
+ \|\Lambda^s \te\|_{L^2}^2 
+ \|\Lambda^{\delta_1} u\|_{L^2}^2
+ \| v\|_{L^2}^2
+\| \te \|_{L^2}^2
-\eta  \int_{\bR^2} \Lam^{s-1} v \cdot \Lam^{s-1}\nabla \theta
-\eta\int_{\bR^2}  v \cdot \nabla \theta
\right) \\
&\quad + \left(\alpha \|\Lambda^s u\|_{L^2}^2
+ \lomu\|\Lambda^s \nabla u\|_{L^2}^2 
+ \frac{\beta}{2} ||\Lambda^s v\|_{L^2}^2 
+ \alpha \|\Lambda^{\delta_1} u\|_{L^2}^2 
+ \lomu\|\Lambda^{\delta_1} \nabla u\|_{L^2}^2
+ \frac{\beta}{2} \| v \|_{L^2}^2\right)
+ \frac{\eta}{2} \|\nabla \te\|_{H^{s-1}}^2 \\
&\leq - \int_{\bR^2} \Lambda^s ((u \cdot \nabla) u) \cdot \Lambda^s u
+ \int_{\bR^2} \Lambda^s (v \otimes  v) : \Lambda^s \nabla u 
- \int_{\bR^2} \Lambda^s ((u \cdot \nabla) v) \cdot \Lambda^s v
- \int_{\bR^2} \Lambda^s ((v \cdot \nabla) u) \cdot \Lambda^s v\\
&\quad - \int_{\bR^2} \Lambda^{ \delta_1} ((u \cdot \nabla) u) \cdot \Lambda^{ \delta_1} u
+ \int_{\bR^2} \Lambda^{ \delta_1} (v \otimes  v) : \Lambda^{ \delta_1}  \nabla u
- \int_{\bR^2} (v \cdot \nabla ) u \cdot v  - \int_{\bR^2} \Lambda^s (u \cdot \nabla \te) \Lambda^s \te \\
&\quad -\int_{\bR^2}\left(\Lam^s (\mu(\te)\nabla u) - \mu(\te)\Lam^s \nabla u \right):\Lam^s \nabla u
-\int_{\bR^2}\left(\Lam^{\delta_1} (\mu(\te)\nabla u) - \mu(\te)\Lam^{\delta_1} \nabla u \right):\Lambda^{\delta_1}\nabla u \\
&\quad +
\eta\int_{\bR^2} \Lam^{s-1} (u \cdot \nabla)v \cdot \Lam^{s-1}\nabla \theta 
+ \eta\int_{\bR^2} \Lam^{s-1} (v \cdot \nabla)u \cdot \Lam^{s-1}\nabla \theta 
- \eta\int_{\bR^2} \Lam^{s-1} ( u \cdot \nabla \theta) \cdot \Lam^{s-1}\dv v \\
& \quad+ 2\eta\int_{\bR^2}  (v \cdot \nabla)u \cdot \nabla \theta\\
&=: \sum_{k=1}^{14}\cI_k.\end{split}
\end{equation}

\begin{lemma}
\label{L31}
Let $\alpha \geq 0$ and $s > 1$. 
Let $\delta_1$ and $\lambda$ as defined in \eqref{delta_1} and in \eqref{lambda}, respectively.
Let $(u,v,\te)$ be a smooth solution of \eqref{E11} on $[0,T] \times \bR^2$. For each time $t>0$, we define $A_s(t)$ and $B_s(t)$ by
\[
A_s(t) := 
\left( 
\|\Lambda^s u(t)\|_{L^2}^2
+\|\Lambda^{\delta_1} u(t)\|_{L^2}^2
+\| v(t)\|^2_{H^s}
+\|\te(t)\|^2_{H^s}
-\eta \int_{\bR^2} \Lam^{s-1} v \cdot \Lam^{s-1}\nabla \theta
-\eta\int_{\bR^2}  v \cdot \nabla \theta
\right)^{\frac{1}{2}} 
\]
and
\begin{equation}
\label{B_s(t)}
B_s(t) := \lambda\left( \|\nabla u(t)\|_{H^s}^2 
+ \| v(t) \|^2_{H^s} + \|\nabla \te(t)\|_{H^{s-1}}^2 \right)^{\frac{1}{2}}.
\end{equation}
Then, there exists a constant $C>0$ which depends on $\al, \be, s,$ and $\lomu$ such that
\begin{equation}
\label{ingredi_direct-2}
\frac{1}{2}\frac{\ud}{\dt}A_s^2 + B_s^2 \leq C\,(A_s + A_s^{s+1})\,B_s^2. 
\end{equation}
\end{lemma}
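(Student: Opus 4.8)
My plan is to treat \eqref{E12} as the starting point and read its left-hand side as $\tfrac12\frac{\ud}{\dt}A_s^2 + D$, where
\[
D := \alpha\|\Lam^s u\|_{L^2}^2 + \lomu\|\Lam^s\nabla u\|_{L^2}^2 + \tfrac{\be}{2}\|\Lam^s v\|_{L^2}^2 + \alpha\|\Lam^{\delta_1}u\|_{L^2}^2 + \lomu\|\Lam^{\delta_1}\nabla u\|_{L^2}^2 + \tfrac{\be}{2}\|v\|_{L^2}^2 + \tfrac{\eta}{2}\|\nabla\te\|_{H^{s-1}}^2,
\]
and its right-hand side as $\sum_{k=1}^{14}\cI_k$. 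The first step is to verify $D \ge B_s^2$, which is precisely what the choice of $\lambda$ in \eqref{lambda} is for: from $\lambda^2 \le \tfrac12\lomu$ the term $\lomu\|\Lam^s\nabla u\|_{L^2}^2 = \lomu\|\Lam^{s+1}u\|_{L^2}^2$ dominates $\lambda^2\|\Lam^{s+1}u\|_{L^2}^2$, while $\lomu\|\Lam^{\delta_1}\nabla u\|_{L^2}^2$ (if $\alpha=0$) or $\alpha\|\Lam^{\delta_1}u\|_{L^2}^2$ (if $\alpha>0$) dominates $\lambda^2\|\nabla u\|_{L^2}^2$; adding these recovers $\lambda^2\|\nabla u\|_{H^s}^2$. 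Similarly $\lambda^2 \le \tfrac{\be}{2}$ handles the $v$-part and, taking $\eta$ at its largest admissible value so that $\lambda^2 \le \tfrac{\eta}{2}$, the $\te$-part. Thus the lemma reduces to showing $\sum_{k=1}^{14}\cI_k \le C(A_s + A_s^{s+1})B_s^2$.

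The organizing principle for the fourteen cubic terms is a bookkeeping on which norms live in $A_s$ and which in $B_s$. First, since the two cross integrals defining $A_s^2$ are small perturbations (each bounded by $\tfrac{\eta}{2}(\|v\|_{H^s}^2+\|\te\|_{H^s}^2)$), $A_s$ is comparable to $(\|\Lam^s u\|_{L^2}^2+\|\Lam^{\delta_1}u\|_{L^2}^2+\|v\|_{H^s}^2+\|\te\|_{H^s}^2)^{1/2}$ and in particular controls $\|\Lam^s u\|_{L^2}$, $\|v\|_{H^s}$ and $\|\te\|_{H^s}$. Second, by Lemma~\ref{L23} every factor entering the $\cI_k$ is controlled by $B_s$: the intermediate quantities $\|\Lam^s u\|_{L^2}$, $\|\Lam^2 u\|_{L^2}$, $\|\Lam^s\te\|_{L^2}$ and the $L^4$/$L^\infty$ norms used below all interpolate between $\|\nabla u\|_{L^2},\|\nabla\te\|_{L^2}$ and the top orders $\|\Lam^{s+1}u\|_{L^2},\|\Lam^s\te\|_{L^2}$, while $\|v\|_{H^s}$ belongs to $B_s$ directly. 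Since each $\cI_k$ is a product of three such factors, the plan is to bound two of them by $B_s$ and exactly one by $A_s$, producing $A_sB_s^2$. For the transport terms $\cI_1,\cI_5,\cI_8$ I would exploit $\dv u=0$ to cancel the symmetric piece and rewrite what is left as a commutator, estimated by Lemma~\ref{L24}; in particular $\cI_5$ vanishes identically when $\alpha=0$. The quadratic-coupling terms $\cI_2,\cI_3,\cI_4,\cI_6,\cI_7$ and the cross terms $\cI_{11}$--$\cI_{14}$ are dispatched by the product and commutator forms of Lemma~\ref{L24} together with \eqref{E14} and $H^s\hookrightarrow L^\infty$ (available since $s>1$), each collapsing to $A_sB_s^2$.

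The hard part will be the viscous commutators $\cI_9$ and $\cI_{10}$, where $\mu(\te)$ is a nonlinear composition. There I would bound $\|\Lam^s(\mu(\te)\nabla u)-\mu(\te)\Lam^s\nabla u\|_{L^2}$ by the Kato--Ponce estimate \eqref{E24}, controlling $\nabla\mu(\te)$ via the chain rule \eqref{E71} and $\Lam^s\mu(\te)$ via the Moser inequality \eqref{E70}. Sending the surviving top-order factor $\|\Lam^s\nabla u\|_{L^2}=\|\Lam^{s+1}u\|_{L^2}$ and one further factor into $B_s$, and the remaining $\te$- or $u$-factor into $A_s$, gives the base contribution $A_sB_s^2$; the new feature is that \eqref{E70} carries the factor $1+\|\nabla\te\|_{L^2}^{\lceil s-1\rceil}$, and since $\|\nabla\te\|_{L^2}\lesssim A_s$ this raises the power to $A_s^{1+\lceil s-1\rceil}$. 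The delicate point, present already in the simpler terms but sharpest here, is to choose the Hölder exponents so that no derivative of order exceeding $s+1$ on $u$ or $s$ on $\te$ is ever required; this is where \eqref{E14} (for $1<s<2$) and the interpolation of Lemma~\ref{L23} (for larger $s$) do the work.

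Collecting everything, each term obeys $\cI_k \le C\,A_s^{p_k}B_s^2$ with $p_k\in[1,s+1]$, the right endpoint bounding $1+\lceil s-1\rceil$ since $\lceil s-1\rceil< s$. Because $x^p \le x + x^{s+1}$ for every $x\ge 0$ and $p\in[1,s+1]$, summing the fourteen estimates gives $\sum_{k=1}^{14}\cI_k \le C(A_s + A_s^{s+1})B_s^2$, which is \eqref{ingredi_direct-2}.
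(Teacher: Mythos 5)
Your overall skeleton matches the paper's proof: start from \eqref{E12}, check that the dissipation on its left-hand side dominates $B_s^2$ (this is exactly Remark~\ref{rmk315} and the reason for the definition of $\lambda$), and bound each cubic term by $A_s^{p}B_s^2$, with the exceptional power $A_s^{s+1}$ coming from the factor $1+\norm{\nabla\te}_{L^2}^{\lceil s-1\rceil}$ in the Moser estimate \eqref{E70} applied to the viscous commutator $\cI_9$. All of that is sound and is essentially what the paper does.

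The genuine gap is your treatment of $\cI_{11}$ and $\cI_{13}$ in the damped case $\alpha>0$. You propose to dispatch them ``by the product and commutator forms of Lemma~\ref{L24} together with \eqref{E14} and $H^s\hookrightarrow L^\infty$''. But when $\alpha>0$ we have $\delta_1=1$, so $A_s$ controls only $\norm{\Lam^s u}_{L^2}$ and $\norm{\nabla u}_{L^2}$: there is no control of $\norm{u}_{L^2}$, hence none of $\norm{u}_{H^s}$, $\norm{u}_{L^\infty}$, or indeed any norm of $u$ of order below $\dot H^1$ (this is forced by the homogeneous hypothesis \eqref{Assump2}, which is why the authors define $A_s$ this way). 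Since $u$ appears \emph{undifferentiated} in $\cI_{11}=\eta\int_{\bR^2}\Lam^{s-1}((u\cdot\nabla)v)\cdot\Lam^{s-1}\nabla\te$ and in $\cI_{13}$, every direct application of \eqref{E23} or \eqref{E24} leaves a stray factor $\norm{u}_{L^\infty}$ (or $\norm{u}_{L^4}\lesssim\norm{\Lam^{1/2}u}_{L^2}$, equally unavailable), so the estimate cannot be closed term by term. The paper's fix is structural rather than analytic: it writes each of $\cI_{11},\cI_{13}$ as a commutator plus a transport piece and shows, after integrating by parts twice and using $\dv u=0$, that the two transport pieces cancel up to $\int_{\bR^2}(\p_i\Lam^{s-1}\te)(\p_ju_i)(\Lam^{s-1}v_j)$, in which $u$ now carries a derivative; see \eqref{Power1}. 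This cancellation \emph{between} $\cI_{11}$ and $\cI_{13}$ — not an estimate of each separately — is the missing idea, and without it your argument only establishes the lemma for $\alpha=0$.
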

\begin{remark}
\label{rmk315}
Since $\eta$ is chosen to be less than $\frac{1}{4}$, we have
\[
       \frac{3}{4}A_s(t)^2 \leq \left( 
\|\Lambda^s u(t)\|_{L^2}^2
+\|\Lambda^{\delta_1} u(t)\|_{L^2}^2
+\| v(t)\|_{H^s}^2
+\|\te(t)\|_{H^s}^2
\right) \leq \frac{5}{4}A_s(t)^2.
\]
Moreover, we have
\[
    B_s(t)^2 \leq \left(\alpha \|\Lambda^s u\|_{L^2}^2
+ \lomu\|\Lambda^s \nabla u\|_{L^2}^2 
+ \frac{\beta}{2} ||\Lambda^s v\|_{L^2}^2 
+ \alpha \|\Lambda^{\delta_1} u\|_{L^2}^2 
+ \lomu\|\Lambda^{\delta_1} \nabla u\|_{L^2}^2
+ \frac{\beta}{2} \| v \|_{L^2}^2\right)
+ \frac{\eta}{2} \|\nabla \te\|_{H^{s-1}}^2,
\]
for the both cases $\alpha>0$ or $\alpha=0$. This simplicity motivates the definition of $\lambda$.
\end{remark}
\begin{proof}[Proof of Lemma~\ref{L31}]
Due to \eqref{E12}, it only remains to bound nonlinear terms $\sum_{k=1}^{13}\cI_k$.
We consider two cases, either $ \alpha = 0$ or $ \alpha > 0$. This simplicity is the motivation for the definition of $\lambda$.

Let us consider the undamped case, i.e., $ \alpha = 0$. In this case, we have $\delta_1 = 0$. By using integration by parts, we can easily obtain
\[
\cI_5 = \int_{\bR^2} (u \cdot \nabla u) \cdot u = 0,\quad \text{and} \quad 
\cI_6 + \cI_7 = \int_{\bR^2} (v \otimes  v) : \nabla u
- \int_{\bR^2} (v \cdot \nabla ) u \cdot v
= 0.
\]
Moreover, it is clear that $\cI_{10}=0$.

The estimates for the rest terms follow from a combination of \eqref{E14}, Lemma~\ref{L23}, Lemma~\ref{L24}, and the divergence-free condition $\dv u = 0$. Thus, we obtain
\begin{align*}
\cI_1 
&=\lt|\, \int_{\bR^2} (\Lambda^s ((u \cdot \nabla) u) -(u\cdot \nabla) \Lambda^s u) \cdot \Lambda^s u \,\rt|
\lesssim
\norm{ \nabla u}_{L^2}
\norm{\Lambda^s u}_{L^4}^2
\lesssim
\norm{ \nabla u}_{L^2}
\norm{\Lambda^{s+\frac{1}{2}} u}_{L^2}^2\\
&\lesssim
\norm{ \nabla u}_{L^2}
(\norm{\Lam^{s+1} u}_{L^2}^{\frac{1}{2}}
\norm{\Lambda^s u}_{L^2}^{\frac{1}{2}})^2
\lesssim
\norm{ \nabla u}_{L^2}
\norm{\Lam^{s+1} u}_{L^2}
\norm{\Lambda^s u}_{L^2} \lesssim A_s B_s B_s, \\
\cI_2
&=\lt|\,\int_{\bR^2} \Lambda^s (v \otimes  v) : \Lambda^s \nabla u \,\rt|
\lesssim \norm{ \Lam^s v}_{L^2}
\norm{v}_{L^\infty}
\norm{ \Lam^{s+1} u}_{L^2}
\lesssim
\norm{ \Lam^s v}_{L^2}
\norm{v}_{H^s}
\norm{ \Lam^{s+1} u}_{L^2}
\lesssim A_s B_s B_s
, \\
\cI_3
&=\lt| \, \int_{\bR^2} (\Lambda^s ((u \cdot \nabla) v) -(u\cdot \nabla) \Lambda^s v) \cdot \Lambda^s v \,\rt|\\
&\lesssim
\norm{ \nabla u}_{L^\infty}
\norm{ \Lam^s v}_{L^2}^2
+
\begin{cases}
 \norm{ \Lam^s u}_{L^\frac{2}{s-1}}
\norm{ \nabla v}_{L^\frac{2}{2-s}}
\norm{ \Lam^s v}_{L^2} &\quad \text{if} \ 1 < s < 2, \\
\norm{ \Lam^s u}_{L^4}
\norm{ \nabla v}_{L^4}
\norm{ \Lam^s v}_{L^2} &\quad \text{if} \ s \ge 2,
\end{cases}
\\
&\lesssim
\|\nabla u\|_{H^s}\|\Lambda^s v\|_{L^2}^2 +
\begin{cases}
\norm{ \Lam^{2} u}_{L^2}
\norm{ \Lam^s v}_{L^2}^2 &\quad \text{if} \ 1 < s < 2, \\
\norm{ \Lam^{s+\frac{1}{2}} u}_{L^2}
\norm{ \Lam^{\frac{3}{2}} v}_{L^2}
\norm{ \Lam^s v}_{L^2} &\quad \text{if} \ s \ge 2,
\end{cases}
\\
& \lesssim
\|\nabla u\|_{H^s}\|v\|_{H^s}\|\Lambda^s v\|_{L^2}
\lesssim B_s A_s B_s,
\\
\cI_4
&= \lt| \, \int_{\bR^2} \Lambda^s ((v \cdot \nabla) u) \cdot \Lambda^s v \,\rt| \\
&\lesssim \norm{ \Lam^s v}_{L^2}^2
\norm{ \nabla u}_{L^\infty}
+ \norm{v}_{L^\infty}
\norm{ \Lam^{s+1} u}_{L^2}
\norm{ \Lam^s v}_{L^2} \\
&\lesssim  \norm{ \Lam^s v}_{L^2}^2
\|\nabla u\|_{H^s}
+ \norm{ v}_{H^s}
\norm{ \Lam^{s+1} u}_{L^2}
\norm{ \Lam^s v}_{L^2}
\lesssim A_s B_s B_s,
\\
\cI_8 &=
\left|\int_{\bR^2} (\Lambda^s (u \cdot \nabla \te)- (u\cdot\nabla) \Lam^s \te)\, \Lambda^s \te  \, \right|\\
&\lesssim \|\nabla u\|_{L^{\infty}}
\|\Lambda^s \te\|^2_{L^2} +
\begin{cases}
    \|\Lambda^s u\|_{L^{\frac{2}{s-1}}}
    \|\nabla \theta\|_{L^{\frac{2}{2-s}}}
    \|\Lambda^s \te\|_{L^2} 
    &\quad\text{if }1 < s < 2, \\
    \|\Lambda^s u\|_{L^4}
    \|\nabla \theta\|_{L^4}
    \|\Lambda^s \te\|_{L^2} 
    & \quad \text{if }s \geq 2, \\   
    \end{cases}\\
&\lesssim  
\|\nabla u\|_{H^s}
\|\Lambda^s \te\|_{L^2}^2 
+\begin{cases}
\norm{ \Lam^{2} u}_{L^2}
\norm{ \Lam^s \te}_{L^2}^2 
&\, \text{ if} \ 1 < s < 2, \\
\norm{ \Lam^{s+\frac{1}{2}} u}_{L^2}
\norm{ \Lam^\frac{3}{2} \te}_{L^2}
\|\Lambda^s \te\|_{L^2} 
&\,\text{ if} \ s \ge 2,
\end{cases}
\\
& \lesssim \|\te\|_{H^s}\|\Lambda^s \te\|_{L^2}\|\nabla u\|_{H^s}
\lesssim  A_s B_s B_s.
\end{align*}
Moreover, by using Lemma~\ref{L24}, \eqref{E70}, and \eqref{E71}, it follows that
\begin{align*}
\cI_{9}
&=\left|\int_{\bR^2} 
 \left(\Lam^s (\mu(\te)\nabla u) - \mu(\te)\Lam^s \nabla u \right):\Lam^s \nabla u \, \right|\\
&=\left|\int_{\bR^2} 
 \left(\Lam^s ((\mu(\te)-\mu(0))\nabla u) - (\mu(\te)-\mu(0))\Lam^s \nabla u \right):\Lam^s \nabla u \, \right| \\ 
&\lesssim
\|\Lambda^s (\mu(\te)-\mu(0))\|_{L^{2}}
\|\nabla u \|_{L^{\infty}}
\|\Lambda^{s+1} u\|_{L^2} 
+ \begin{cases}
\|\Lambda^s u\|_{L^{\frac{2}{s-1}}}\|\nabla \mu(\theta)\|_{L^{\frac{2}{2-s}}}\|\Lambda^{s+1} u\|_{L^2} &\,\text{ if } 1 < s < 2,\\
\|\Lambda^s u\|_{L^4}\|\nabla \mu(\theta)\|_{L^4}\|\Lambda^{s+1} u\|_{L^2} & \, \text{ if }s \geq 2, \\   
\end{cases}\\
&\lesssim (1+\|\nabla \te\|_{L^2}^{\lceil s-1 \rceil})
\|\Lambda^s \te\|_{L^2} \| \nabla u \|_{H^s}\|\Lambda^{s+1} u\|_{L^2} + \| \te\|_{H^s}\|\nabla u\|_{H^s}\|\Lambda^{s+1} u\|_{L^2}\\
&\lesssim (
\|\te\|_{H^s}^{s+1} 
+ \|\te\|_{H^s})\|\nabla u\|_{H^s} 
\|\Lambda^{s+1} u\|_{L^2}
\lesssim 
A_s^{s+1}
B_s^2 
+ A_s B_s^2.
\end{align*}
The last three terms can be estimated by
\begin{align*}
\cI_{11}
=\lt|\, \int_{\bR^2} \Lam^{s-1} (u \cdot \nabla)v \cdot \Lam^{s-1}\nabla \theta \,\rt|
&\lesssim
\norm{u}_{L^\infty}
\norm{ \Lam^s v}_{L^2} 
\norm{ \Lam^s \theta}_{L^2}
+ 
\begin{cases}
\norm{ \Lam^{s-1} u}_{L^\frac{2}{s-1}}
\norm{ \nabla v}_{L^\frac{2}{2-s}}
\norm{ \Lam^s \theta}_{L^2}
&\, \text{if} \ 1 < s < 2, \\
\norm{ \Lam^{s-1} u}_{L^4}
\norm{ \nabla v}_{L^4}
\norm{ \Lam^s \theta}_{L^2}
&\, \text{if} \ s \ge 2,
\end{cases}
\\
&\lesssim
\norm{u}_{H^s}
\norm{ \Lam^{s} v}_{L^2}
\norm{ \Lam^s \theta}_{L^2}
+
\begin{cases}
\norm{ \nabla u}_{L^2}
\norm{ \Lam^s v}_{L^2}
\norm{ \Lam^s \theta}_{L^2}
&\, \text{if} \ 1 < s < 2, \\
\norm{ \Lam^{s-\frac{1}{2}} u}_{L^2}
\norm{ \Lam^{\frac{3}{2}} v}_{L^2}
\norm{ \Lam^s \theta}_{L^2}
&\, \text{if} \ s \ge 2,
\end{cases}
\\
&\lesssim
\norm{u}_{H^s}
\norm{ v}_{H^s}
\norm{ \Lam^s \theta}_{L^2}
\lesssim A_s B_s B_s,
\\
\cI_{12}
=\lt|\,\int_{\bR^2} \Lam^{s-1} (v \cdot \nabla)u \cdot \Lam^{s-1}\nabla \theta  \,\rt|
&\lesssim \lt(\norm{ \Lam^{s-1} v}_{L^2}
\norm{ \nabla u}_{L^\infty}
+ \norm{v}_{L^\infty}
\norm{ \Lam^s u}_{L^2}\rt)
\norm{ \Lam^s \theta}_{L^2} \\
&
\lesssim
\norm{v}_{H^s}
\norm{ \nabla u}_{H^s}
\norm{ \Lam^s \theta}_{L^2}
\lesssim A_s B_s B_s,
\end{align*}
and
\begin{align*}
\cI_{13}
=\lt|\, \int_{\bR^2} \Lam^{s-1} ( u \cdot \nabla \theta) \cdot \Lam^{s-1}\dv v \,\rt|
&\lesssim
\norm{u}_{L^\infty}
\norm{ \Lam^s \theta}_{L^2}
\norm{ \Lam^s v}_{L^2} 
+
\begin{cases}
\norm{ \Lam^{s-1} u}_{L^\frac{2}{s-1}}
\norm{ \nabla \theta}_{L^\frac{2}{2-s}}
\norm{ \Lam^s v}_{L^2}
&\,\text{if} \ 1 < s < 2, \\
\norm{ \Lam^{s-1} u}_{L^4}
\norm{ \nabla \theta}_{L^4}
\norm{ \Lam^s v}_{L^2}
&\, \text{if} \ s \ge 2,
\end{cases}\\
&\lesssim
\norm{u}_{H^s}
\norm{ \Lam^s \theta}_{L^2}
\norm{ \Lam^s v}_{L^2}
+
\begin{cases}
\norm{ \nabla u}_{L^2}
\norm{ \Lam^s \theta}_{L^2}
\norm{ \Lam^s v}_{L^2}
&\, \text{if} \ 1 < s < 2, \\
\norm{ \Lam^{s-\frac{1}{2}} u}_{L^2}
\norm{ \Lam^{\frac{3}{2}} \theta}_{L^2}
\norm{ \Lam^s v}_{L^2}
&\, \text{if} \ s \ge 2,
\end{cases}
\\
&\lesssim
\norm{u}_{H^s}
\norm{ \Lam^s \theta}_{L^2}
\norm{ \Lam^s v}_{L^2}
+
\norm{\theta}_{H^s}
\norm{\nabla u}_{H^s}
\norm{ \Lam^s v}_{L^2}
\lesssim A_s B_s B_s.
\end{align*}
Finally, we have
\[
\cI_{14}= 
\left|\, 2\eta\int_{\bR^2}  (v \cdot \nabla)u \cdot \nabla \theta\, \right|
\lesssim 
\norm{v}_{L^{\infty}}
\norm{ \nabla u}_{L^2}
\norm{ \nabla \te}_{L^2}
\lesssim 
\norm{v}_{H^s}
\norm{ \nabla u}_{L^2}
\norm{ \nabla \te}_{L^2}
\lesssim B_s A_s B_s.
\]

Combining all these estimates, we conclude that all the terms 
$\cI_k$ are bounded by $C(A_sB_s^2 + A_s^{s+1} B_s^2)$, where $C$ is a constant depending only on $\be>0$, $\lomu >0$, and $s>1$. This completes the estimate \eqref{ingredi_direct-2} in the undamped case.

Next, let us consider the damped case, i.e. $\alpha > 0$. In this case, we have $\delta_1 = 1$. For the terms $\cI_1-\cI_4$, $\cI_8$, $\cI_9$, $\cI_{12}$, and $\cI_{14}$, the same estimate as the undamped case can be used.
By using \eqref{E14}, Lemma~\ref{L23}, and $\dv u=0$, we see that
\begin{align*}
\cI_5
&=\lt|\,\int_{\bR^2} (\Lambda ((u \cdot \nabla) u)-(u\cdot \nabla)\Lambda u) \cdot \Lambda u \,\rt|
\\
&\lesssim \norm{ \nabla u}_{L^2}
\norm{ \nabla u}_{L^4}
\norm{ \nabla u}_{L^4}  \lesssim \norm{ \nabla u}_{L^2}
(\norm{ \nabla u}_{L^2}^{\frac{1}{2}}
\norm{ \Lam^2 u}_{L^2}^{\frac{1}{2}})^2 \lesssim \norm{ \nabla u}_{L^2}^2
\norm{ \nabla u}_{H^s} \lesssim A_s B_s B_s, \\
\cI_6
&=\lt|\, \int_{\bR^2} \Lambda (v \otimes  v) : \Lambda  \nabla u
\, \rt|
\lesssim \norm{ \nabla v}_{L^2}
\norm{ v}_{L^\infty}
\norm{ \Lam^2 u}_{L^2}
\lesssim \norm{ \nabla v}_{L^2}
\norm{v}_{H^s} 
\norm{ \nabla u}_{H^s}
\lesssim A_s B_s B_s
, \\
\cI_7
&= \lt|\, \int_{\bR^2} (v \cdot \nabla ) u \cdot v  \,\rt|
\lesssim \norm{v}_{L^\infty}
\norm{ \nabla u}_{L^2}
\norm{v}_{L^2}
\lesssim \norm{v}_{H^s} 
\norm{ \nabla u}_{L^2}
\norm{v}_{L^2}
\lesssim A_s B_s B_s.
\end{align*}
The identity $\nabla \mu(\te) = \mu'(\te)\nabla \te$ yields that~
\[
\cI_{10}
= \left|\int_{\bR^2} 
 \left(\Lam (\mu(\te)\nabla u) - \mu(\te)\Lam  \nabla u \right):\Lam \nabla u \, \right|
\lesssim  
\|\nabla \mu(\te)\|_{L^{2}}
\|\nabla u \|_{L^{\infty}}
\|\Lambda^{2} u\|_{L^2} 
\lesssim  \|\nabla \te\|_{L^2}\|\nabla u\|_{H^s}^2 
\lesssim A_s B_s^2.
\]
Compared to the undamped case, however, the $\cI_{11}$ and $\cI_{13}$ terms require a technically different approach. Since we no longer have an appropriate estimate for $\|u\|_{L^{\infty}}$ in the damped case, we shall use cancellation properties and integration by parts.

We consider the following identities, where we used the Einstein summation convention,
\begin{align*}
\cI_{11}
&= \int_{\bR^2} \left( \Lam^{s-1} (u_i \p_i v_j) 
- u_i \p_i \Lam^{s-1} v_j \right) \p_j \Lam^{s-1} \theta
+ \int_{\bR^2} u_i \p_i \Lam^{s-1} v_j \p_j \Lam^{s-1} \theta, \\
\cI_{13}
&= -\int_{\bR^2} \left( \Lam^{s-1} (u_i \p_i \theta)
- u_i \p_i \Lam^{s-1} \theta \right) \Lam^{s-1} \p_j v_j
- \int_{\bR^2} u_i \p_i \Lam^{s-1} \theta \Lam^{s-1} \p_j v_j.
\end{align*}
Note that one can derive the following identity by using integration by parts twice and $\dv u = 0$ condition of $u$,
\[
- \int_{\bR^2} u_i \p_i \Lam^{s-1} v_j \p_j \Lam^{s-1} \theta
= \int_{\bR^2} u_i \Lam^{s-1} v_j  \p_i \left(\p_j \Lam^{s-1} \theta\right)
= - \int_{\bR^2}  \p_i \Lam^{s-1}\theta \left( \p_j u_i \Lam^{s-1} v_j
+ u_i \Lam^{s-1} \p_j v_j \right).
\]
Therefore, we deduce that
\begin{equation}
\label{Power1}
\begin{split}
\cI_{11} + \cI_{13}
&=  \int_{\bR^2} \left( \Lam^{s-1} (u_i \p_i v_j) 
- u_i \p_i \Lam^{s-1} v_j \right) \p_j \Lam^{s-1} \theta
-  \int_{\bR^2} \left( \Lam^{s-1} (u_i \p_i \theta)
- u_i \p_i \Lam^{s-1} \theta \right) \Lam^{s-1} \p_j v_j \\
&\quad + \int_{\bR^2}  (\p_i \Lam^{s-1} \theta) (\p_j u_i)(\Lam^{s-1} v_j)
=: N_1 + N_2 + N_3.
\end{split}
\end{equation}
This representation makes us compute these terms as follows:
\begin{equation}
\label{E27}
\begin{split}
N_1 
&\lesssim \norm{ \nabla u}_{L^{\infty}}
\norm{ \Lam^{s-1} v}_{L^2}
\norm{ \Lam^s \theta}_{L^2}
+
\begin{cases}
\norm{ \Lam^{s-1} u}_{L^{\frac{2}{s-1}}}
\norm{ \nabla v}_{L^{\frac{2}{2-s}}}
\norm{ \Lam^s \theta}_{L^2} 
&\quad \text{if }1 < s < 2, \\
\norm{ \Lam^{s-1} u}_{L^4}
\norm{ \nabla v}_{L^4}
\norm{ \Lam^s \theta}_{L^2} 
&\quad \text{if } s \geq 2,
\end{cases} \\
& \lesssim 
\norm{ \nabla u}_{H^s}
\norm{ v}_{H^s}
\norm{ \Lam^s \theta}_{L^2}
+
\begin{cases}
\norm{ \nabla u}_{L^2}
\norm{ \Lambda^s v}_{L^2}
\norm{ \Lam^s \theta}_{L^2}
&\quad \text{if }1 < s < 2, 
\\
\norm{ \Lam^{s-\frac{1}{2}} u}_{L^2}
\norm{ \Lambda^{\frac{3}{2}}v}_{L^2}
\norm{ \Lam^s \theta}_{L^2}
&\quad \text{if } s \geq 2,
\end{cases}
\\
& \lesssim
\norm{ \nabla u}_{H^s}
\norm{ v}_{H^s}
\norm{ \Lam^s \theta}_{L^2}.
\end{split}
\end{equation}
A similar computation reveals that
\begin{equation}
\label{E29-210}
\begin{split}
N_2 &\lesssim \norm{ \nabla u}_{L^{\infty}}
\norm{ \Lam^{s-1} \te}_{L^2}
\norm{ \Lam^s v}_{L^2}
+
\begin{cases}
\norm{ \Lam^{s-1} u}_{L^{\frac{2}{s-1}}}
\norm{ \nabla \te}_{L^{\frac{2}{2-s}}}
\norm{ \Lam^s v}_{L^2}
&\quad \text{if }1 < s < 2, 
\\
\norm{ \Lam^{s-1} u}_{L^4}
\norm{ \nabla \te}_{L^4}
\norm{ \Lam^s v}_{L^2}
&\quad \text{if } s \geq 2,
\end{cases} \\
&\lesssim \norm{ \nabla u}_{H^s}
\norm{ \te}_{H^s}
\norm{ \Lam^s v}_{L^2},
\end{split}
\end{equation}
and
\begin{equation}
\label{E211}
    N_3 \lesssim \norm{ \Lam^s \theta}_{L^2}\norm{ \nabla u}_{L^{\infty}}
\norm{ \Lam^{s-1} v}_{L^2}
 \lesssim \norm{ \Lam^s \theta}_{L^2}\norm{ \nabla u}_{H^s}
\norm{ v}_{H^s}.
\end{equation}
We combine \eqref{E27}-\eqref{E211} to conclude that
\[
    \cI_{11}+\cI_{13} \lesssim \|\te\|_{H^s} \|\nabla u \|_{H^s}\|v\|_{H^s}
    \lesssim A_s B_s B_s.
\]
Thus, we have established \eqref{ingredi_direct-2} for the damped case too, and the proof is complete.
\end{proof}

\subsection{Temporal decay estimates}\label{sec_3.2}
Next, we establish a lemma concerning the temporal decay estimates of $\dot{H}^{s-1}$ norm.
To simplify the computation, we assume that $s \in(1,2)$ in this subsection. In addition, we assume that the solutions are smooth and establish an a priori estimate as in the previous section.
We consider the temperature $\theta$ in the absence of thermal diffusion,
and thus the damping term $\norm{\Lam^s \theta}_{L^2}^2$ is absent from the energy estimates.
To handle this difficulty, we employ the operators $\Lambda^{2s}$ and $\Lambda^{2(s-1)}$, along with the cross term.

Let us multiply equations \eqref{E11}$_1$–\eqref{E11}$_3$ by $\Lam^{2s} u$, $\Lam^{2s} v$, and $\Lam^{2s} \theta$, respectively and integrate over $\mathbb{R}^2$. We then perform a similar procedure using $\Lam^{2(s-1)} u$, $\Lam^{2(s-1)}v$, and $\Lam^{2(s-1)}\te$. Summing the resulting identities and using the bound $\lomu \leq \mu$, we obtain
\begin{align*}
&\frac{1}{2} \frac{\ud}{\dt}
\left( \|\Lambda^s u\|_{L^2}^2
+ \|\Lambda^s v\|_{L^2}^2
+ \|\Lambda^s \te\|_{L^2}^2 
+ \|\Lambda^{s-1} u\|_{L^2}^2
+ \|\Lambda^{s-1} v\|_{L^2}^2
+\|\Lambda^{s-1} \te \|_{L^2}^2 \right) \\
&\quad + \left(\alpha \|\Lambda^s u\|_{L^2}^2
+ \lomu\|\Lambda^s \nabla u\|_{L^2}^2 
+ \beta ||\Lambda^s v\|_{L^2}^2 
+ \alpha \|\Lambda^{s-1} u\|_{L^2}^2 
+ \lomu\|\Lambda^{s-1} \nabla u\|_{L^2}^2
+ \beta \|\Lambda^{s-1} v \|_{L^2}^2 \right) \\
&\leq - \int_{\bR^2} \Lambda^s ((u \cdot \nabla) u) \cdot \Lambda^s u
+ \int_{\bR^2} \Lambda^s (v \otimes  v) : \Lambda^s \nabla u 
- \int_{\bR^2} \Lambda^s ((u \cdot \nabla) v) \cdot \Lambda^s v
- \int_{\bR^2} \Lambda^s ((v \cdot \nabla) u) \cdot \Lambda^s v\\
&\quad - \int_{\bR^2} \Lambda^{s-1} ((u \cdot \nabla) u) \cdot \Lambda^{s-1} u
+ \int_{\bR^2} \Lambda^{s-1} (v \otimes  v) : \Lambda^{s-1} \nabla u 
- \int_{\bR^2} \Lambda^{s-1} ((u \cdot \nabla) v) \cdot \Lambda^{s-1} v\\
&
\quad - \int_{\bR^2} \Lambda^{s-1} ((v \cdot \nabla) u) \cdot \Lambda^{s-1} v  - \int_{\bR^2} \Lambda^s (u \cdot \nabla \te) \Lambda^s \te -\int_{\bR^2}\left(\Lam^s (\mu(\te)\nabla u) - \mu(\te)\Lam^s \nabla u \right):\Lam^s \nabla u\\
&
\quad - \int_{\bR^2} \Lambda^{s-1} (u \cdot \nabla \te) \Lambda^{s-1} \te -\int_{\bR^2}\left(\Lam^{s-1} (\mu(\te)\nabla u) - \mu(\te)\Lam^{s-1} \nabla u \right):\Lam^{s-1} \nabla u.\\
\end{align*}
To address a similar difficulty as before, we again introduce a cross term. Using the previously derived identity \eqref{E31a} together with Young’s inequality, we obtain the following estimate
\begin{align*}
\frac{1}{2}\int_{\bR^2} | \Lam^s \theta|^2
& 
 - \frac{\beta^2}{2} \int_{\bR^2} |\Lam^{s-1} v|^2
 - \int_{\bR^2} | \Lam^{s-1}\dv v|^2 -\frac{\ud}{\dt} \int_{\bR^2} \Lam^{s-1} v \cdot \Lam^{s-1}\nabla \theta \\
 & 
\leq \int_{\bR^2} \Lam^{s-1} (u \cdot \nabla)v \cdot \Lam^{s-1}\nabla \theta
+ \int_{\bR^2} \Lam^{s-1} (v \cdot \nabla)u \cdot \Lam^{s-1}\nabla \theta 
- \int_{\bR^2} \Lam^{s-1} ( u \cdot \nabla \theta) \cdot \Lam^{s-1}\dv v. 
\end{align*}
We then multiply sufficiently small constant $\kappa>0$ to \eqref{E31a} and add to \eqref{pre_H^s_estimate} to obtain
\begin{equation}
\label{H^s_temp}
\begin{split}
&\frac{1}{2} \frac{\ud}{\dt}
\left( \|\Lambda^s u\|_{L^2}^2
+ \|\Lambda^s v\|_{L^2}^2
+ \|\Lambda^s \te\|_{L^2}^2 
+ \|\Lambda^{s-1} u\|_{L^2}^2
+ \|\Lambda^{s-1} v\|_{L^2}^2
+\|\Lambda^{s-1} \te \|_{L^2}^2
-\kappa \int_{\bR^2}\Lambda^{s-1}v \cdot \Lambda^{s-1}\nabla \te
\,\right) \\
&\quad + \left(\alpha \|\Lambda^s u\|_{L^2}^2
+ \lomu\|\Lambda^s \nabla u\|_{L^2}^2 
+ \frac{\beta}{2} ||\Lambda^s v\|_{L^2}^2 
+ \alpha \|\Lambda^{s-1} u\|_{L^2}^2 
+ \lomu\|\Lambda^{s-1} \nabla u\|_{L^2}^2
+ \beta \|\Lambda^{s-1} v \|_{L^2}^2\right)
+ \frac{\kappa}{2}\|\Lambda^s \te\|_{L^2}^2
\\
&\leq - \int_{\bR^2} \Lambda^s ((u \cdot \nabla) u) \cdot \Lambda^s u
+ \int_{\bR^2} \Lambda^s (v \otimes  v) : \Lambda^s \nabla u 
- \int_{\bR^2} \Lambda^s ((u \cdot \nabla) v) \cdot \Lambda^s v
- \int_{\bR^2} \Lambda^s ((v \cdot \nabla) u) \cdot \Lambda^s v\\
&\quad - \int_{\bR^2} \Lambda^{s-1} ((u \cdot \nabla) u) \cdot \Lambda^{s-1} u
+ \int_{\bR^2} \Lambda^{s-1} (v \otimes  v) : \Lambda^{s-1} \nabla u 
- \int_{\bR^2} \Lambda^{s-1} ((u \cdot \nabla) v) \cdot \Lambda^{s-1} v\\
&
\quad - \int_{\bR^2} \Lambda^{s-1} ((v \cdot \nabla) u) \cdot \Lambda^{s-1} v  - \int_{\bR^2} \Lambda^s (u \cdot \nabla \te) \Lambda^s \te -\int_{\bR^2}\left(\Lam^s (\mu(\te)\nabla u) - \mu(\te)\Lam^s \nabla u \right):\Lam^s \nabla u\\
&
\quad - \int_{\bR^2} \Lambda^{s-1} (u \cdot \nabla \te) \Lambda^{s-1} \te -\int_{\bR^2}\left(\Lam^{s-1} (\mu(\te)\nabla u) - \mu(\te)\Lam^{s-1} \nabla u \right):\Lam^{s-1} \nabla u\\
&\quad+\kappa \int_{\bR^2} \Lam^{s-1} (u \cdot \nabla)v \cdot \Lam^{s-1}\nabla \theta
+\kappa \int_{\bR^2} \Lam^{s-1} (v \cdot \nabla)u \cdot \Lam^{s-1}\nabla \theta 
-\kappa \int_{\bR^2} \Lam^{s-1} ( u \cdot \nabla \theta) \cdot \Lam^{s-1}\dv v\\
&=: \sum_{k=1}^{15}\cI_k,
\end{split}
\end{equation}
where we chose $\kappa$ to satisfy $0<\kappa \leq \min(\frac{\beta}{2},\frac{1}{\beta}) \leq \frac{1}{\sqrt{2}}$. 

We may now state the following lemma. Recall the number $\delta_1$ in \eqref{delta_1}.
\begin{lemma}
 \label{L33}
Let $\alpha \geq 0$ and $s \in(1,2)$. Let $(u,v,\te)$ be a smooth solution of \eqref{E11} on $[0,\infty) \times \bR^2 $.  
For each time $t>0$, we define $X_s(t)$ and $Y_s(t)$ by
\begin{equation}
\label{X_s(t)}
X_s(t) := \left( \|\Lambda^s u\|_{L^2}^2
+ \|\Lambda^s v\|_{L^2}^2
+ \|\Lambda^s \te\|_{L^2}^2 
+ \|\Lambda^{s-1} u\|_{L^2}^2
+ \|\Lambda^{s-1} v\|_{L^2}^2
+\|\Lambda^{s-1} \te \|_{L^2}^2 
-\kappa \int_{\bR^2}\Lambda^{s-1}v \cdot \Lambda^{s-1}\nabla \te
\,\right)^{\frac{1}{2}}(t) 
\end{equation}
and
\begin{equation}
\label{Y_s(t)}
Y_s(t) := \left(
\|\Lambda^{s+1} u\|_{L^2}^2
+ \|\Lambda^{s}  u\|_{L^2}^2
+ \al\|\Lambda^{s-1} u\|_{L^2}^2 
+ \|\Lambda^s v\|_{L^2}^2 
+ \|\Lambda^{s-1} v \|_{L^2}^2
+ \|\Lambda^s \te\|_{L^2}^2
\right)^{\frac{1}{2}}(t). 
\end{equation}
There exists a constant $\epsilon_3>0$ such that the following a priori estimate holds: if $0<\ep<\ep_3$ and
\begin{equation}
\label{E54}
\|\Lambda^su(t)\|_{L^2}
+\|\Lambda^{\delta_1}u(t)\|_{L^2}
+\|v(t)\|_{H^s}
+\|\te(t)\|_{H^s} \leq 2\epsilon \quad \text{ for all } t\geq0, 
\end{equation}
then we have
\begin{equation}
\label{E33}
\frac{\ud}{\dt}\left(X_s(t)\right)^2 + \left(Y_s(t)\right)^2 \leq 0 \quad \text{for all }t\geq0.
\end{equation}
\end{lemma}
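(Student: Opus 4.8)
The plan is to reproduce the energy–absorption scheme of Lemma~\ref{L31}, now starting from \eqref{H^s_temp} and tracking $(X_s,Y_s)$ instead of $(A_s,B_s)$. First I would read off the left-hand side of \eqref{H^s_temp}: modulo the cross term it equals $\tfrac12\frac{\ud}{\ud t}X_s^2$ plus the dissipation bracket together with $\tfrac{\kappa}{2}\|\Lam^s\te\|_{L^2}^2$. Using $\|\Lam^s\nabla u\|_{L^2}=\|\Lam^{s+1}u\|_{L^2}$ and $\|\Lam^{s-1}\nabla u\|_{L^2}=\|\Lam^s u\|_{L^2}$, each of the six summands of $Y_s^2$ is dominated by a summand of this bracket, so the dissipation is bounded below by $c_0 Y_s^2$ with $c_0=\min(\lomu,\tfrac{\be}{2},\tfrac{\kappa}{2})>0$. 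Hence everything reduces to showing $\sum_{k=1}^{15}\cI_k\le C\ep\,Y_s^2$ under \eqref{E54}; choosing $\ep_3=c_0/(2C)$ then gives $\tfrac12\frac{\ud}{\ud t}X_s^2+\tfrac12\,(\text{dissipation})\le0$, which is \eqref{E33} up to the fixed constant $c_0$.

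The key structural feature, and the source of the difficulty relative to Lemma~\ref{L31}, is that $Y_s$ is built purely from the homogeneous norms $\Lam^{s-1},\Lam^{s},\Lam^{s+1}$; that is, it controls only the high-frequency part of the dissipation and contains no $L^2$ mass of $u$, $v$ or $\te$. Consequently, in every trilinear term $\cI_k$ I must extract \emph{exactly one} factor and bound it by $2\ep$ through \eqref{E54} (using $\|\Lam^s u\|_{L^2},\|\Lam^{\delta_1}u\|_{L^2},\|v\|_{H^s},\|\te\|_{H^s}\le2\ep$), while placing the two remaining, top-order factors — drawn from $\|\Lam^{s+1}u\|_{L^2},\|\Lam^s v\|_{L^2},\|\Lam^s\te\|_{L^2}$ — inside $Y_s$. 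Since $s\in(1,2)$ we have $\lceil s-1\rceil=1$, so the viscosity commutators $\cI_{10},\cI_{12}$ are handled by \eqref{E70}–\eqref{E71} and yield the small factor $\|\Lam^s\te\|_{L^2}$ or $\|\nabla\te\|_{L^2}$; all remaining advection and stress terms are treated exactly as the $\cI_k$ of Lemma~\ref{L31}, combining Kato–Ponce \eqref{E23}–\eqref{E24}, the interpolations of Lemma~\ref{L23}, the forms \eqref{E14}, and $\dv u=0$. Each such term is thereby bounded by $C\ep\,Y_s^2$.

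The main obstacle is, as in Lemma~\ref{L31}, the advection by $u$, especially the $\kappa$-cross terms $\cI_{13},\cI_{15}$ and the transport terms $\cI_1,\cI_5,\cI_9,\cI_{11}$: in the damped case neither $\|u\|_{L^\infty}$ nor $\|\nabla u\|_{L^\infty}$ is controlled by the quantities in \eqref{E54}, since $\dot H^1\cap\dot H^s(\bR^2)$ with $1<s<2$ does not embed into $W^{1,\infty}$. For the pure transport terms I would use $\int u_i\p_i\Lam^{\sigma}(\cdot)\,\Lam^{\sigma}(\cdot)=0$ after integrating by parts with $\dv u=0$ (for $\sigma=s,\,s-1$), reducing each to a Kato–Ponce commutator whose top order falls on $v$ or $\te$ rather than on $u$. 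For $\cI_{13}+\cI_{15}$ I would reproduce the cancellation \eqref{Power1}: integrating by parts twice removes the worst piece $-\int u_i\p_i\Lam^{s-1}v_j\,\p_j\Lam^{s-1}\te$ and leaves the commutators $N_1,N_2$ together with the benign $N_3$, which are estimated exactly as in \eqref{E27}–\eqref{E211} by $\|\nabla u\|_{H^s}\|v\|_{H^s}\|\Lam^s\te\|_{L^2}$, whence $\lesssim\ep\,Y_s^2$. Collecting the estimates and invoking the already-fixed $\kappa$ (satisfying $0<\kappa\le\min(\tfrac{\be}{2},\tfrac1\be)$) yields $\sum_k\cI_k\le C\ep\,Y_s^2$ and hence \eqref{E33}. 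I expect the delicate point to be verifying that in the damped case the cancellation still closes with only $\Lam^{s-1}$ derivatives on the low copy, so that after the commutator is taken the surviving factors genuinely land in $Y_s$ while the extracted small factor ($\|v\|_{H^s}$ or $\|\te\|_{H^s}$) remains available.
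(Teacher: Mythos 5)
Your overall strategy coincides with the paper's: start from \eqref{H^s_temp}, bound the dissipation below by a multiple of $Y_s^2$, show $\sum_k\cI_k\lesssim\ep\,Y_s^2$ using Kato--Ponce, the interpolations of Lemma~\ref{L23}, $\dv u=0$, and the cancellation \eqref{Power1} for the $\kappa$-cross terms, and absorb. You also correctly isolate the real difficulty, namely that $Y_s$ carries no $L^2$ mass, so every trilinear term must end as (one factor $\le2\ep$ from \eqref{E54}) times (two factors each of which is literally one of the six norms in $Y_s$).

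However, at the one place where this principle is actually under strain you abandon it: you bound $N_1,N_2,N_3$ ``exactly as in \eqref{E27}--\eqref{E211} by $\|\nabla u\|_{H^s}\|v\|_{H^s}\|\Lam^s\te\|_{L^2}$, whence $\lesssim\ep\,Y_s^2$.'' That last implication is false. The inhomogeneous norms $\|\nabla u\|_{H^s}$ and $\|v\|_{H^s}$ contain $\|\nabla u\|_{L^2}$ and $\|v\|_{L^2}$, neither of which sits in $Y_s$ (in the undamped case $\|\nabla u\|_{L^2}$ cannot even be interpolated into $Y_s$, since $\al\|\Lam^{s-1}u\|_{L^2}^2$ vanishes), so this bound contains a contribution of size $\ep^2\|\Lam^s\te\|_{L^2}$, which is linear in $Y_s$ and cannot be absorbed. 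The fix is exactly what the paper does in \eqref{E47a} and the line after it: keep the sharper intermediate forms so that, e.g., $N_1\lesssim(\|\nabla u\|_{L^2}+\|\Lam^{s+1}u\|_{L^2})\|\Lam^{s-1}v\|_{L^2}\|\Lam^s\te\|_{L^2}$, and use that $\|\Lam^{s-1}v\|_{L^2}$ is \emph{simultaneously} a $Y_s$-factor and $\le2\ep$, so each resulting product has two genuine $Y_s$-factors. The same caution applies to $\cI_5$ and $\cI_{11}$ (the $\Lam^{s-1}$-level transport terms), where the paper must split cases on $\al$: for $\al>0$ one lands on $\|\Lam^s u\|_{L^2}\|\Lam^{s-1}u\|_{L^2}$, both in $Y_s$, while for $\al=0$ one must interpolate all the way down to $\|u\|_{L^2}\le2\ep$ so as to end with $\ep\|\Lam^s u\|_{L^2}^2$; your blanket appeal to Lemma~\ref{L31}'s estimates misses this case distinction. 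These are repairable within your own framework, but as written the crucial absorption step does not close.
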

\begin{remark}
Notice that
\[
        X_s(t) \simeq \left( \|\Lambda^s u\|_{L^2}^2
+ \|\Lambda^s v\|_{L^2}^2
+ \|\Lambda^s \te\|_{L^2}^2 
+ \|\Lambda^{s-1} u\|_{L^2}^2
+ \|\Lambda^{s-1} v\|_{L^2}^2
+\|\Lambda^{s-1} \te \|_{L^2}^2 
\,\right)^{\frac{1}{2}}(t) 
\]
and
\[
        Y_s(t) \simeq \left(\alpha \|\Lambda^s u\|_{L^2}^2
+ \lomu\|\Lambda^{s+1} u\|_{L^2}^2 
+ \frac{\beta}{2} ||\Lambda^s v\|_{L^2}^2 
+ \alpha \|\Lambda^{s-1} u\|_{L^2}^2 
+ \lomu\|\Lambda^{s} u\|_{L^2}^2
+ \frac{\beta}{2} \|\Lambda^{s-1} v \|_{L^2}^2
+ \frac{\kappa}{2} \|\Lambda^s \te\|_{L^2}^2
\right)^{\frac{1}{2}}(t). 
\]
\end{remark}
\begin{proof}
We shall use the smallness assumed in \eqref{E54}, so that we show that 
\[
    \sum_{k=1}^{12}\cI_k \leq \epsilon C(Y_s(t))^2.
\]
In particular, \eqref{E54} yields that
\[
    \|\nabla u\|_{L^2},\|\nabla v\|_{L^2},\|\nabla \theta \|_{L^2}, \|v\|_{L^{\infty}} \lesssim \epsilon,
\]
and we shall frequently use these estimates.
To see this, we use \eqref{E14}, Lemma~\ref{L23}, and $\dv u=0$ to obtain
\begin{align*}
\cI_1
&=
\left| \int (\Lambda^s (u_i \partial_i u_j) - u_i \Lambda^s \partial_i u_j )\Lambda^s u_j\,\right|\\
&
\lesssim 
\norm{\nabla u}_{L^2}
\norm{\Lambda^s u}_{L^4}^2 
\lesssim \norm{\nabla u}_{L^2}
\norm{\Lambda^{s + \frac{1}{2}} u}_{L^2}^2 
\lesssim
\norm{\nabla u}_{L^2}
\norm{\Lambda^{s + 1} u}_{L^2}
\norm{\Lambda^s u}_{L^2}
\lesssim
\epsilon 
\norm{\Lambda^{s + 1} u}_{L^2}
\norm{\Lambda^s u}_{L^2}, \\
\cI_2
&= \lt|\int \Lambda^s (v_j v_i) \p_i \Lambda^s u_j\, \rt|
\lesssim \norm{\Lambda^s v}_{L^2}
\norm{v}_{L^\infty}
\norm{\Lambda^{s+1} u}_{L^2} \lesssim \epsilon \norm{\Lambda^s v}_{L^2}
\norm{\Lambda^{s+1} u}_{L^2},\\
\cI_3
&= \lt| \int (\Lambda^s (u_i \p_i v_j) - u_i \Lambda^s \p_i v_j) \Lambda^s v_j \, \rt| 
\lesssim \norm{\nabla u}_{L^\infty}
\norm{\Lambda^s v}_{L^2}^2
+
\norm{\Lambda^s u}_{L^\frac{2}{s-1}}
\norm{\nabla v}_{L^\frac{2}{2-s}}
\norm{\Lambda^s v}_{L^2}
\\
&\lesssim (\norm{\nabla u}_{L^2}+\norm{\Lam^{s+1} u}_{L^2})
\norm{\Lambda^s v}_{L^2}^2
\lesssim 
\epsilon
(\norm{\Lambda^s v}_{L^2}^2 
+
\norm{\Lambda^s v}_{L^2}
\norm{\Lam^{s+1} u}_{L^2}),\\
\cI_4
&= \lt|\int_{\bR^2} \Lambda^s (v_i \partial_i u_j) \Lambda^s v_j\,\rt|
\lesssim (\norm{\Lambda^s v}_{L^2} \norm{\nabla u}_{L^\infty}
+ \norm{v}_{L^\infty}
\norm{\Lambda^{s+1} u}_{L^2})
\norm{\Lambda^s v}_{L^2} \\
&\lesssim(\norm{\nabla u}_{L^2}+\norm{\Lam^{s+1} u}_{L^2})
\norm{\Lambda^s v}_{L^2}^2 +
\norm{v}_{L^\infty}
\norm{\Lambda^{s+1} u}_{L^2}
\norm{\Lambda^s v}_{L^2}
\lesssim \epsilon 
\lt(\norm{\Lambda^s v}_{L^2}^2 + 
\norm{\Lambda^{s+1}u}_{L^2} 
\norm{\Lambda^s v}_{L^2}\rt).
\end{align*}

Moreover, for $\cI_5$, we divide the cases depending on $\alpha$.
\begin{align*}
\cI_5
&= \lt| \int ( \Lambda^{s-1} (u_i \p_i u_j)
- u_i \p_i \Lambda^{s-1} u_j)
\Lambda^{s-1} u_j \,\rt| 
\lesssim 
\norm{\nabla u}_{L^2} \norm{\Lambda^{s-1} u}_{L^4}^2 
\lesssim \norm{\nabla u}_{L^2} \norm{\Lambda^{s - \frac{1}{2}} u}_{L^2}^2 \\
&
\lesssim 
\begin{cases}
\norm{\nabla u}_{L^2} 
\norm{\Lambda^{s} u}_{L^2}
\norm{\Lambda^{s-1} u}_{L^2}
\lesssim 
\epsilon 
\norm{\Lambda^{s} u}_{L^2}
\norm{\Lambda^{s-1} u}_{L^2}
\qquad \mbox{ if } \alpha>0, \\
\norm{\Lambda^s u}_{L^2}^\frac{1}{s}
\norm{u}_{L^2}^{1 - \frac{1}{s}}
\norm{\Lambda^s u}_{L^2}^{2 - \frac{1}{s}}
\norm{u}_{L^2}^\frac{1}{s}
\lesssim \norm{\Lambda^s u}_{L^2}^2
\norm{u}_{L^2}\lesssim \epsilon \norm{\Lambda^s u}_{L^2}^2
\quad \mbox{ if } \alpha=0.
\end{cases}
\end{align*}

One can see that the terms $\cI_6-\cI_9$ can be controlled similarly as follows:
\begin{align*}
\cI_6
&= \lt| \int \Lambda^{s-1} (v_j v_i) \p_i \Lambda^{s-1} u_j\,\rt|
\lesssim \norm{\Lambda^{s-1} v}_{L^2}
\norm{v}_{L^\infty}
\norm{\Lambda^{s} u}_{L^2}
\lesssim \epsilon
\norm{\Lambda^{s-1} v}_{L^2}\norm{\Lambda^s u}_{L^2},\\
\cI_{7}
&= \lt|\int ( \Lambda^{s-1} (u_i \p_i v_j)
- u_i \Lambda^{s-1} \p_i v_j)
\Lambda^{s-1} v_j \rt| 
\, \lesssim \,
\norm{\nabla u}_{L^2}
\norm{\Lambda^{s-1} v}_{L^4}^2
+
\norm{\Lambda^{s-1} u}_{L^\frac{2}{s-1}}
\norm{\nabla v}_{L^{\frac{2}{2-s}}}
\norm{\Lambda^{s-1} v}_{L^2}
\\
&\lesssim
\norm{\nabla u}_{L^2}
\norm{\Lambda^s v}_{L^2}
\norm{\Lambda^{s - 1} v}_{L^2} \lesssim \ep 
\norm{\Lambda^s v}_{L^2}
\norm{\Lambda^{s - 1} v}_{L^2},\\
\cI_{8}
&=\lt|\int_{\bR^2} \Lambda^{s-1} (v_i \partial_i u_j) \Lambda^{s-1} v_j\,\rt|
\lesssim \norm{v}_{L^\infty}
\norm{\Lam^s u}_{L^2}
\norm{\Lam^{s-1} v}_{L^2}
+ \norm{\Lam^{s-1} v}_{L^\frac{2}{s-1}}
\norm{\nabla u}_{L^\frac{2}{2-s}}
\norm{\Lam^{s-1} v}_{L^2}
\\
&
\lesssim
(\norm{v}_{L^\infty}
+ \norm{\nabla v}_{L^2})
\norm{\Lam^s u}_{L^2}
\norm{\Lam^{s-1} v}_{L^2}
\lesssim \ep
\norm{\Lam^s u}_{L^2}
\norm{\Lam^{s-1} v}_{L^2}
,
\\
\cI_9
&= \lt| \int (\Lambda^s (u_i \p_i \theta) - u_i \Lambda^s \p_i \theta) \Lambda^s \theta\,\rt| \lesssim \norm{\nabla u}_{L^\infty}
\norm{\Lambda^s \theta}_{L^2}^2
+
\norm{\Lambda^s u}_{L^\frac{2}{s-1}}
\norm{\nabla \theta}_{L^\frac{2}{2-s}}
\norm{\Lambda^s \theta}_{L^2}
\\
&
\lesssim \lt(\norm{\Lambda^{s+1} u}_{L^2}+ 
\norm{\nabla u}_{L^2}\rt)
\norm{\Lambda^s \theta}_{L^2}^2
\lesssim \ep\lt(
\norm{\Lambda^{s+1} u}_{L^2}
\norm{\Lambda^s \theta}_{L^2}
+
\norm{\Lambda^s \theta}_{L^2}^2
\rt).
\end{align*}

For $\cI_{10}$ and $\cI_{12}$, we use \eqref{E70} and \eqref{E71}. Additionally, for $\cI_{11}$, we divide the cases depending on $\alpha$. It follows that
\begin{align*}
\cI_{10}
&=\lt|\int_{\bR^2}
\left(
\Lam^s (\mu(\te)\partial_i u_j) - 
\mu(\te)\Lam^s \partial_i u_j \right)
\Lam^s \partial_i u_j\, \rt|\\
&=\lt|\int_{\bR^2}
\left(
\Lam^s ((\mu(\te)-\mu(0))\partial_i u_j) - 
(\mu(\te)-\mu(0))\Lam^s \partial_i u_j \right)
\Lam^s \partial_i u_j\, \rt| \\
&\lesssim
\norm{\Lambda^s (\mu(\te)-\mu(0))}_{L^2}
\norm{\nabla u}_{L^\infty}
\norm{\Lambda^{s+1} u}_{L^2}
+
\norm{\nabla \mu (\theta)}_{L^\frac{2}{2-s}}
\norm{\Lambda^{s} u}_{L^\frac{2}{s-1}}
\norm{\Lambda^{s+1} u}_{L^2}
\\
&\lesssim
(1+\|\nabla \te\|_{L^2}^{\lceil s-1 \rceil})
\norm{\Lambda^s \theta}_{L^2}
\lt(\norm{\Lambda^{s+1} u}_{L^2}+
\norm{\nabla u}_{L^2}\rt)
\norm{\Lambda^{s+1} u}_{L^2}
+
\norm{\nabla \theta}_{L^\frac{2}{2-s}}
\norm{\Lambda^{s} u}_{L^\frac{2}{s-1}}
\norm{\Lambda^{s+1} u}_{L^2}
\\
&\lesssim
(1 + \epsilon^s)
\norm{\Lam^s \theta}_{L^2}
\lt( \norm{\Lambda^{s+1} u}_{L^2}
+ \norm{\nabla u}_{L^2}\rt)
\norm{\Lambda^{s+1} u}_{L^2}
\lesssim
\epsilon
\lt( \norm{\Lambda^{s+1} u}_{L^2}^2
+ 
\norm{\Lam^s \theta}_{L^2}
\norm{\Lambda^{s+1} u}_{L^2}\rt),\\
\cI_{11}
&= \lt| \int (\Lambda^{s-1} (u_i \p_i \theta) - u_i \Lambda^{s-1} \p_i \theta) \Lambda^{s-1} \theta\,\rt| \\
&\lesssim
\norm{\nabla u}_{L^\frac{2}{2-s}}
\norm{\Lambda^{s-1} \theta}_{L^\frac{2}{s-1}}
\norm{\Lambda^{s-1} \theta}_{L^2}
+
\norm{\Lambda^{s-1} u}_{L^2}
\norm{\nabla \theta}_{L^\frac{2}{2-s}}
\norm{\Lambda^{s-1} \theta}_{L^\frac{2}{s-1}}
\\
&
\lesssim
\norm{\Lambda^s u}_{L^2}
\norm{\nabla \theta}_{L^2}
\norm{\Lambda^{s-1} \theta}_{L^2}
+
\norm{\Lambda^{s-1} u}_{L^2} 
\norm{\Lambda^s \theta}_{L^2}
\norm{\nabla \theta}_{L^2}
\\
&\lesssim 
\norm{\Lambda^{s} u}_{L^2} 
\norm{\Lambda^s \theta}_{L^2}
\norm{\theta}_{L^2}
+
\begin{cases}
\norm{\Lambda^{s-1} u}_{L^2} 
\norm{\Lambda^{s} \te}_{L^2}
\norm{\nabla \te}_{L^2}
\qquad \mbox{ if } \alpha>0, \\
 \norm{\Lambda^s u}_{L^2}^{1-\frac{1}{s}}
\norm{u}_{L^2}^{\frac{1}{s}}\norm{\Lambda^s \te}_{L^2}^{1+\frac{1}{s}}\norm{\te}_{L^2}^{1-\frac{1}{s}} \quad \mbox{ if } \alpha=0,
\end{cases}
\\
&\lesssim \ep
\norm{\Lambda^{s} u}_{L^2} 
\norm{\Lambda^s \theta}_{L^2}
+
\begin{cases}
\ep\norm{\Lambda^{s-1} u}_{L^2} 
\norm{\Lambda^{s} \te}_{L^2}
\qquad \mbox{ if } \alpha>0, \\
\ep \norm{\Lambda^s u}_{L^2}^{1-\frac{1}{s}}\norm{\Lambda^s \te}_{L^2}^{1+\frac{1}{s}}
\quad \mbox{ if } \alpha=0,
\end{cases}
\\
\cI_{12}
&=\lt|\int_{\bR^2}
\left(
\Lam^{s-1} (\mu(\te)\partial_i u_j) - 
\mu(\te)\Lam^{s-1} \partial_i u_j \right)
\Lam^s \partial_i u_j\, \rt|\\
&\lesssim
\lt(\norm{\Lambda^{s-1} \mu (\theta)}_{L^\frac{2}{s-1}}
\norm{\nabla u}_{L^\frac{2}{2-s}}
+ \norm{\nabla \theta}_{L^\frac{2}{2-s}}
\norm{\Lambda^{s-1} u}_{L^\frac{2}{s-1}}\rt)
\norm{\Lambda^{s} u}_{L^2}
\\
&\lesssim
\lt(\norm{\nabla \mu (\theta)}_{L^2}
\norm{\Lam^s u}_{L^2}
+ \norm{\Lambda^s \theta}_{L^2}
\norm{\nabla u}_{L^2}\rt)
\norm{\Lambda^{s} u}_{L^2}
\\
&\lesssim
\lt(\norm{\nabla \theta}_{L^2}
\norm{\Lam^s u}_{L^2}
+ \norm{\Lambda^s \theta}_{L^2}
\norm{\nabla u}_{L^2}\rt)
\norm{\Lam^s u}_{L^2}
\lesssim
\ep \lt(\|\Lambda^{s}u\|_{L^2}^2 + \|\Lambda^{s}\te\|_{L^2}\|\Lambda^{s}u\|_{L^2}\rt).
\end{align*}

Lastly, using \eqref{Power1}, the terms $\cI_{13}$, $\cI_{14}$, and $\cI_{15}$ can be expressed as 
\begin{equation}
\label{E47a}
\begin{split}
\cI_{13}+\cI_{14}+\cI_{15}
&=\int_{\bR^2} \left( \Lam^{s-1} (u_i \p_i v_j) 
- u_i \p_i \Lam^{s-1} v_j \right) \p_j \Lam^{s-1} \theta
-  \int_{\bR^2} \left( \Lam^{s-1} (u_i \p_i \theta)
- u_i \p_i \Lam^{s-1} \theta \right) \Lam^{s-1} \p_j v_j \\
&\quad + \int_{\bR^2}  (\p_i \Lam^{s-1} \theta) (\p_j u_i)(\Lam^{s-1} v_j)
+ \int_{\bR^2} \Lam^{s-1} (v \cdot \nabla)u \cdot \Lam^{s-1}\nabla \theta,
\end{split}
\end{equation}
and the similar calculation shows that these terms can be bounded by
\begin{equation*}
    \epsilon C(\|\Lambda^s u\|_{L^2}\|\Lambda^s v\|_{L^2} + \|\Lambda^s v\|_{L^2}\|\Lambda^s \te\|_{L^2} + \|\Lambda^s \te\|_{L^2}\|\Lambda^s u\|_{L^2}).
\end{equation*} 
Hence, we 
combine all of these and \eqref{H^s_temp} with \eqref{X_s(t)}-\eqref{Y_s(t)} to conclude that
\[
\frac{1}{2}\frac{\ud}{\dt}\lt(X_s(t)\rt)^2 
+ Y_s(t)^2 \leq \epsilon C Y_s(t)^2.
\]
We choose $\epsilon_3>0$ sufficiently small such that $0 < \epsilon < \epsilon_3$ implies $\epsilon C \leq \frac{1}{2}$. 
Hence, we obtain 
\begin{equation*}
\frac{\ud}{\dt} X_s(t)^2 
+ Y_s(t)^2 \le 0,
\end{equation*}
which completes the proof.
\end{proof}

\begin{remark}
In the process of choosing $\epsilon_3$ to satisfy $\epsilon C \leq \frac{1}{2}$, we emphasize that the $C$ depends only on $s,\al, \be, \lomu$, and $\|\mu'\|_{C^{s-1}}$. Hence, the small constants $\epsilon_3$ stated in Lemma~\ref{L33} can be fixed accordingly, with their dependence inherited from that of $C$.
\end{remark}

\subsection{Proof of key proposition}\label{subsec_33}
Now, we combine Lemma~\ref{L31} and Lemma~\ref{L33} to conclude Proposition~\ref{key_prop}.

\begin{proof}[Proof of Proposition \ref{key_prop}]
Since the main system \eqref{E11} is locally well-posed in $H^s(\bR^2)$, given $(u_0, v_0, \te_0) \in H^s(\bR^2)$, there exists $T>0$ such that solution $(u,v,\te)$ uniquely exists in $\cC([0,T];H^s(\bR^2))$. We denote $T_{\max}$ by the maximal time of existence of \eqref{E11}.
By integrating \eqref{ingredi_direct-2} over $[0,t]$, we finally deduce from Lemma~\ref{L31} that
\begin{equation}
\label{result1}
    \sup_{\tau \in [0,t]}\lt(A_s(\tau)\rt)^2 + \int_0^t \lt(B_s(\tau)\rt)^2 \ud \tau \leq \lt(A_s(0)\rt)^2+ C \sup_{\tau \in [0,t]}\left(A_s(\tau)
+ \lt(A_s(\tau)\rt)^{s+1}\right)\cdot \int_0^t\lt(B_s(\tau)\rt)^2\ud \tau,
\end{equation}
where $C$ is a constant which depends only on $\al,\beta,\lomu$,$s$, and $\norm{ \mu'}_{C^{s-1}}$. To compute precisely, we set
\[
    \widetilde{A}_s(t):=\left( 
\|\Lambda^s u(t)\|_{L^2}^2
+\|\Lambda^{\delta_1} u(t)\|_{L^2}^2
+\| v(t)\|_{H^s}^2
+\|\te(t)\|_{H^s}^2
\right)^{\frac{1}{2}}.
\]
Then, it is easily shown that (c.f. Remark~\ref{rmk315})
\begin{equation}
\label{E77}
       \frac{3}{4}A_s(t)^2 \leq \widetilde{A}_s(t)^2 \leq \frac{5}{4}A_s(t)^2.
    \end{equation}
Below we set
\[
    E_s(t) := \sup_{\tau \in [0,t]}\lt(\widetilde{A}_s(\tau)\rt)^2 + \int_0^t (B_s(\tau))^2 \ud \tau.
\]
Then, \eqref{result1} and \eqref{E77} together yield that
\begin{equation}
\label{E_eqn}
    E_s(t) \leq \frac{5}{3}E_s(0) 
    + C\,E_s(t)^{\frac{3}{2}} 
    + C\,E_s(t)^{\frac{s+3}{2}},
\quad \text{for } t \in [0,T_{\max}).
\end{equation}
By  standard argument, there exists a small number $\epsilon_4$ such that if $0<\epsilon < \ep_4$ and $E_s(0) <\epsilon$, then $E_s(t) \leq 2\epsilon$ for all $t\geq0$.
To prove the details, we choose $\ep_4:= \min\left(\frac{1}{3(22C)^2}, \frac{1}{3(22C)^{\frac{2}{s+1}}}\right)$. Suppose that $0<\epsilon<\epsilon_4$ and $E_s(0)<\epsilon$. It is sufficient to show that
\begin{equation}
\label{E324}
    E_s(t) < 2\epsilon \quad \text{for all } t<T_{\max}.
\end{equation}Arguing by contradiction, suppose that \eqref{E324} is false. Then, by the time-continuity of the solution, there exists $0<t_1<T_{\max}$ such that
\begin{equation}
\label{E325}
    2 \epsilon \leq E_s(t_1) <3\epsilon.
\end{equation} 
Since we have chosen $\ep \leq \min\left(\frac{1}{3(22C)^2}, \frac{1}{3(22C)^{\frac{2}{s+1}}}\right)$, it follows that
\begin{equation}
\label{ep_condition}
    CE_s(t_1)^{\frac{3}{2}}+CE_s(t_1)^{\frac{s+3}{2}} \leq \frac{1}{11}E_s(t_1).
\end{equation}
However, \eqref{E325}-\eqref{ep_condition} contradicts to \eqref{E_eqn}.
Indeed, the following estimate follows from \eqref{E_eqn} with $E_s (0) \le \ep$ that for some $0<t_1<T_{\max}$
\[
E_s (t_1)
\le \frac{11}{6} \ep,
\]
which contradicts to $2 \ep \le E_s (t_1)$ in \eqref{E325}.
Therefore, \eqref{E324} is established, which implies that $T_{\max}=\infty$. Hence, we have established \eqref{small:no_damping} and \eqref{small:damped}, for both cases $\alpha=0$ or $\al>0$.

Now, we define
\[
\varepsilon^*:=\min(\ep_3,\ep_4)>0,
\]
where $\ep_3$ is the number stated in Lemma~\ref{L33}.
Then, it only remains to prove the temporal decay estimates, i.e. \eqref{s-1:decay}.
Again, in the process of approximating by mollification, we combine the assumptions \eqref{Assump1} and \eqref{Assump2} with the choice of $\varepsilon^* \leq \epsilon_4 $ in the above procedure to deduce that the smooth solution $(u,v,\te)$ satisfies the assumption of Lemma~\ref{L33}.
This leads to \eqref{E33}. 

Note that, since $\kappa>0$ is chosen to be less than $\frac{1}{2}$, it follows that
\[
    \frac{1}{2}X_s(t) \leq \left( \|\Lambda^s u\|_{L^2}^2
+ \|\Lambda^s v\|_{L^2}^2
+ \|\Lambda^s \te\|_{L^2}^2 
+ \|\Lambda^{s-1} u\|_{L^2}^2
+ \|\Lambda^{s-1} v\|_{L^2}^2
+\|\Lambda^{s-1} \te \|_{L^2}^2
\,\right)
\leq 2 X_s(t).
\]
Moreover, there exists a constant $K>0$ such that
\begin{equation}
\label{E34}
X_s(t)^\frac{2s}{s-1}
\le \frac{1}{K} Y_s(t)^2.
\end{equation}
For example, by using \eqref{E21}, we can show that 
\[
\norm{\Lam^{s-1} \theta}_{L^2}^2
\le \norm{\Lam^s \theta}_{L^2}^{2-\frac{2}{s}}
\norm{\theta}_{L^2}^\frac{2}{s}.
\]
Then, this implies
\[
\norm{\Lam^{s-1} \theta}_{L^2}^\frac{2s}{s-1}
\|(u_0,v_0,\te_0)\|_{L^2}^\frac{-2}{s-1}
\le (\norm{\Lam^{s-1} \theta}_{L^2}^2
\norm{\theta}_{L^2}^\frac{- 2}{s})^\frac{s}{s-1}
\le \norm{\Lam^s \theta}_{L^2}^2.
\]
Similarly, we can use the bound
\[
    \|\Lambda^su\|_{L^2}^2 = 
    \|\Lambda^su\|_{L^2}^{2-\frac{2}{s}}
    \|\Lambda^su\|_{L^2}^{\frac{2}{s}} \leq
    \|\Lambda^su\|_{L^2}^{2-\frac{2}{s}}(2\ep)^{\frac{2}{s}}.
\]
Proceeding in this way, we can bound the remaining terms using simple calculations and thus deduce \eqref{E34}. Therefore, we derive the following estimate
\[
\frac{\ud}{\dt} X_s(t)^2
+ K X_s(t)^\frac{2s}{s-1} \le 0.
\]
Integrating in time over $(0,t)$ and after a straightforward computation, we obtain
\[
X_s(t)
\le \left( \frac{X_s(0)^\frac{2}{s-1}}{t X_s(0)^\frac{2}{s-1} \frac{K}{s-1} + 1} \right)^{\frac{s-1}{2}}.
\]
Therefore, we arrive at
\[
\norm{\Lambda^{s-1} u(t)}_{L^2}
+ \norm{\Lambda^{s-1} v(t)}_{L^2}
+ \norm{\Lambda^{s-1} \theta(t)}_{L^2}
\le 2X_s(t)
\le C (1 + t)^{- \frac{s-1}{2}},
\]
where the constant $C$ is independent of $t$, and depends only on $s,\al,\be,\lomu, \|\mu'\|_{C^{s-1}}$, and $\|(u_0,v_0,\te_0)\|_{H^s}$. 
Finally, the a priori estimate concerning the temporal decay estimates has been established.
This allows us to pass to the limit in the approximation scheme, thereby completing the proof.
\end{proof}
\begin{remark}
    We remark that one can also obtain
\begin{equation}
\label{E57}
    \norm{\Lambda^{s} u(t)}_{L^2}
+ \norm{\Lambda^{s} v(t)}_{L^2}
+ \norm{\Lambda^{s} \theta(t)}_{L^2}
\le 2X_s(t)
\le C (1 + t)^{- \frac{s-1}{2}}.
    \end{equation}
This will be used later.
\end{remark}

\section{Estimates on the spaces of higher regularity}
\label{S4}
\subsection{Stability estimate}  In this section, we prove a proposition concerning the stability in higher order Sobolev spaces.  
Without loss of generality, it suffices to assume that $1<s<2$. Again, we employ the number $\lambda$ in \eqref{lambda}. 
\begin{proposition} 
\label{P33}
Let $\alpha \geq 0$, and $s \in(1,2)$, and suppose that the initial data $(u_0,v_0,\theta_0)$ belong to $\cC^{\infty}_0(\bR^2)$.  For a number $\varepsilon^*>0$ stated in Proposition~\ref{key_prop}, there holds followings: 
\begin{enumerate}
    \item (Undamped case) In the case of $\alpha=0$, suppose that $0<\epsilon<\varepsilon*$ and
\[
\|u_0\|_{H^s(\bR^2)}+\|v_0\|_{H^s(\bR^2)}+\|\te_0\|_{H^s(\bR^2)} < \ep.
\]
Then, the global solution $(u,v,\te)$ of the Cauchy problem \eqref{E11} constructed in Proposition~\ref{key_prop} belong to $\mathcal{C}^{\infty} ([0,\infty); \mathcal{C}^{\infty} (\bR^2))$. For each $m \geq s$, there exists a constant $C$ depends on $m, \epsilon, s, \beta$, and $\|\mu'\|_{C^{m-1}(\bR)}$  such that
\begin{align*}
\sup_{t \in [0,\infty)}\left(\|u(t)\|_{H^m(\bR^2)}
+ \|v(t)\|_{H^m(\bR^2)}
+ \|\te(t)\|_{H^m(\bR^2)} \right)
+&\lambda \left( \int_0^{\infty}  \|\nabla u(t)\|_{H^m}^2 +\|v(t)\|_{H^m}^2+\|\nabla \te(t)\|_{H^{m-1}}^2  \,\ud t \right)^{\frac{1}{2}} \\ 
&\qquad\leq C( \|u_0\|_{H^m(\bR^2)}+\|v_0\|_{H^m(\bR^2)}+\|\te_0\|_{H^m(\bR^2)}).
\end{align*}
    \item (Damped case) In case of $\alpha >0$, suppose that $0<\epsilon<\varepsilon*$ and
\[
\|u_0\|_{\dot{H}^s\cap\dot{H}^1}+\|v_0\|_{H^s}+\|\te_0\|_{H^s} < \epsilon.
\]
Then, the global solution $(u,v,\te)$ of the Cauchy problem \eqref{E11} constructed in Proposition~\ref{key_prop} belong to $\mathcal{C}^{\infty} ([0,\infty); \mathcal{C}^{\infty} (\bR^2))$.  Indeed, for each $m \geq s$, there exists a constant $C$ depends on $m, \epsilon, s, \al, \beta$, and $\|\mu'\|_{C^{m-1}(\bR)}$ such that
\begin{align*}
    \sup_{t \in [0,\infty)}\left( \|u(t)\|_{\dot{H}^m\cap\dot{H}^{1}}
    +\|v(t)\|_{H^m}
    +\|\te(t)\|_{H^m} \right)+&
    \lambda\left( \int_0^{\infty}  \|\nabla u(t)\|_{H^m}^2 +\|v(t)\|_{H^m}^2+\|\nabla \te(t)\|_{H^{m-1}}^2  \,\ud t \right)^{\frac{1}{2}}\\
    & \qquad \leq C( \|u_0\|_{\dot{H}^m\cap\dot{H}^1(\bR^2)}+\|v_0\|_{H^m(\bR^2)}+\|\te_0\|_{H^m(\bR^2)}).
\end{align*}
\end{enumerate}

\end{proposition}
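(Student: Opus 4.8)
The plan is to prove Proposition~\ref{P33} as an a priori propagation-of-regularity estimate, building on the base-level control already secured in Proposition~\ref{key_prop}: the global solution satisfies $\sup_{t}\big(\|u\|_{H^s}+\|v\|_{H^s}+\|\te\|_{H^s}\big)\lesssim\epsilon$ together with the integrated dissipation bound $\int_0^\infty\big(\|\nabla u\|_{H^s}^2+\|v\|_{H^s}^2+\|\nabla\te\|_{H^{s-1}}^2\big)\,\dt\lesssim\epsilon^2$. Fixing $m\ge s$ (and reducing to $1<s<2$ as the statement allows), I would repeat the derivation of \eqref{E12} and Lemma~\ref{L31} verbatim with $s$ replaced by $m$: apply $\Lambda^{2m}$ and $\Lambda^{2\delta_1}$ to the equations in \eqref{E11}, pair with the solution, and add $\eta$ times the $v$--$\te$ cross-term device of \eqref{E31a}--\eqref{E32} now used at level $m$, to compensate for the absent thermal diffusion. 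This yields $\tfrac12\tfrac{\ud}{\dt}A_m^2+B_m^2\le\sum_k\cI_k$, where $A_m,B_m$ are the obvious level-$m$ analogues of the quantities in Lemma~\ref{L31}.

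The core is the estimation of the $\cI_k$, which I would carry out with the same tools as in Lemma~\ref{L31}: Kato--Ponce (Lemma~\ref{L24}), the Moser bound \eqref{E70}, the chain rule \eqref{E71}, and the Gagliardo--Nirenberg forms \eqref{E14} and Lemma~\ref{L23}. The organizing principle is to distribute derivatives in each trilinear term so that the \emph{low-order} factor is a base-level ($H^s$) norm of $u,v,\te$, while the two \emph{high-order} factors are controlled by $A_m$ and $B_m$. Here I would repeatedly use that $\|\Lambda^m u\|_{L^2}$, $\|\Lambda^m\te\|_{L^2}$ and $\|v\|_{H^m}$ are all dominated by $B_m$ (by interpolation, since the dissipation reaches $\Lambda^{m+1}u$, $\Lambda^m\te$ and $\Lambda^m v$), and that $\|\Lambda^m u\|_{L^{2/(s-1)}}\lesssim\|\Lambda^{m+2-s}u\|_{L^2}\lesssim\|\Lambda^m u\|_{L^2}^{s-1}\|\Lambda^{m+1}u\|_{L^2}^{2-s}$. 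The decisive feature inherited from Proposition~\ref{key_prop} is that each base-level factor (e.g.\ $\|v\|_{L^\infty}\lesssim\|v\|_{H^s}$, $\|\nabla u\|_{L^\infty}\lesssim\|\nabla u\|_{H^s}$, or $\|\nabla\te\|_{L^{2/(2-s)}}\lesssim\|\Lambda^s\te\|_{L^2}$) is \emph{simultaneously} uniformly of size $O(\epsilon)$ and square-integrable in time. For the $\te$-transport term and the cross-terms I would avoid $\|u\|_{L^\infty}$ (not controlled by the $H^s$-dissipation in the undamped case) and instead invoke the integration-by-parts identity \eqref{Power1} to recast $\cI_{11}+\cI_{13}$ as $N_1+N_2+N_3$, so that only $\|\nabla u\|$-type factors occur.

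Collecting these bounds, each $\cI_k$ takes the form $\rho(t)\,A_m^{a}B_m^{b}$ with $a+b=2$ and $\rho(t)\lesssim\epsilon$ a base-level factor lying in $L^2_t$. I would then apply Young's inequality, extracting a \emph{fixed} fraction of $B_m^2$ (choosing the Young weight in an $m$-dependent way so the Kato--Ponce constant is swallowed by that fixed fraction) and leaving a remainder $g(t)A_m^2$. Since $\rho\in L^2_t\cap L^\infty_t$, any power of $\rho$ of exponent at least $2$ is integrable, whence $g\in L^1(0,\infty)$ with $\int_0^\infty g\,\dt\lesssim\epsilon^{c}$ for some $c>0$. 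After summation this gives $\tfrac{\ud}{\dt}A_m^2+B_m^2\le g(t)A_m^2$, and Gr\"onwall's lemma yields $\sup_t A_m^2+\int_0^\infty B_m^2\,\dt\le C\,A_m(0)^2$ with $C=\exp\!\big(C(m)\!\int g\big)=1+O(\epsilon)$. Crucially, the smallness threshold is unchanged from Proposition~\ref{key_prop}: all $m$-dependence is pushed into the finite constant $C$, which is exactly why $\varepsilon^\ast$ may be taken independent of $m$ and why $C=1+O(\epsilon)$. A standard continuation argument based on the local theory then promotes this a priori bound to a genuine global $H^m$ bound; letting $m\to\infty$ gives $(u,v,\te)\in\cC^\infty([0,\infty);\cC^\infty(\bR^2))$, and the damped case is identical except that the stronger dissipation $\|u\|_{H^{m+1}}$ replaces $\|\nabla u\|_{H^m}$ and supplies zeroth-order control of $u$.

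The step I expect to be the main obstacle is the temperature-dependent viscosity commutator $\cI_9$ (and its lower-order companion) in the regime $1<s<2$. There the commutator forces the genuinely fractional pairing $\|\Lambda^s\te\|_{L^2}\,A_m^{s-1}B_m^{3-s}$, in which the $B_m$-power strictly exceeds $1$; a naive Young split would leave a factor of $B_m^2$ with an $O(\epsilon)$ coefficient, and absorbing it directly would demand an $m$-dependent smallness, contradicting the claimed $m$-independence of $\varepsilon^\ast$. The resolution---and the delicate point of the whole proof---is precisely the observation above: because the base-level coefficient is both $L^\infty_t$-small and $L^2_t$, the Young exponent $2/(s-1)>2$ it receives turns it into an $L^1_t$ Gr\"onwall weight, while the fixed fraction of $B_m^2$ is absorbed with an $m$-independent numerical constant. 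Keeping track of this dual role of the $H^s$ dissipation, uniformly in $m$, is what makes the argument close.
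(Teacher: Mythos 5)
Your proposal is correct and follows essentially the same route as the paper: a level-$m$ analogue of the energy identity \eqref{E12} with the $v$--$\theta$ cross term, nonlinear estimates arranged so that every low-order factor is an $H^s$ quantity controlled by Proposition~\ref{key_prop} (hence simultaneously $O(\epsilon)$ in $L^\infty_t$ and square-integrable), absorption of a fixed, $m$-independent fraction of the dissipation, and Gr\"onwall with the integrable weight $B_s^2$, yielding $C=1+\cO(\epsilon)$. You also correctly single out the viscosity commutator $\cJ_9$ as the delicate term and resolve it the same way the paper does (the paper splits the coefficient as $\rho^{s-1}\rho^{2-s}$ before Young rather than raising $\rho$ to the power $2/(s-1)$, but both exploit the identical $L^2_t\cap L^\infty_t$ mechanism).
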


\begin{proof} 
In view of Proposition~\ref{key_prop}, there exist a unique solution $(u,v,\te)$ in $C([0,\infty);H^s(\bR^2))$. The standard blow-up criterion reveals that $H^m$-regularity of the solution is preserved over time, for any $m \geq s$. Hence, we conclude that the solution $(u,v,\te)$ is indeed in $C^{\infty}([0,\infty);C^{\infty}(\bR^2))$.

We recall $\delta_1$ defined in \eqref{delta_1},
    \begin{equation*}
        {\delta_1} =
        \begin{cases}
        0 & \text{if } \alpha =0,\\
        1 & \text{if } \alpha >0.    
        \end{cases}
    \end{equation*}
The following identity is obtained by substituting $m$ for 
$s$ in the expression previously used in \eqref{E12}.
Here, we have chosen $\eta$ to satisfy $0 < \eta < \beta/(4+2\beta^2) < \frac{1}{4}$. 
\begin{equation}
\label{E13}
\begin{split}
&\frac{1}{2} \frac{\ud}{\dt}
\left( \|\Lambda^m u\|_{L^2}^2
+ \|\Lambda^m v\|_{L^2}^2
+ \|\Lambda^m \te\|_{L^2}^2 
+ \|\Lambda^{\delta_1} u\|_{L^2}^2
+ \| v\|_{L^2}^2
+\| \te \|_{L^2}^2
-\eta  \int_{\bR^2} \Lam^{m-1} v \cdot \Lam^{m-1}\nabla \theta
-\eta\int_{\bR^2}  v \cdot \nabla \theta
\right) \\
&\quad + \left(\alpha \|\Lambda^m u\|_{L^2}^2
+ \lomu\|\Lambda^m \nabla u\|_{L^2}^2 
+ \frac{\beta}{2} ||\Lambda^m v\|_{L^2}^2 
+ \alpha \|\Lambda^{\delta_1} u\|_{L^2}^2 
+ \lomu\|\Lambda^{\delta_1} \nabla u\|_{L^2}^2
+ \frac{\beta}{2} \| v \|_{L^2}^2\right)
+ \frac{\eta}{2} \|\nabla \te\|_{H^{m-1}}^2 \\
&\leq - \int_{\bR^2} \Lambda^m ((u \cdot \nabla) u) \cdot \Lambda^m u
+ \int_{\bR^2} \Lambda^m (v \otimes  v) : \Lambda^m \nabla u 
- \int_{\bR^2} \Lambda^m ((u \cdot \nabla) v) \cdot \Lambda^m v
- \int_{\bR^2} \Lambda^m ((v \cdot \nabla) u) \cdot \Lambda^m v\\
&\quad - \int_{\bR^2} \Lambda^{ \delta_1} ((u \cdot \nabla) u) \cdot \Lambda^{ \delta_1} u
+ \int_{\bR^2} \Lambda^{ \delta_1} (v \otimes  v) : \Lambda^{ \delta_1}  \nabla u
- \int_{\bR^2} (v \cdot \nabla ) u \cdot v  - \int_{\bR^2} \Lambda^m (u \cdot \nabla \te) \Lambda^m \te \\
&\quad -\int_{\bR^2}\left(\Lam^m (\mu(\te)\nabla u) - \mu(\te)\Lam^m \nabla u \right):\Lam^m \nabla u
-\int_{\bR^2}\left(\Lam^{\delta_1} (\mu(\te)\nabla u) - \mu(\te)\Lam^{\delta_1} \nabla u \right):\Lambda^{\delta_1}\nabla u \\
&\quad +
\eta\int_{\bR^2} \Lam^{m-1} (u \cdot \nabla)v \cdot \Lam^{m-1}\nabla \theta 
+ \eta\int_{\bR^2} \Lam^{m-1} (v \cdot \nabla)u \cdot \Lam^{m-1}\nabla \theta 
- \eta\int_{\bR^2} \Lam^{m-1} ( u \cdot \nabla \theta) \cdot \Lam^{m-1}\dv v \\
&\quad + 2\eta \int (v\cdot \nabla)u \cdot \nabla \te\\
&=: \sum_{k=1}^{14}\cJ_k.
\end{split}
\end{equation}
Thus, for each time $t>0$, we define $A_m(t)$ and $B_m(t)$ by
\begin{equation}
\label{A_m(t)}
A_m(t) := 
\left( 
\|\Lambda^m u(t)\|_{L^2}^2+
\|\Lambda^{\delta_1} u(t)\|_{L^2}^2+
\| v(t)\|^2_{H^m}+
\|\te(t)\|^2_{H^m}
-\eta  \int_{\bR^2} \Lam^{m-1} v(t) \cdot \Lam^{m-1}\nabla \theta(t)
-\eta\int_{\bR^2}  v(t) \cdot \nabla \theta(t)
\right)^{\frac{1}{2}} 
\end{equation}
and
\begin{equation}
\label{B_m(t)}
B_m(t) := \lambda\left( \|\nabla u(t)\|_{\dot{H}^m}^2 
+ \| v(t) \|^2_{H^m} + \|\Lambda^m \te(t)\|_{L^2}^2 \right)^{\frac{1}{2}}. 
\end{equation}
Recalling $B_s(t)$ defined in \eqref{B_s(t)}, 
our claim is that
there exists a constant $C>0$ which depends on $m,s,\al, \be$, and $\lomu$ such that
\begin{equation}\label{E78}
\frac{\ud}{\dt}A_m^2 + B_m^2 \leq C\lt(B_s^2A_m^2\rt). 
\end{equation}

By the assumption, we can apply Proposition ~ \ref{key_prop}.
In particular, notice that
\begin{equation}
\label{stab_timeL2}
\frac{1}{\lambda}
\int_0^{\infty}
B_s (t)^2 \dt 
=
\int_0^{\infty}
\lt(\|\nabla u(t)\|_{H^s(\bR^2)}^2
+\|v(t)\|_{H^s(\bR^2)}^2
+\|\Lambda^s\te(t)\|_{L^2(\bR^2)}^2\rt)\dt \lesssim \epsilon.
\end{equation}
Thus, if the claim \eqref{E78} is shown, then we use Gr\"onwall's inequality to obtain
\begin{equation*}
    A_m(t) \leq A_m(0)e^{C\int_0^t\lt(B_s(\tau)\rt)^2\ud\tau} \leq  A_m(0)e^{C\epsilon}.
\end{equation*}
Moreover, by integrating \eqref{E78} over $[0,t]$, we have
\begin{align*}
\int_0^t \lt(B_m(\tau)\rt)^2 \ud \tau  
\leq \lt(A_m(t)\rt)^2 + 
\int_0^t \lt(B_m(\tau)\rt)^2 \ud \tau 
&\leq \lt(A_m(0)\rt)^2 + 
\sup_{[0,t]}\lt(A_m(\tau)\rt)^2\int_0^t \lt(B_s(\tau)\rt)^2 \ud \tau\\
& \leq (1+\tilde{C}e^{C\epsilon}\epsilon)A_m(0)^2.
\end{align*}
Combining these, we deduce that
\begin{equation*}
    \sup_{t \in [0,\infty)}A_m(t) + \lt(\int_0^{\infty}\lt(B_m(t)\rt)^2\,\dt\rt)^{\frac{1}{2}} \leq C^*A_m(0),
\end{equation*}
where $C^* = 1+\cO(\epsilon)$. 
Then, the proof is finished.

\vspace{1em}

Hence, our goal is to establish the estimate $\cJ_k$ in \eqref{E13} in the form of
\[
\sum_{k=1}^{13}\cJ_k \leq C \lt(B_s(t)A_m(t)B_m(t) + \lt(B_s(t)A_m(t)\rt)^2\rt) + \frac{1}{4} B_m(t)^2.
\]
If the above is shown, Young's inequality and the absorption scheme conclude \eqref{E78}.

For the terms $\cI_5, \cI_6,\cI_7$ and $\cI_{10}$, we consider two cases, either $ \alpha = 0$ or $ \alpha > 0$. 

Considering the case where $ \alpha = 0$ (i.e., $ \delta_1 = 0$), we proceed by integration by parts to obtain:
\begin{equation*}
\cJ_5 = \int_{\bR^2} (u \cdot \nabla u) \cdot u = 0
\quad \text{and} \quad 
\cJ_6 + \cJ_7 = \int_{\bR^2} (v \otimes  v) : \nabla u
- \int_{\bR^2} (v \cdot \nabla ) u \cdot v
= 0.
\end{equation*}
Moreover, by definition, it is obvious that $\cJ_{10}=0$.

If $\alpha >0$, equivalently $\delta_1 =1$, then we have 
\begin{align*}
\cJ_5
&\lesssim \norm{ \nabla u}_{L^{\infty}}
\norm{ \nabla u}_{L^2}
\norm{ \nabla u}_{L^2}
\lesssim \norm{ \nabla u}_{H^s}
\norm{ \nabla u}_{L^2}
\norm{ \nabla u}_{L^2}
\lesssim B_s A_m B_m, \\
\cJ_6
&\lesssim \norm{ \nabla v}_{L^2}
\norm{ v}_{L^\infty}
\norm{ \Lam^2 u}_{L^2}
\lesssim \norm{ \nabla v}_{L^2}
\norm{v}_{H^m} 
\norm{ \nabla u}_{H^m}
\lesssim B_s A_m B_m
, \\
\cJ_7
&\lesssim \norm{v}_{L^\infty}
\norm{ \nabla u}_{L^2}
\norm{v}_{L^2}
\lesssim \norm{v}_{H^s} 
\norm{ \nabla u}_{L^2}
\norm{v}_{L^2} 
\lesssim B_s A_m B_m,\\
\cJ_{10}
&\lesssim \norm{\nabla \te}_{L^2}
\norm{ \nabla u}_{L^{\infty}}
\norm{\Lambda^2 u}_{L^2}
\lesssim \norm{\te}_{H^m} 
\norm{ \nabla u}_{H^s}
\norm{\nabla u}_{H^m} 
\lesssim A_m B_s B_m,
\end{align*}
where we used $\nabla \mu (\theta) = \mu'(\theta) \nabla \theta$ in $\cJ_{10}$.

The remaining terms can be handled similarly, regardless of the presence of the damping term $\al u$. To demonstrate this, using \eqref{E14}, Lemma~\ref{L23}, Lemma~\ref{L24}, and $\dv u = 0$, we see that
\begin{align*}
\cJ_1 
&\lesssim
\norm{ \nabla u}_{L^2}
\norm{\Lambda^m u}_{L^4}
\norm{ \Lambda^m u}_{L^4} 
\lesssim
\norm{ \nabla u}_{L^2}
\norm{\Lam^{m+1} u}_{L^2}
\norm{\Lambda^m u}_{L^2}
\lesssim B_s B_m A_m
, \\
\cJ_2
&\lesssim \norm{ \Lam^m v}_{L^2}
\norm{v}_{L^\infty}
\norm{\Lam^{m+1} u}_{L^2}
\lesssim
\norm{ \Lam^m v}_{L^2}
\norm{v}_{H^s}
\norm{\Lam^{m+1} u}_{L^2}
\lesssim A_m B_s B_m
, \\
\cJ_3
&\lesssim
\norm{ \nabla u}_{L^\infty}
\norm{ \Lam^m v}_{L^2}^2
+
\norm{ \Lam^m u}_{L^\frac{2}{s-1}}
\norm{ \nabla v}_{L^\frac{2}{2-s}}
\norm{ \Lam^m v}_{L^2} 
\lesssim
\|\nabla u\|_{H^s}\|\Lambda^m v\|_{L^2}^2 +
\norm{ \Lam^{m-s+2} u}_{L^2}
\norm{ \Lam^s v}_{L^2}\norm{ \Lam^m v}_{L^2} \\
&\lesssim
\|\nabla u\|_{H^s}\|\Lambda^m v\|_{L^2}^2 +
\norm{ \nabla u}_{H^m}
\norm{ \Lam^s v}_{L^2}
\norm{ \Lam^m v}_{L^2}
\lesssim B_s B_m A_m + B_m B_s A_m,
\\
\cJ_4
&\lesssim \norm{ \Lam^m v}_{L^2}
\norm{ \nabla u}_{L^\infty}
\norm{ \Lam^m v}_{L^2}
+ \norm{v}_{L^\infty}
\norm{\Lam^{m+1} u}_{L^2}
\norm{ \Lam^m v}_{L^2} \\
&\lesssim  \norm{ \Lam^m v}_{L^2} \|\nabla u\|_{H^s}
\norm{ \Lam^m v}_{L^2}
+ 
\norm{ v}_{H^s}
\norm{\Lam^{m+1} u}_{L^2}
\norm{ \Lam^m v}_{L^2}
\lesssim B_m B_s A_m + B_s B_m A_m,
\end{align*}
and
\begin{align*}
\cJ_8
=\lt|\int_{\bR^2} \lt(\Lambda^m(u\cdot \nabla \te)-(u\cdot \nabla) \Lambda^m \te \rt): \Lambda^m \te \, \rt|
&\lesssim
\|\nabla u \|_{L^{\infty}}
\|\Lambda^m \te\|_{L^2}^2
+  
\|\Lambda^m u \|_{L^{\frac{2}{s-1}}}
\|\nabla \te\|_{L^{\frac{2}{2-s}}} 
\|\Lambda^{m} \te\|_{L^2} \\
&\lesssim 
\|\nabla u \|_{H^s}
\|\Lambda^m \te\|_{L^2}^2
+  
\|\Lambda^{m-s+2} u \|_{L^2}
\|\Lambda^s \te\|_{L^2} 
\|\Lambda^{m} \te\|_{L^2} \\
&\lesssim 
\|\nabla u \|_{H^s}
\|\Lambda^m \te\|_{L^2}^2
+  
\|\nabla u \|_{H^m}
\|\Lambda^s \te\|_{L^2} 
\|\Lambda^{m} \te\|_{L^2}\\
&
\lesssim B_s A_m B_m + B_m B_s A_m.
\end{align*}

We apply Young's inequality together with Lemma~\ref{L24}, \eqref{E70}, and \eqref{E71} to obtain 
\begin{align*}
\cJ_9
&=
\bigg|\int_{\bR^2}\left(\Lam^m (\mu(\te)\nabla u) - \mu(\te)\Lam^m \nabla u \right):\Lam^m \nabla u\bigg|\\
&=\bigg|\int_{\bR^2}\left(\Lam^m ((\mu(\te)-\mu(0))\nabla u) - (\mu(\te)-\mu(0))\Lam^m \nabla u \right):\Lam^m \nabla u\bigg|\\
&\lesssim 
\|\Lambda^m (\mu(\te)-\mu(0))\|_{L^2}
\|\nabla u \|_{L^{\infty}} 
\|\Lambda^{m+1}u\|_{L^2} 
+  
\|\nabla \mu(\te)\|_{L^{\frac{2}{2-s}}} 
\|\Lambda^m u \|_{L^{\frac{2}{s-1}}} 
\|\Lambda^{m+1}u\|_{L^2}\\
&\lesssim 
(1+\|\nabla \te\|_{L^2}^{\lceil m-1 \rceil})
\|\Lambda^m \te\|_{L^2} 
\|\nabla u \|_{H^s} 
\|\Lambda^{m+1}u\|_{L^2} 
+  
\|\nabla \te\|_{L^{\frac{2}{2-s}}} 
\|\Lambda^{m-s+2} u \|_{L^2}
\|\Lambda^{m+1}u\|_{L^2}\\
&\lesssim
(1+\|\te\|^{\lceil m-1 \rceil}_{H^s})
\|\Lambda^m \te\|_{L^2} 
\|\nabla u \|_{H^s} 
\|\Lambda^{m+1}u\|_{L^2} 
+  
\|\Lambda^s \te\|_{L^2} 
\|\Lambda^{m-s+2} u \|_{L^2}
\|\Lambda^{m+1}u\|_{L^2}.
\end{align*}
Now, we use Young's inequality with $\|\Lambda^{m-s+2} u \|_{L^2} \lesssim \|\Lambda^{m} u \|_{L^2}^{s-1} 
\|\Lambda^{m+1}u\|_{L^2}^{2-s}$ to see that
\begin{align*}
\|\Lambda^s \te\|_{L^2} 
\|\Lambda^{m-s+2} u \|_{L^2}
\|\Lambda^{m+1}u\|_{L^2}
&\lesssim 
(\|\Lambda^s \te\|_{L^2} 
\|\Lambda^{m} u \|_{L^2})^{s-1}
\|\Lambda^s \te\|_{L^2}^{2-s}
\|\Lambda^{m+1}u\|_{L^2}^{3-s} \\
&\leq 
C(\|\Lambda^s \te\|_{L^2} 
\|\Lambda^{m} u \|_{L^2})^2 + 
\frac{\underline{\mu}}{4} 
\|\Lambda^s \te\|_{L^2}^{\frac{4-2s}{3-s}}
\|\Lambda^{m+1}u\|_{L^2}^{2} \\
&\leq
C(\|\Lambda^s \te\|_{L^2} 
\|\Lambda^{m} u \|_{L^2})^2 + 
\frac{\underline{\mu}}{4} 
\ep^{\frac{4-2s}{3-s}}
\|\Lambda^{m+1}u\|_{L^2}^{2}\\
&\leq
C
(\|\Lambda^s \te\|_{L^2} 
\|\Lambda^{m} u \|_{L^2})^2 + 
\frac{\underline{\mu}}{4} 
\|\Lambda^{m+1}u\|_{L^2}^{2}.
\end{align*}
Hence, the term $\cJ_9$ can be bounded by
\[
    \cJ_9 \leq C(A_m B_s B_m) + C(B_sA_m)^2 + \frac{1}{4}B_m^2. 
\]
On the other hands, by using $\nabla \mu(\te) = \mu'(\te)\nabla \te$,
\begin{align*}
\cJ_{10}
&=\lt|\int_{\bR^2}\left(\Lam (\mu(\te)\nabla u) - \mu(\te)\Lam \nabla u \right) : \Lambda \nabla u \rt|\\
& \lesssim \|\nabla \te\|_{L^2}
\|\nabla u \|_{L^{\infty}}
\|\Lambda^2 u \|_{L^2} 
\lesssim \|\te\|_{H^m}
\|\nabla u \|_{H^s}
\|\nabla u \|_{H^m}
\lesssim A_m B_s B_m,\\
\cJ_{12}
&\lesssim \norm{ \Lam^{m-1} v}_{L^2}
\norm{ \nabla u}_{L^\infty}
\norm{ \Lam^m \theta}_{L^2}
+ \norm{v}_{L^\infty}
\norm{ \Lam^m u}_{L^2}
\norm{ \Lam^m \theta}_{L^2} \\
&\lesssim \norm{v}_{H^m}
\norm{ \nabla u}_{H^s}
\norm{ \Lam^m \theta}_{L^2}
+ \norm{v}_{H^s}
\norm{ \Lam^m u}_{L^2}
\norm{ \Lam^m \theta}_{L^2}
\lesssim A_m B_s B_m + B_s A_m B_m.
\end{align*}

To estimate $\cJ_{11}$ and $\cJ_{13}$ terms, we use cancellation properties and integration by parts, similarly to  \eqref{Power1}. It follows that
\begin{equation}\label{Power2}
    \begin{aligned}
\cJ_{11} + \cJ_{13}
&=  \int_{\bR^2} \left( \Lam^{m-1} (u_i \p_i v_j) 
- u_i \p_i \Lam^{m-1} v_j \right) \p_j \Lam^{m-1} \theta
-  \int_{\bR^2} \left( \Lam^{m-1} (u_i \p_i \theta)
- u_i \p_i \Lam^{m-1} \theta \right) \Lam^{m-1} \p_j v_j \\
&\quad + \int_{\bR^2}  (\p_i \Lam^{m-1} \theta) (\p_j u_i)(\Lam^{m-1} v_j)
=: M_1 + M_2 + M_3.
    \end{aligned}
\end{equation}
By using \eqref{E21}, \eqref{E22}, and \eqref{E24}, one has
\begin{equation*}
\begin{aligned}
M_1 
&\lesssim \norm{ \nabla u}_{L^{\infty}}
\norm{ \Lam^{m-1} v}_{L^2}
\norm{ \Lam^m \theta}_{L^2}
+
\norm{ \Lam^{m-1} u}_{L^{\frac{2}{s-1}}}
\norm{ \nabla v}_{L^{\frac{2}{2-s}}}
\norm{ \Lam^m \theta}_{L^2}  \\
& 
\lesssim \norm{ \nabla u}_{H^s}
\norm{ v}_{H^m}
\norm{ \Lam^m \theta}_{L^2}
+
\norm{ \Lambda^{m-s+1} u}_{L^2}
\norm{ \Lambda^s v}_{L^2}
\norm{ \Lam^m \theta}_{L^2} \\
&
\lesssim \norm{ \nabla u}_{H^s}
\norm{ v}_{H^m}
\norm{ \Lam^m \theta}_{L^2}
+
\norm{ \nabla u}_{H^m}
\norm{ \Lambda^s v}_{L^2}
\norm{ \Lam^m \theta}_{L^2}
\lesssim B_s A_m B_m + B_m B_s A_m,\\
M_2 &\lesssim \norm{ \nabla u}_{L^{\infty}}
\norm{ \Lam^{m-1} \te}_{L^2}
\norm{ \Lam^m v}_{L^2}
+
\norm{ \Lam^{m-1} u}_{L^{\frac{2}{s-1}}}
\norm{ \nabla \te}_{L^{\frac{2}{2-s}}}
\norm{ \Lam^m v}_{L^2} \\
& \lesssim \norm{ \nabla u}_{H^s}
\norm{ \te}_{H^m}
\norm{ \Lam^m v}_{L^2}
+
\norm{ \Lambda^{m-s+1} u}_{L^2}
\norm{ \Lambda^s \te}_{L^2}
\norm{ \Lam^m v}_{L^2} \\
& \lesssim \norm{ \nabla u}_{H^s}
\norm{ \te}_{H^m}
\norm{ \Lam^m v}_{L^2}
+
\norm{\nabla u }_{H^m}
\norm{ \Lambda^s \te}_{L^2}
\norm{ \Lam^m v}_{L^2}
\lesssim B_s A_m B_m + B_m B_s A_m,\\
M_3 
&\lesssim \norm{ \Lam^m \theta}_{L^2}
\norm{ \nabla u}_{L^{\infty}}
\norm{ \Lam^{m-1} v}_{L^2}
\lesssim \norm{ \Lam^m \theta}_{L^2}
\norm{ \nabla u}_{H^s}
\norm{ v}_{H^m}
\lesssim A_m B_s B_m.
\end{aligned}
\end{equation*}
It is easy to see that
\begin{equation*}
\cJ_{14}= 
\left|\, 2\eta\int_{\bR^2}  (v \cdot \nabla)u \cdot \nabla \theta\, \right|
\lesssim 
\norm{v}_{L^{\infty}}
\norm{ \nabla u}_{L^2}
\norm{ \nabla \te}_{L^2}
\lesssim 
\norm{v}_{H^m}
\norm{ \nabla u}_{L^2}
\norm{ \te}_{H^m}
\lesssim B_m B_s A_m.
\end{equation*}

We combine all these estimates and finally use Young's inequality to conclude that
\[
    \sum_{j=1}^{13}\cJ_k 
    \leq C(B_s A_m B_m + B_s^2A_m^2) + \frac{1}{4}B_m(t)^2
    \leq C(B_s^2 A_m^2) + \frac{1}{2}B_m(t)^2.
\]
We can hide the term $B_m(t)^2$ on the right-hand side, and \eqref{E78} is shown.
The proof is complete.
\end{proof}

\subsection{Temporal decay estimates}
In this section, we derive temporal decay estimates for solutions in Sobolev spaces of higher regularity. We shall use the temporal decay estimates derived in Section~\ref{sec_3.2}.
\begin{proposition}\label{P35}
Let $\alpha \ge 0$ and $s \in(1,2)$, and suppose
that the initial data $(u_0,v_0,\theta_0)$ belong to $\cC^{\infty}_0(\bR^2)$.  For each $m \geq s$, the global solution $(u,v,\te)$ of the Cauchy problem \eqref{E11} constructed in Proposition~\ref{key_prop} enjoys the estimate:
\begin{equation}
\label{m-1:decay}
\norm{\Lambda^{m-1} u(t)}_{L^2}
+ \norm{\Lambda^{m-1} v(t)}_{L^2}
+ \norm{\Lambda^{m-1} \theta(t)}_{L^2}
\le C (1 + t)^{- \frac{m-1}{2}}\quad \text{for all }t\geq0,
\end{equation}
where the positive constant $C>0$ which depends only on $s,m,\al,\be,\lomu, \|\mu'\|_{C^{m-1}}$, and $\|(u_0,v_0,\te_0)\|_{H^m}$.
\end{proposition}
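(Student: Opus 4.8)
The plan is to run, at the regularity level $m$, the same dissipative scheme that produced Lemma~\ref{L33}, but to bound the nonlinearity using the lower-order decay already supplied by Proposition~\ref{key_prop} (namely \eqref{s-1:decay} and \eqref{E57}) together with the uniform bounds of Proposition~\ref{P33}. This yields a scalar differential inequality that I close by interpolation and solve explicitly. Concretely, I would first reproduce the derivation of \eqref{H^s_temp} with $s$ replaced by $m$: apply $\Lambda^{2m}$ and $\Lambda^{2(m-1)}$ to \eqref{E11}$_1$--\eqref{E11}$_3$, add the cross term coming from \eqref{E31a} at level $m-1$ multiplied by a small $\kappa>0$, and set
\[
X_m(t):=\Big(\|\Lambda^m u\|_{L^2}^2+\|\Lambda^m v\|_{L^2}^2+\|\Lambda^m\theta\|_{L^2}^2+\|\Lambda^{m-1}u\|_{L^2}^2+\|\Lambda^{m-1}v\|_{L^2}^2+\|\Lambda^{m-1}\theta\|_{L^2}^2-\kappa\int_{\bR^2}\Lambda^{m-1}v\cdot\Lambda^{m-1}\nabla\theta\Big)^{\frac12},
\]
\[
Y_m(t):=\Big(\|\Lambda^{m+1}u\|_{L^2}^2+\|\Lambda^m u\|_{L^2}^2+\alpha\|\Lambda^{m-1}u\|_{L^2}^2+\|\Lambda^m v\|_{L^2}^2+\|\Lambda^{m-1}v\|_{L^2}^2+\|\Lambda^m\theta\|_{L^2}^2\Big)^{\frac12},
\]
exactly the level-$m$ analogues of $X_s,Y_s$, so that $\tfrac12\frac{\ud}{\dt}X_m^2+Y_m^2\le\sum_k\cI_k$ with the same list of nonlinear terms as in \eqref{H^s_temp}.

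The decisive step is the nonlinear bound. Unlike Lemma~\ref{L33}, I cannot use smallness of top-order norms; instead every nonlinear term must be split so that its single \emph{low-order} factor ($\|\nabla u\|_{L^2}$, $\|v\|_{L^\infty}$, $\|\nabla\theta\|_{L^2}$, or $\|\Lambda^s(\cdot)\|_{L^2}$) is bounded by $\lambda^{-1}B_s(t)$, while the two remaining high-order factors land in $Y_m$ and $X_m$. This is precisely the bookkeeping already carried out for $\cJ_1$--$\cJ_{14}$ in the proof of Proposition~\ref{P33} (for the $\Lambda^m$ pieces and the viscosity commutators $\Lambda^m(\mu(\theta)\nabla u)-\mu(\theta)\Lambda^m\nabla u$) combined with the estimates $\cI_5$--$\cI_8$ of Lemma~\ref{L33} (for the $\Lambda^{m-1}$ pieces); the troublesome terms $\cI_{11},\cI_{13}$ lacking an $L^\infty$ bound on $u$ are again handled by the cancellation and integration by parts of \eqref{Power1}. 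Young's inequality then gives $\sum_k\cI_k\le\tfrac12 Y_m^2+C\,B_s(t)^2\,X_m^2$, hence
\[
\frac{\ud}{\dt}X_m^2+Y_m^2\le C\,B_s(t)^2\,X_m^2 .
\]

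Next I establish the interpolation $X_m(t)^{\frac{2m}{m-1}}\le \tfrac1K\,Y_m(t)^2$, using that $X_m$ is comparable to the clean energy (since $\kappa$ is small). The $v$-slot sits in both $X_m$ and $Y_m$, so it is immediate; the $\theta$-slot uses $\|\Lambda^{m-1}\theta\|_{L^2}^2\le\|\Lambda^m\theta\|_{L^2}^{2(m-1)/m}\|\theta\|_{L^2}^{2/m}$ with $\|\theta\|_{L^2}$ bounded by Proposition~\ref{P33}; and the $u$-slot is either absorbed directly into the $\alpha\|\Lambda^{m-1}u\|_{L^2}^2$ term of $Y_m$ when $\alpha>0$, or, when $\alpha=0$, treated by $\|\Lambda^{m-1}u\|_{L^2}^2\le\|\Lambda^m u\|_{L^2}^{2(m-1)/m}\|u\|_{L^2}^{2/m}$ with $\|u\|_{L^2}$ bounded. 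Plugging this into the inequality above yields the autonomous-in-form differential inequality
\[
\frac{\ud}{\dt}X_m^2+K\,X_m^{\frac{2m}{m-1}}\le C\,B_s(t)^2\,X_m^2 .
\]
Since $\int_0^\infty B_s(t)^2\,\dt<\infty$ by \eqref{small:no_damping}--\eqref{small:damped}, the integrating factor $G(t)=\exp\!\big(C\int_0^t B_s^2\big)$ is bounded and $\ge1$, so $z:=X_m^2/G$ satisfies $z'\le-\tfrac{K}{2}z^{m/(m-1)}$; integrating gives $z(t)\le C(1+t)^{-(m-1)}$, whence $X_m(t)\le C(1+t)^{-(m-1)/2}$ and, as $\|\Lambda^{m-1}(u,v,\theta)\|_{L^2}\le 2X_m(t)$, the claim \eqref{m-1:decay}.

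The main obstacle is the nonlinear estimate of the previous paragraph: at high regularity the smallness argument of Lemma~\ref{L33} is unavailable, and one must verify that \emph{every} term--including the temperature-dependent viscosity commutators and the pressure/transport terms treated by \eqref{Power1}--admits a splitting in which the low-order factor is exactly the time-integrable $B_s(t)$ and the two high-order factors are controlled by $Y_m$ and $X_m$; only this precise distribution of derivatives makes the Grönwall step with $\int_0^\infty B_s^2<\infty$ and the subsequent ODE argument go through.
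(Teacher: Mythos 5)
Your overall architecture (the level-$m$ energies $X_m,Y_m$ with the $\kappa$-cross term, the cancellation \eqref{Power1} for the transport/cross terms, and the interpolation $X_m^{2m/(m-1)}\lesssim Y_m^2$) coincides with the paper's, but you close the argument by a genuinely different mechanism. The paper does \emph{not} aim for $\frac{\ud}{\dt}X_m^2+Y_m^2\le C B_s^2X_m^2$; instead it exploits the \emph{pointwise} decay $\|\Lambda^{s-1}(u,v,\te)\|_{L^2}\lesssim(1+t)^{-(s-1)/2}$ from \eqref{s-1:decay}--\eqref{E57} to convert the low-order factors into explicit powers of $(1+t)$, arriving at $\frac{\ud}{\dt}X_m^2+Y_m^2\le C_1(1+t)^{-m}+C_2(1+t)^{-\delta}Y_m^2$; it then waits until $t\ge T_1$ to absorb the second term and solves the forced ODE $y'+C_0y^{m/(m-1)}\le C_1(1+t)^{-m}$ by monitoring $(1+t)^my(t)$. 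You instead use only the square-integrability of $B_s$ plus an integrating factor, which, if the nonlinear estimate holds, gives the cleaner autonomous inequality $z'\le -Kz^{m/(m-1)}$ and the same rate. Your route avoids the forcing term and the two-case ODE analysis; the paper's route avoids having to place a full power of $Y_m$ in every term, because leftover low-order powers can always be dumped into $(1+t)^{-m}$.

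The one place you must be careful is the nonlinear estimate itself, and your proposed sources do not transfer literally: the bounds for $\cI_5$--$\cI_8$, $\cI_{11}$ in Lemma~\ref{L33} use the $\epsilon$-smallness of the \emph{top-order} norms $\|\Lambda^s u\|,\|\Lambda^{s+1}u\|,\|\Lambda^s\te\|$, which at level $m$ become $\|\Lambda^m u\|,\|\Lambda^{m+1}u\|,\|\Lambda^m\te\|$ and are not small. The real danger is the slots of $X_m$ absent from $Y_m$, namely $\|\Lambda^{m-1}\te\|_{L^2}$ (and $\|\Lambda^{m-1}u\|_{L^2}$ when $\al=0$): a naive H\"older split of, say, $\cJ_{11}$ gives $\|\nabla u\|_{L^\infty}\|\Lambda^{m-1}\te\|_{L^2}^2\lesssim B_sX_m^2$, and $B_s$ is only in $L^2_t$, not $L^1_t$, so Gr\"onwall fails. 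The fix is to never let such a slot appear quadratically at $L^2$: use the $L^{4/3}\times L^4$ form of the commutator estimate and $\|\Lambda^{m-1}f\|_{L^4}^2\lesssim\|\Lambda^{m-1}f\|_{L^2}\|\Lambda^m f\|_{L^2}$ to push half a derivative into $Y_m$, giving $B_sX_mY_m$. With this redistribution (and analogous interpolations $\|\Lambda^{m-s+1}f\|\le\|\Lambda^{m-1}f\|^{s-1}\|\Lambda^m f\|^{2-s}$ for the viscosity commutators) every term comes out as $B_s X_mY_m$ or as $h(t)X_m^2+\tfrac18Y_m^2$ with $h\in\{B_s^2,B_s^4,B_s^{2/(s-1)}\}$; the latter exponents exceed $2$, so $h$ is still integrable once one notes $B_s\in L^\infty_t$ (which follows from Proposition~\ref{P33}). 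So your scheme does close, but only after this verification, which is precisely the step your write-up defers.
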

\begin{proof} We remark that the solution $(u,v,\te)$ is smooth due to Proposition~\ref{P33}.
The following identity is obtained by substituting $m$ for 
$s$ in the expression previously used in \eqref{H^s_temp}.
\begin{equation}
\label{H^m_temportal}
\begin{split}
&\frac{1}{2} \frac{\ud}{\dt}
\left( \|\Lambda^m u\|_{L^2}^2
+ \|\Lambda^m v\|_{L^2}^2
+ \|\Lambda^m \te\|_{L^2}^2 
+ \|\Lambda^{m-1} u\|_{L^2}^2
+ \|\Lambda^{m-1} v\|_{L^2}^2
+\|\Lambda^{m-1} \te \|_{L^2}^2
-\kappa \int_{\bR^2}\Lambda^{m-1}v \cdot \Lambda^{m-1}\nabla \te
\,\right) \\
&\quad + \left(\alpha \|\Lambda^m u\|_{L^2}^2
+ \lomu\|\Lambda^m \nabla u\|_{L^2}^2 
+ \frac{\beta}{2} ||\Lambda^m v\|_{L^2}^2 
+ \alpha \|\Lambda^{m-1} u\|_{L^2}^2 
+ \lomu\|\Lambda^{m-1} \nabla u\|_{L^2}^2
+ \beta \|\Lambda^{m-1} v \|_{L^2}^2\right)
+ \frac{\kappa}{2}\|\Lambda^m \te\|_{L^2}^2
\\
&\leq - \int_{\bR^2} \Lambda^m ((u \cdot \nabla) u) \cdot \Lambda^m u
+ \int_{\bR^2} \Lambda^m (v \otimes  v) : \Lambda^m \nabla u 
- \int_{\bR^2} \Lambda^m ((u \cdot \nabla) v) \cdot \Lambda^m v
- \int_{\bR^2} \Lambda^m ((v \cdot \nabla) u) \cdot \Lambda^m v\\
&\quad - \int_{\bR^2} \Lambda^{m-1} ((u \cdot \nabla) u) \cdot \Lambda^{m-1} u
+ \int_{\bR^2} \Lambda^{m-1} (v \otimes  v) : \Lambda^{m-1} \nabla u 
- \int_{\bR^2} \Lambda^{m-1} ((u \cdot \nabla) v) \cdot \Lambda^{m-1} v\\
&
\quad - \int_{\bR^2} \Lambda^{m-1} ((v \cdot \nabla) u) \cdot \Lambda^{m-1} v  - \int_{\bR^2} \Lambda^m (u \cdot \nabla \te) \Lambda^m \te -\int_{\bR^2}\left(\Lam^m (\mu(\te)\nabla u) - \mu(\te)\Lam^m \nabla u \right):\Lam^m \nabla u\\
&
\quad - \int_{\bR^2} \Lambda^{m-1} (u \cdot \nabla \te) \Lambda^{m-1} \te -\int_{\bR^2}\left(\Lam^{m-1} (\mu(\te)\nabla u) - \mu(\te)\Lam^{m-1} \nabla u \right):\Lam^{m-1} \nabla u\\
&\quad+\kappa \int_{\bR^2} \Lam^{m-1} (u \cdot \nabla)v \cdot \Lam^{m-1}\nabla \theta
+\kappa \int_{\bR^2} \Lam^{m-1} (v \cdot \nabla)u \cdot \Lam^{m-1}\nabla \theta 
-\kappa \int_{\bR^2} \Lam^{m-1} ( u \cdot \nabla \theta) \cdot \Lam^{m-1}\dv v\\
& =: \sum_{k=1}^{15}\cJ_k.
\end{split}
\end{equation}

For each time $t>0$, we define $X_m(t)$ and $Y_m(t)$ by
\[
X_m(t) := \left( \|\Lambda^m u\|_{L^2}^2
+ \|\Lambda^m v\|_{L^2}^2
+ \|\Lambda^m \te\|_{L^2}^2 
+ \|\Lambda^{m-1} u\|_{L^2}^2
+ \|\Lambda^{m-1} v\|_{L^2}^2
+\|\Lambda^{m-1} \te \|_{L^2}^2 -\kappa\int_{\bR^2} \Lam^{m-1} v \cdot \Lam^{m-1}\nabla \theta\right)^{\frac{1}{2}}(t) 
\]
and
\[
Y_m(t) := \lambda\left(
\|\Lambda^{m+1} u\|_{L^2}^2 
+ \|\Lambda^{m} u\|_{L^2}^2
+ \alpha \|\Lambda^{m-1} u\|_{L^2}^2 
+  \|\Lambda^m v\|_{L^2}^2 
+  \|\Lambda^{m-1} v \|_{L^2}^2 + \|\Lambda^m\te\|_{L^2}^2\right)^{\frac{1}{2}}(t). 
\]
We claim that there holds
\begin{equation}
\label{claim_m}
\frac{\ud}{\dt}X_m^2 + Y_m^2 \leq \frac{C_1}{(1+t)^m} + \frac{C_2}{(1+t)^{\delta}}Y_m^2,
\end{equation}
for some fixed positive real number $\delta>0$.
To prove the claim, we estimate the nonlinear terms $\cJ_k$ in \eqref{H^m_temportal}. 
To this end, we use the temporal decay estimates of \eqref{s-1:decay}, \eqref{E57}, and several interpolations for the case $1 < s < 2$:
\begin{align*}
\|\Lambda^{s-1}(u,v,\te)\|_{L^2}&\leq C (1+t)^{-(s-1)/2},\\
\|\Lambda^{s}(u,v,\te)\|_{L^2}&\leq C (1+t)^{-(s-1)/2},\\
\|\nabla (u,v,\te)\|_{L^2}&\leq 
C\left(
\|\Lambda^{s-1}(u,v,\te)\|_{L^2}
+\|\Lambda^{s}(u,v,\te)\|_{L^2}\right) 
\leq C (1+t)^{-(s-1)/2},\\
\|(u,v,\te)\|_{L^{\infty}}&\leq 
C\left(
\|\Lambda^{s-1}(u,v,\te)\|_{L^2}
+\|\Lambda^{s}(u,v,\te)\|_{L^2}\right) 
\leq C (1+t)^{-(s-1)/2}.
\end{align*}
For simplicity, we restrict our attention to the undamped case in this proof, noting that the same argument carries over to the damped case.

Now, let us begin with the terms $\cJ_1, \cJ_2$, and $\cJ_6$. Simply, we can show that
\begin{align*}
\cJ_1
&\lesssim \norm{\nabla u}_{L^2}
\norm{\Lam^m u}_{L^4}^2
\lesssim \lt(\norm{\Lambda^{s-1} u}_{L^2}+
\norm{\Lambda^{s} u}_{L^2}\rt)
\norm{\Lam^{m+1} u}_{L^2}
\norm{\Lam^m u}_{L^2}
\lesssim (1+t)^{-(s-1)/2}Y_m^2, \\
\cJ_2
&\lesssim \norm{\Lam^m v}_{L^2} 
\norm{v}_{L^\infty} 
\norm{\Lam^{m+1} u}_{L^2} 
\lesssim 
\norm{\Lam^m v}_{L^2} 
(\norm{\Lambda^{s-1}v}_{L^2}+
\norm{\Lambda^{s}v}_{L^2})
\norm{\Lam^{m+1} u}_{L^2}
\lesssim
(1+t)^{-(s-1)/2}Y_m^2, \\
\cJ_6
&\lesssim \norm{\Lam^{m-1} v}_{L^2}
\norm{v}_{L^\infty}
\norm{\Lam^m u}_{L^2}
\lesssim \norm{\Lam^{m-1} v}_{L^2}
(\norm{\Lambda^{s-1}v}_{L^2}+\norm{\Lambda^{s}v}_{L^2})  
\norm{\Lam^{m} u}_{L^2}
\lesssim (1+t)^{-(s-1)/2}Y_m^2.
\end{align*}

To estimate the term $\cJ_3,\cJ_4,\cJ_9$, and $\cJ_{10}$, we use Lemma~\ref{L22} to obtain
\[
    \|\nabla u \|_{L^{\infty}}\lesssim \|\Lambda^{s-1}u\|_{L^2}^{a_1} \|\Lambda^{m+1}u\|_{L^2}^{1-a_1} \lesssim (1+t)^{-\frac{a_1(s-1)}{2}}Y_{m}^{1-a_2},
\]
where we denote $a_1=\frac{m-1}{m-s+2}$.
Moreover, it follows from Proposition~\ref{P33} that 
\[
    X_m(t) \lesssim A_m(t) \lesssim A_m(0).
\]
Hence, for the term $\cJ_3$, we have
\begin{align*}
\cJ_3
& 
\lesssim \norm{\nabla u}_{L^\infty}
\norm{\Lambda^m v}_{L^2}^2
+ \norm{\Lambda^m u}_{L^{\frac{2}{s-1}}}
\norm{\nabla v}_{L^{\frac{2}{2-s}}}
\norm{\Lambda^m v}_{L^2}
\\
&\lesssim (\norm{\Lambda^{s-1}u}_{L^2}^{a_1}
\norm{\Lambda^{m+1} u}_{L^2}^{1-a_1})
\norm{\Lambda^m v}_{L^2}^{1-a_1} 
\norm{\Lambda^m v}_{L^2}^{1+a_1}
+
(\norm{\Lambda^{m+1} u}_{L^2}^{2-s}
\norm{\Lambda^m u}_{L^2}^{s-1})
\norm{\Lambda^s v}_{L^2}
\norm{\Lambda^m v}_{L^2}\\
&\lesssim
(1+t)^{-\frac{a_1(s-1)}{2}}Y_m^{1-a_1}X_m^{1-a_1}Y_m^{1+a_1} + (1+t)^{-(s-1)/2} Y_m^2
\lesssim 
(1+t)^{-\frac{a_1(s-1)}{2}} Y_m^2.
\end{align*}
Proceeding similarly, we obtain the following:
\begin{align*}
\cJ_4
&\lesssim \norm{\Lam^m v}_{L^2}^2
\norm{\nabla u}_{L^\infty}
+ \norm{v}_{L^\infty}
\norm{\Lam^{m+1} u}_{L^2}
\norm{\Lam^m v}_{L^2} 
\lesssim 
(1+t)^{-\frac{a_1(s-1)}{2}} Y_m^2, \\
\cJ_9
&
\lesssim \norm{\nabla u}_{L^\infty}
\norm{\Lambda^m \theta}_{L^2}^2
+ \norm{\Lambda^m u}_{L^{\frac{2}{s-1}}}
\norm{\nabla \theta}_{L^{\frac{2}{2-s}}}
\norm{\Lambda^m \theta}_{L^2} 
\lesssim
(1+t)^{-\frac{a_1(s-1)}{2}} Y_m^2,\\
\cJ_{10}
&\lesssim \norm{\nabla \mu (\theta)}_{L^\frac{2}{2-s}}
\norm{\Lam^{m} u}_{L^\frac{2}{s-1}}
\norm{\Lam^{m+1} u}_{L^2}
+ \norm{\Lam^m (\mu(\te)-\mu(0))}_{L^2}
\norm{\nabla u}_{L^\infty}
\norm{\Lam^{m+1} u}_{L^2} \\
&\lesssim \norm{\nabla \theta}_{L^\frac{2}{s-2}}
\norm{\Lam^m u}_{L^\frac{2}{s-1}}
\norm{\Lam^{m+1} u}_{L^2}
+ (1+\|\nabla \te\|_{L^2}^{\lceil m-1 \rceil})
\norm{\Lam^m \theta}_{L^2}
\norm{\nabla u}_{L^{\infty}}
\norm{\Lam^{m+1} u}_{L^2}\\
&\lesssim 
(1+t)^{-(s-1)/2} Y_m^2 + 
(1+\norm{\theta}_{H^s}^{\lceil m-1 \rceil})(1+t)^{-\frac{a_1(s-1)}{2}}Y_m^2 \lesssim (1+t)^{-\frac{a_1(s-1)}{2}}Y_m^2,
\end{align*}
where we used \eqref{E70} in $\cJ_{10}$.

For the terms $\cJ_5, \cJ_7, \cJ_8$, and $\cJ_{11}$, we use the bounds
\[
    \norm{\nabla f}_{L^2} 
    \leq 
        \norm{\Lambda^{s-1}f}_{L^2}^{a_2}
            \norm{\Lambda^{m}f}_{L^2}^{1-a_2}
\quad \text{and} \quad
    \norm{\Lambda^{m-\frac{1}{2}}g}_{L^2} 
    \leq 
        \norm{\Lambda^{s-1}g}_{L^2}^{a_3}
            \norm{\Lambda^{m}g}_{L^2}^{1-a_3},
\]
where we denote $a_2 = \frac{m-1}{m-s+1}$ and $a_3 = \frac{1}{2(m-s+1)}$. Thus, by Young's inequality, it follows that
\begin{align*}
\cJ_5
&
\lesssim \norm{\nabla u}_{L^2}
\norm{\Lam^{m-1} u}_{L^4}^2 
\lesssim (\norm{\Lam^{s-1} u}_{L^2}^{a_2}
\norm{\Lam^{m} u}_{L^2}^{1-a_2})
(\norm{\Lam^{s-1} u}_{L^2}^{a_3}
\norm{\Lam^{m} u}_{L^2}^{1-a_3})^2
\\
&\le C
\norm{\Lam^{s-1} u}_{L^2}^{\frac{m}{m-s+1}}
\norm{\Lam^{m} u}_{L^2}^{3-\frac{m}{m-s+1}}
\le C 
\norm{\Lam^{s-1} u}_{L^2}^{\frac{2m}{s-1}} + \frac{\underline{\mu}}{8}
\norm{\Lam^{m} u}_{L^2}^{2}
\le C
(1+t)^{-m} + \frac{1}{8}Y_m^2.
\end{align*}
Using an analogous computation, we arrive at the following conclusion:
\begin{align*}
\cJ_7 &\lesssim
\norm{\nabla u}_{L^2}
\norm{\Lam^{m-1} v}_{L^4}^2+
\norm{\Lam^{m-1} u}_{L^4}
\norm{\nabla v}_{L^{2}}
\norm{\Lam^{m-1} v}_{L^4} \le C
(1+t)^{-m} + \frac{1}{8}Y_m^2,\\
\cJ_8 & \lesssim
\norm{\nabla u}_{L^2}
\norm{\Lam^{m-1} v}_{L^4}^2+
\norm{v}_{L^{\infty}}
\norm{\Lam^{m} u}_{L^2}
\norm{\Lam^{m-1} v}_{L^2} \le C
(1+t)^{-m} + \frac{1}{8}Y_m^2 + C(1+t)^{-(s-1)/2}Y_m^2,\\
\cJ_{11}
&\lesssim
\norm{\nabla u}_{L^2} 
\norm{\Lam^{m-1} \te}_{L^4}
\norm{\Lam^{m-1} \te}_{L^4} 
+
\norm{\Lam^{m-1} u}_{L^4}
\norm{\nabla \te}_{L^2}
\norm{\Lam^{m-1} \te}_{L^4} 
\leq C(1+t)^{-m}+\frac{1}{8}Y_m^2.
\end{align*}

To bound $\cJ_{12}, \cJ_{13}, \cJ_{13}$, and $\cJ_{15}$, we use the bounds
\[
    \norm{\nabla f}_{L^{\frac{2}{2-s}}} \lesssim 
        \norm{\Lam^s f}_{L^2} \lesssim
        \norm{\nabla f}_{L^2}^{a_4}
            \norm{\Lambda^{m}f}_{L^2}^{1-a_4}
\quad \text{and} \quad
    \norm{\Lambda^{m-1}g}_{L^\frac{2}{s-1}} 
    \lesssim 
        \norm{\Lambda^{m-s+1}g}_{L^2}\lesssim
        \norm{\nabla g}_{L^2}^{1-a_4}
            \norm{\Lambda^{m}g}_{L^2}^{a_4},
\]
where we denote $a_4 = \frac{m-s}{m-1}$.
For the term $\cJ_{12},$ we apply \eqref{E70} and \eqref{E71} to obtain
\begin{align*}
\cJ_{12}& \lesssim 
\norm{\nabla \mu (\theta)}_{L^\frac{2}{2-s}}
\norm{\Lam^{m-1} u}_{L^{\frac{2}{s-1}}}
\norm{\Lam^m u}_{L^2} 
+
\norm{\Lam^{m-1} \mu (\theta)}_{L^\frac{2}{s-1}}
\norm{\nabla u}_{L^\frac{2}{2-s}}
\norm{\Lam^m u}_{L^2} \\
&\lesssim
\norm{\nabla \theta}_{L^\frac{2}{2-s}}
\norm{\Lam^{m-s+1} u}_{L^2}
\norm{\Lam^m u}_{L^2} 
+
\norm{\Lam^{m-s+1} (\mu(\te)-\mu(0))}_{L^2}
\norm{\Lam^s u}_{L^2}
\norm{\Lam^m u}_{L^2}\\
&\lesssim
\norm{\Lam^{s} \theta}_{L^2}
\norm{\Lam^{m-s+1} u}_{L^2}
\norm{\Lam^m u}_{L^2} 
+
(1+\|\nabla \te\|_{L^2}^{\lceil m-s \rceil})
\norm{\Lam^{m-s+1} \theta}_{L^2}
\norm{\Lam^s u}_{L^2}
\norm{\Lam^m u}_{L^2}\\
&\lesssim
(\norm{\nabla \theta}_{L^2}^{a_4}
\norm{\Lam^m \theta}_{L^2}^{1-a_4})
(\norm{\nabla u}_{L^2}^{1-a_4}
\norm{\Lam^{m} u}_{L^2}^{a_4})
\norm{\Lam^m u}_{L^2} 
+
(\norm{\nabla \theta}_{L^2}^{1-a_4}
\norm{\Lam^m \theta}_{L^2}^{a_4})
(\norm{\nabla u}_{L^2}^{a_4}
\norm{\Lam^{m} u}_{L^2}^{1-a_4})
\norm{\Lam^m u}_{L^2}\\
& \lesssim (1+t)^{-(s-1)/2}Y_m^2.
\end{align*}
Similarly, it follows that
\[
\cJ_{14}
\lesssim 
\norm{v}_{L^\infty}
\norm{\Lam^m u}_{L^2}
\norm{\Lam^m \theta}_{L^2} + 
\norm{\Lam^{m-1} v}_{L^{\frac{2}{s-1}}}
\norm{\nabla u}_{L^{\frac{2}{2-s}}}
\norm{\Lam^{m} \te}_{L^2}
\lesssim (1+t)^{-(s-1)/2}Y_m^2. 
\]
The term $\cJ_{13}+ \cJ_{15}$ can be represented, similarly to \eqref{E47a}, as follows:
\begin{align*}
\cJ_{13}+\cJ_{15}=
&\int_{\bR^2} \left( \Lam^{m-1} (u_i \p_i v_j) 
- u_i \p_i \Lam^{m-1} v_j \right) \p_j \Lam^{m-1} \theta
-  \int_{\bR^2} \left( \Lam^{m-1} (u_i \p_i \theta)
- u_i \p_i \Lam^{m-1} \theta \right) \Lam^{m-1} \p_j v_j \\
&\quad + \int_{\bR^2}  (\p_i \Lam^{m-1} \theta) (\p_j u_i)(\Lam^{m-1} v_j)
=: R_1 + R_2 + R_3.
\end{align*}
Then, these terms can also be computed similarly by
\begin{align*}
    R_1
&\lesssim
\norm{\nabla u}_{L^{\frac{2}{2-s}}}
\norm{\Lam^{m-1} v}_{L^{\frac{2}{s-1}}}
\norm{\Lam^{m} \te}_{L^2}
+
\norm{\Lam^{m-1} u}_{L^{\frac{2}{s-1}}}
\norm{\nabla v}_{L^{\frac{2}{2-s}}}
\norm{\Lam^{m} \te}_{L^2}
\lesssim (1+t)^{-(s-1)/2}Y_m^2,\\
R_2
&\lesssim
\norm{\nabla u}_{L^{\frac{2}{2-s}}}
\norm{\Lam^{m-1} \te}_{L^{\frac{2}{s-1}}}
\norm{\Lam^{m} v}_{L^2}
+
\norm{\Lam^{m-1} u}_{L^{\frac{2}{s-1}}}
\norm{\nabla \te}_{L^{\frac{2}{2-s}}}
\norm{\Lam^{m} v}_{L^2}
\lesssim (1+t)^{-(s-1)/2}Y_m^2,\\
    R_3
&\lesssim
\norm{\Lam^{m} \te}_{L^2}
\norm{\nabla u}_{L^{\frac{2}{2-s}}}
\norm{\Lam^{m-1} v}_{L^{\frac{2}{s-1}}}
\lesssim (1+t)^{-(s-1)/2}Y_m^2.
\end{align*}
Combining these together, the nonlinear terms $\sum_{k=1}^{15}\cJ_{k}$ are bounded in the form of $\frac{C_1}{(1+t)^m} + \frac{C_2}{(1+t)^{\delta}}Y_m^2 + \frac{1}{2}Y_m^2$, and thus the claim~\eqref{claim_m} is shown.

Now, we are ready to establish the temporal decay estimates \eqref{m-1:decay}.
Assuming \eqref{claim_m}, i.e.
\[
\frac{\ud}{\dt}X_m^2 + Y_m^2 \leq \frac{C_1}{(1+t)^m} + \frac{C_2}{(1+t)^{\delta}}Y_m^2\quad \text{for some } \delta>0,
\]
we choose a time $T_1>0$ such that $\frac{C_2}{(1+T_1)^{\delta}}<\frac{1}{2}$. 
By using Gagliardo--Nirenberg inequality in Lemma~\ref{L22}, it can be easily shown that
\[
    2C_0(X_m^2)^{\frac{m}{m-1}}\leq Y_m^2,
\]
for some positive constant $C_0$.
The choice of $T_1$ leads to that
\[
\frac{\ud}{\dt}X_m^2 + C_0(X_m^2)^{\frac{m}{m-1}} \leq \frac{C_1}{(1+t)^m},    \quad \text{if }t\geq T_1.
\]
To simplify notation, we set $y(t):=X_m(t)^2$. 
Note that
\begin{align*}
        \lt((1+t)^{m}y(t)\rt)'&
        =m(1+t)^{m-1}y(t) + (1+t)^m\lt(y'(t) + C_0(y(t))^{\frac{m}{m-1}}\rt)-C_0(1+t)^my(t)^{\frac{m}{m-1}}\\
        & \leq (1+t)^my(t)\lt(\frac{m}{1+t}-C_0(y(t))^{\frac{1}{m-1}}\rt)+C_1.
\end{align*}
Now, we divide into two cases:
    \begin{enumerate}
        \item [(i)] $\frac{m}{1+t}-C_0(y(t))^{\frac{1}{m-1}}\geq 0$ : Then, this implies that $y(t) \leq \lt(\frac{m}{C_0(1+t)}\rt)^{m-1}$ and this yields that
        \begin{equation*}
            \lt((1+t)^{m}y(t)\rt)'\leq (1+t)^my(t)\lt(\frac{m}{1+t}-C_0(y(t))^{\frac{1}{m-1}}\rt)+C_1 \leq \frac{m^m}{C_0^{m-1}}+C_1.
        \end{equation*}
        \item [(ii)] $\frac{m}{1+t}-C_0(y(t))^{\frac{1}{m-1}} < 0$ : Then, it is clear that
               \begin{equation*}
            \lt((1+t)^{m}y(t)\rt)' \leq C_1.
        \end{equation*}
    \end{enumerate}
On the other hand, we can obtain the uniform bound of $X_m(t)$ in $[0,T_1]$ by
\begin{equation*}
    \sup_{t\in[0,T_1]}X_m(t) \lesssim \sup_{t\in[0,\infty)}A_m(t) <C_3\epsilon.
\end{equation*}
Combining these, we conclude that
\begin{equation*}
    X_m(t)^2 \leq C_4(1+t)^{-m+1},
\end{equation*}
where the constant $C_4$ can be explicitly bounded by $\max\lt(\frac{m^m}{C_0^{m-1}}+C_1+(C_3\ep)^2,\, (C_3\ep)^2(1+T_1)^{m-1} \rt)$. Notice that the constant $C_4$ depends solely on the parameters $s,\al,\be,\lomu, \|\mu'\|_{C^{m-1}}$ and $\|(u_0,v_0,\te_0)\|_{H^m}$.

We arrive at
\[
    \norm{\Lambda^{m-1} u(t)}_{L^2}
+ \norm{\Lambda^{m-1} v(t)}_{L^2}
+ \norm{\Lambda^{m-1} \theta(t)}_{L^2}
\leq 2X_m(t)
\le C (1 + t)^{- \frac{m-1}{2}}\quad \text{for all }t\geq0.
\]
\end{proof}

We close this section by stating a simple but useful lemma.
\begin{lemma}\label{Lem:ODE_2}
Let $y(t)$ be a nonnegative differentiable function that satisfies
\[
    y'(t) + ky(t)\leq \frac{C}{(1+t)^{a}},
\]
for some positive constants $k,a$ and $C$.
Then, there holds
\[
    y(t) \lesssim \frac{1}{(1+t)^a}.
\]
\end{lemma}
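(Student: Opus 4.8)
The plan is to treat the differential inequality by the integrating-factor (Duhamel) method and then estimate the resulting time integral by a dyadic-in-time splitting. This is essentially a quantitative Gr\"onwall argument, and the only genuine work lies in balancing the exponential weight against the prescribed polynomial decay.

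First, I would multiply the hypothesis $y'(t) + k y(t) \le C(1+t)^{-a}$ by the integrating factor $e^{kt}$, rewriting it as
\[
\frac{\ud}{\dt}\left(e^{kt} y(t)\right) \le C\, e^{kt}(1+t)^{-a}.
\]
Integrating over $[0,t]$ and using $y \ge 0$, this yields the pointwise bound
\[
y(t) \le e^{-kt} y(0) + C\, e^{-kt}\int_0^t e^{ks}(1+s)^{-a}\,\ud s.
\]
The first term decays exponentially, hence is $\lesssim (1+t)^{-a}$ for every fixed $a>0$, so the whole matter reduces to controlling the Duhamel integral $I(t) := e^{-kt}\int_0^t e^{ks}(1+s)^{-a}\,\ud s$.

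The key step is a splitting of the integral at the midpoint $s = t/2$, which isolates the competition between the exponential weight $e^{ks}$ and the polynomial decay $(1+s)^{-a}$. On the lower range $s\in[0,t/2]$ I would bound $(1+s)^{-a}\le 1$ and integrate the exponential, obtaining a contribution $\lesssim e^{-kt}\cdot k^{-1}e^{kt/2} = k^{-1}e^{-kt/2}$, which again decays exponentially and is therefore $\lesssim (1+t)^{-a}$. On the upper range $s\in[t/2,t]$ the polynomial factor is already comparable to the target, $(1+s)^{-a}\le (1+t/2)^{-a}\lesssim (1+t)^{-a}$, so pulling it out and integrating $e^{ks}$ over $[t/2,t]$ leaves a factor $\lesssim k^{-1}e^{kt}$, which cancels the prefactor $e^{-kt}$ and yields a contribution $\lesssim (1+t)^{-a}$. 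Adding the two ranges gives $I(t)\lesssim (1+t)^{-a}$, and combining with the exponentially small initial term completes the estimate $y(t)\lesssim (1+t)^{-a}$.

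I do not expect a genuine obstacle: this is a standard Duhamel-type estimate, and the only point requiring care is the dyadic splitting, whose sole purpose is to ensure that neither the exponentially small piece nor the polynomially decaying piece dominates the sharp rate $(1+t)^{-a}$. I would only remark that the implicit constant in $\lesssim$ depends on $k$, $a$, $C$, and $y(0)$, but is independent of $t$, which is exactly the uniformity needed when this lemma is invoked to close the decay estimates in the preceding propositions.
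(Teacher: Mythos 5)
Your proposal is correct and follows essentially the same route as the paper: an integrating-factor (Gr\"onwall/Duhamel) representation followed by splitting the convolution integral at $s=t/2$, with the lower range killed by the exponential weight and the upper range controlled by $(1+t/2)^{-a}\lesssim (1+t)^{-a}$. The only cosmetic difference is that on $[0,t/2]$ you bound the polynomial factor by $1$ and integrate the exponential, whereas the paper bounds the exponential by $e^{-kt/2}$ and integrates the polynomial; both yield the same exponentially small contribution.
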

\begin{proof}
    By Gr\"onwall's inequality, one can simply show that
\[
        y(t) \leq e^{-kt}y(0) + \int_0^t e^{-k(t-\tau)}\frac{C}{(1+\tau)^a}\ud \tau.
\]
Then, the result comes from
\begin{align*}
    \int_0^t e^{-k(t-\tau)}\frac{C}{(1+\tau)^a}\ud \tau & =\int_0^{t/2} e^{-k(t-\tau)}\frac{C}{(1+\tau)^{a}}\ud \tau
    +\int_{t/2}^t e^{-k(t-\tau)}\frac{C}{(1+\tau)^{a}}\ud \tau\\
    & \leq   
    e^{-kt/2}\int_0^{t/2} \frac{C}{(1+\tau)^{a}}\ud \tau
    + \frac{C}{(1+\frac{t}{2})^{a}}\int_{t/2}^t e^{-k(t-\tau)}\ud \tau =\mathcal{O}\lt(\frac{1}{(1+t)^{a}}\rt).
\end{align*}
\end{proof}


\subsection{Proof of main theorems}
\begin{proof}[Proof of Theorem ~ \ref{main_thm:no_damping} and Theorem ~ \ref{main_thm:damped}]
It suffices to consider the case $1<s<2$, since all the remaining cases are included in this range as well. Indeed, one can choose $\epsilon_1$ and $\epsilon_2$ as $\varepsilon^*$ in Proposition~\ref{key_prop}.
In the undamped case $\alpha = 0$, the estimates \eqref{E15} directly follows from Proposition~\ref{P33}. 
In the damped case $\alpha > 0$, we recall $L^2$-energy estimate
\begin{equation}
\label{E16a}
\frac{1}{2} \frac{\ud}{\dt}
\left(\|u\|_{L^2}^2 
+ \|v\|_{L^2}^2
+ \|\theta\|_{L^2}^2 \right) 
+ \int_{\mathbb{R}^2}\mu(\theta)|\nabla u|^2
+ \alpha \|u\|_{L^2}^2 +\beta\|v\|_{L^2}^2 = 0.
\end{equation}
Then the estimate \eqref{E16} follows from the Proposition ~ \ref{P33} together with the $L^2$-energy estimate \eqref{E16a}.

Now, it only remains to establish the temporal decay estimates \eqref{E44} and \eqref{E45}. For any positive number $\gamma >0$, by Proposition ~ \ref{P35}, we have
\begin{equation}
\label{gamma:decay}
\norm{\Lambda^{\gamma} u(t)}_{L^2}
+ \norm{\Lambda^{\gamma} v(t)}_{L^2}
+ \norm{\Lambda^{\gamma} \theta(t)}_{L^2}
\le C (1 + t)^{- \frac{\gamma}{2}}\quad \text{for all }t\geq0.
\end{equation}
The decay rate of $u$ and $v$ is shown to be further improvable.

We multiply $v$ to $\eqref{E11}_2$ and integrate over $\bR^2$ to obtain
\begin{align*}
    \frac{1}{2}\frac{\ud}{\dt}  \|\Lambda^{\gamma}v\|_{L^2}^2
   + \beta \|\Lambda^{\gamma}v\|_{L^2}^2
  &=  \int_{\bR^2}\Lambda^{\gamma}\nabla \te\cdot \Lambda^{\gamma}v -\int_{\bR^2}\Lambda^{\gamma}(v\cdot \nabla) u\cdot \Lambda^{\gamma}v -\int_{\bR^2}\Lambda^{\gamma}(u\cdot \nabla) v\cdot \Lambda^{\gamma}v\\
  &=: \cJ_1 + \cJ_2 + \cJ_3. 
\end{align*}
By Young's inequality with \eqref{gamma:decay}, we see that
\begin{align*}
        \cJ_1 
        &\leq \|\Lambda^{\gamma+1}\te\|_{L^2}
        \|\Lambda^{\gamma}v\|_{L^2}
        \leq C(1+t)^{-(\gamma+1)} + 
        \frac{\beta}{5}\|\Lambda^{\gamma}v\|_{L^2}^2,\\
        \cJ_2 
        &\leq \|\Lambda^{\gamma}v\|_{L^2}^2
        \|\nabla u\|_{L^{\infty}} + 
        \|v\|_{L^{\infty}}
        \|\Lambda^{\gamma}v\|_{L^2}
        \|\Lambda^{\gamma+1}u\|_{L^2}\\
        &\leq C(1+t)^{-1}\|\Lambda^{\gamma}v\|_{L^2}^2 + 
        C(1+t)^{-(\gamma+2)} + 
        \frac{\beta}{5}\|\Lambda^{\gamma}v\|_{L^2}^2,
        \\
        \cJ_3
        &\leq  \left|\int_{\bR^2}(\Lambda^{\gamma}(u\cdot \nabla)v-(u\cdot \nabla)\Lambda^{\gamma}v) \cdot \Lambda^{\gamma}v \, \right|\\
        &\leq \|\Lambda^{\gamma}v\|_{L^2}^2
        \|\nabla u\|_{L^{\infty}} + 
        \|\nabla v\|_{L^{\infty}}
        \|\Lambda^{\gamma}v\|_{L^2}
        \|\Lambda^{\gamma}u\|_{L^2}
        \\
        &\leq C(1+t)^{-1}\|\Lambda^{\gamma}v\|_{L^2}^2 + 
        C(1+t)^{-(\gamma+2)} + 
        \frac{\beta}{5}\|\Lambda^{\gamma}v\|_{L^2}^2.
\end{align*}
Hence, if $t$ is sufficiently large, it follows that
\[
\frac{1}{2}\frac{\ud}{\dt}  \|\Lambda^{\gamma} v\|_{L^2}^2
+ \frac{\beta}{5} \|\Lambda^{\gamma} v\|_{L^2}^2 \lesssim  (1+t)^{-(\gamma+1)} + (1+t)^{-(\gamma+2)} \lesssim (1+t)^{-(\gamma+1)}.  
\]
Applying Lemma~\ref{Lem:ODE_2}, we obtain the decay estimate of $v$ by
\[
    \|\Lam^{\gamma} v(t)\|_{L^2}\leq C(1+t)^{-(\gamma+1)/2}.
\]

Next, we multiply $\Lambda^{2\gamma}u$ to $\eqref{E11}_1$ and integrate over $\bR^2$ to obtain
\begin{align*}
   &\frac{1}{2}\frac{\ud}{\dt}  \|\Lambda^{\gamma}
 u\|_{L^2}^2
   + \lomu \|\Lambda^{\gamma}\nabla u\|_{L^2}^2
    + \al \|\Lambda^{\gamma} u \|_{L^2}^2\\
    &\quad \leq \lt| \int_{\bR^2}  \Lambda^{\gamma}(u \cdot \nabla) u \cdot \Lambda^{\gamma}u \,\rt|  
    + \lt| \int_{\bR^2} \lt(\Lambda^{\gamma}(\mu(\theta)\nabla u) - \mu(\te)\Lambda^{\gamma}\nabla u\rt) : \Lambda^{\gamma}\nabla u\, \rt|
    + \lt| \int_{\bR^2} \Lambda^{\gamma}\dv (v \otimes v) \Lambda^{\gamma} u \, \rt|\\
    & \quad =: \cI_1 + \cI_2 + \cI_3.
\end{align*}
Then, one can obtain
\begin{align*}
\cI_1 &
\lesssim 
\|\nabla u\|_{L^2}
\|\Lambda^{\gamma} u\|_{L^4}^2
\lesssim 
\|\nabla u\|_{L^2}
\|\Lambda^{\gamma} u\|_{L^2}
\|\Lambda^{\gamma+1} u\|_{L^2}
\lesssim 
(1+t)^{-1/2}
\|\Lambda^{\gamma} u\|_{L^2}
\|\Lambda^{\gamma+1} u\|_{L^2}, \\
\cI_2 & 
\lesssim 
(1+\|\nabla\te\|_{L^2}^{\gamma})
\|\Lambda^{\gamma} \te\|_{L^2}
\|\nabla u \|_{L^{\infty}} 
\|\Lambda^{\gamma+1} u\|_{L^2} 
+ 
\|\nabla \te\|_{L^4}
\|\Lambda^{\gamma} u \|_{L^4} 
\|\Lambda^{\gamma+1} u\|_{L^2} \\
&\lesssim 
(1+t)^{-\gamma/2}
\|\nabla u \|_{L^{\infty}} 
\|\Lambda^{\gamma+1} u\|_{L^2} 
+ 
\|\Lam^{\frac{3}{2}} \te\|_{L^2}
\|\Lambda^{\gamma+\frac{1}{2}} u \|_{L^2} 
\|\Lambda^{\gamma+1} u\|_{L^2} \\ 
& \lesssim
(1+t)^{-\gamma}\|\nabla u \|_{L^{\infty}}^2 + \frac{\lomu}{2}\|\Lambda^{\gamma+1}u\|_{L^2}^2
+ (1+t)^{-\frac{3}{4}} (\|\Lambda^{\gamma}u\|_{L^2}
+\|\Lambda^{\gamma+1}u\|_{L^2})
\|\Lambda^{\gamma+1}u\|_{L^2}, \\
\cI_3 & 
\lesssim
\|\Lambda^{\gamma+1} v\|_{L^2}
\|v\|_{L^{\infty}}
\|\Lam^m u\|_{L^2}
\lesssim (1+t)^{-(\gamma+4)/2}
\|\Lam^m u\|_{L^2}
\leq C(1+t)^{-(\gamma+4)}
+ \frac{\alpha}{4} \|\Lam^m u\|_{L^2}^2.
\end{align*}
If $t$ is large enough, we have
\begin{equation}
\label{put:iteration}
    \frac{\ud}{\dt}  \|\Lambda^{\gamma} u\|_{L^2}^2
    + \frac{\al}{2} \|\Lambda^{\gamma} u \|_{L^2}^2
    \lesssim (1+t)^{-\gamma}\|\nabla u \|_{L^{\infty}}^2 + (1+t)^{-(\gamma+4)}.
\end{equation}
By observing $\|\nabla u \|_{L^{\infty}}^2 \lesssim(1+t)^{-2}$, we apply Lemma~\ref{Lem:ODE_2} to conclude that
\begin{equation}
\label{put:1}
     \|\Lambda^{\gamma} u(t)\|_{L^2} \leq C(1+t)^{-(\gamma+2)/2}. 
\end{equation}
In fact, the estimate \eqref{put:1} further implies $\|\nabla u \|_{L^{\infty}}^2 \lesssim(1+t)^{-4}$. We combine this with \eqref{put:iteration}
to deduce that 
\[
     \frac{\ud}{\dt}  \|\Lambda^{\gamma} u\|_{L^2}^2
    + \frac{\al}{2} \|\Lambda^{\gamma} u \|_{L^2}^2
    \lesssim (1+t)^{-(\gamma+4)}.
\]
We apply Lemma~\ref{Lem:ODE_2} again, and these yield the desired result \eqref{E44} and \eqref{E45}. The proof is complete.
\end{proof}

\appendix

\section{Auxiliary estimates} \label{Appen_B}

\begin{lemma}[$L^{\infty}$-bound, $\Lambda^s$-type interpolation.] \label{L23} Let $d$ be a positive integer.
    \begin{enumerate}
        \item[(i)] Let $s_1,s_2$ and $s$ be nonnegative real numbers satisfying $0\leq s_1<s<s_2$.
    Suppose $u \in H^{s_2}(\mathbb{R}^d)$. Then, it holds that
\[
        \|\Lambda^su\|_{L^2(\bR^d)} \leq  \|\Lambda^{s_1}u\|_{L^2(\bR^d)}^{\frac{s_2-s}{s_2-s_1}} \|\Lambda^{s_2}u\|_{L^2(\bR^d)}^{\frac{s-s_1}{s_2-s_1}}.
\]
    \item[(ii)]  
    Let $s\geq0$, and let $s_1,s_2$ be two nonnegative real numbers satisfying $0\leq s_1<\frac{d}{2}$ and $\frac{d}{2}+s<s_2$.
Suppose $u \in H^{s_2}(\mathbb{R}^d)$. Then, there exists a constant $C=C(d,s,s_1,s_2)>0$  such that
\[
        \|\Lambda^s u\|_{L^{\infty}(\bR^d)} \leq C \|\Lambda^{s_1}u\|_{L^2(\bR^d)}^{\frac{s_2-s-\frac{d}{2}}{s_2-s_1}} \|\Lambda^{s_2}u\|_{L^2(\bR^d)}^{\frac{\frac{d}{2}+s-s_1}{s_2-s_1}}.
\]
In particular, there holds
\[
\|u\|_{L^{\infty}(\bR^d)} \leq C \|\Lambda^{s_1}u\|_{L^2(\bR^d)}^{\frac{s_2-\frac{d}{2}}{s_2-s_1}} \|\Lambda^{s_2}u\|_{L^2(\bR^d)}^{\frac{\frac{d}{2}-s_1}{s_2-s_1}}.
\]
    \end{enumerate}
\end{lemma}

\begin{proof}
\begin{enumerate}
\item[(i)]
We define the exponents $p$ and $q$ by
    \begin{equation*}
        \frac{1}{p} = \frac{s_2-s}{s_2-s_1}, \quad \frac{1}{q} = \frac{s-s_1}{s_2-s_1}.
    \end{equation*}
Then the following relations hold:
    \begin{equation*}
        \frac{1}{p}+\frac{1}{q}=1, \quad \frac{s_1}{p}+\frac{s_2}{q}=s.
    \end{equation*}
Applying H\"older's inequality, we deduce
\begin{equation*}
    \int_{\bR^d}|\xi|^{2s}|\hat{u}(\xi)|^2 \,\ud \xi= \int_{\bR^d}\lt(|\xi|^{\frac{2s_1}{p}}|\hat{u}|^{\frac{2}{p}}\rt) \lt(|\xi|^{\frac{2s_2}{q}}|\hat{u}|^{\frac{2}{q}}\rt)\,\ud \xi \leq \left(\int_{\bR^d}|\xi|^{2s_1}|\hat{u}|^{2}\right)^{\frac{1}{p}}\left(\int_{\bR^d}|\xi|^{2s_2}|\hat{u}|^{2}\right)^{\frac{1}{q}}.
\end{equation*}
\item[(ii)]
As a direct consequence of {[Lemma 2.1, \cite{JKL2022}]}, we obtain that for $s \ge0$ and $p,q > 0$,
\[
\norm{\Lam^s u}_{L^\infty (\bR^d)}
\le C \norm{|\xi|^s \hat{u}}_{L^1 (\bR^d)}
\le C \norm{|\xi|^{s + \frac{d}{2} + p} \hat{u}}_{L^2 (\bR^d)}^\frac{q}{p+q}
\norm{|\xi|^{s + \frac{d}{2} - q} \hat{u}}_{L^2 (\bR^d)}^\frac{p}{p+q}.
\]
Choosing $p = s_2 - (\frac{d}{2} + s)$ and $q = \frac{d}{2} + s - s_1$ follows the desired result.
\end{enumerate}
\end{proof}

\begin{lemma}
Let $s \ge 1$.
Suppose that $h:\Omega \rightarrow \bR$ is a smooth real-valued function on $\Omega$ with $h(0)=0$. We also assume that $f : \bR^{d} \rightarrow \Omega$ is a continuous function with $\text{Im}(f) \subset \Omega_1$, $\overline{\Omega}_1 \subset \subset \Omega$, and $f \in L^\infty \cap H^s$. We denote the composition of $h$ and $f$ by $h(f)$. Then, the composition $h (f)$ belongs to $H^s (\bR^d)$, and it holds that
\begin{equation}
\label{E92}
\norm{ \Lam^s h (f)}_{L^2(\bR^d)}
\le C
\norm{h'}_{C^{\lceil s-1 \rceil} (\overline{ \Omega_1})} 
(1 + \norm{\nabla f}_{L^2(\bR^d)}^{\lceil s-1 \rceil})
\norm{ \Lam^s f}_{L^2(\bR^d)},
\end{equation}
where $C$ is a constant depending only on $s\geq1$ and $d$.
\end{lemma}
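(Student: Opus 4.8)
The plan is to combine a Faà di Bruno expansion with Gagliardo--Nirenberg interpolation, after peeling off the non-integer part of the derivative. Set $k := \lceil s-1 \rceil$ and $\sigma := s-k \in (0,1]$, so $s = k+\sigma$. By density it suffices to prove the bound for $f$ Schwartz, the right-hand side involving only $\norm{\nabla f}_{L^2}$, $\norm{\Lam^s f}_{L^2}$, and the fixed constant $\norm{h'}_{C^k(\overline{\Omega_1})}$; the general case then follows by mollification, noting that $h(0)=0$ together with $f \in L^2 \cap L^\infty$ and $\mathrm{Im}(f)\subset\overline{\Omega_1}$ gives $h(f)\in L^2$. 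First I would use Plancherel and the multinomial identity $\sum_{|\alpha|=k}\frac{k!}{\alpha!}|\xi^\alpha|^2 = |\xi|^{2k}$ to write
\[
\norm{\Lam^s h(f)}_{L^2}^2 = \sum_{|\alpha|=k}\frac{k!}{\alpha!}\,\norm{\Lam^{\sigma}\partial^{\alpha} h(f)}_{L^2}^2,
\]
reducing matters to estimating $\Lam^{\sigma}\partial^{\alpha}h(f)$ for each multi-index $\alpha$ with $|\alpha|=k$, where now the surviving fractional order $\sigma$ lies in $(0,1]$.

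Next, Faà di Bruno's formula yields $\partial^{\alpha} h(f) = \sum_{j=1}^{k} h^{(j)}(f)\,P_{\alpha,j}$, where each $P_{\alpha,j}$ is a finite sum of products $\partial^{\beta_1}f\cdots\partial^{\beta_j}f$ with $\beta_1+\cdots+\beta_j=\alpha$ and $|\beta_i|\geq 1$. I would then apply $\Lam^{\sigma}$ and the fractional Leibniz product estimate of Lemma~\ref{L24} to each summand, distributing $\Lam^{\sigma}$ across the factors $h^{(j)}(f),\partial^{\beta_1}f,\dots,\partial^{\beta_j}f$. When $\Lam^{\sigma}$ lands on some $\partial^{\beta_i}f$ it produces the genuine fractional derivative $\Lam^{\sigma}\partial^{\beta_i}f$ of $f$; when it lands on $h^{(j)}(f)$ I would invoke the fractional chain rule for exponents in $(0,1)$, namely $\norm{\Lam^{\sigma}(h^{(j)}(f))}_{L^p}\lesssim \norm{h^{(j+1)}(f)}_{L^\infty}\norm{\Lam^{\sigma}f}_{L^p}$, which again reduces everything to fractional derivatives of $f$ alone. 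In every resulting term the $h$-factor is some $h^{(j')}(f)$ with $j'\leq k+1$, bounded in $L^\infty$ by $\norm{h'}_{C^{k}(\overline{\Omega_1})}$ since $f$ takes values in $\overline{\Omega_1}$, while the remaining $m$ factors ($m\leq k+1$) are derivatives of $f$ whose orders, counting the extra $\sigma$, sum to exactly $s$.

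It then remains to control each such product of $m$ derivative-factors of $f$ of total order $s$ in $L^2$. I would bound it by generalized Hölder, $\norm{\prod_i \Lam^{\rho_i}f}_{L^2}\leq \prod_i \norm{\Lam^{\rho_i}f}_{L^{p_i}}$ with $\sum_i \tfrac{1}{p_i}=\tfrac12$ and $\sum_i\rho_i=s$, and then interpolate each factor between $\nabla f$ and $\Lam^s f$ via Lemma~\ref{L22}, $\norm{\Lam^{\rho_i}f}_{L^{p_i}}\lesssim \norm{\nabla f}_{L^2}^{1-\theta_i}\norm{\Lam^s f}_{L^2}^{\theta_i}$, choosing the weights so that $\sum_i \theta_i = 1$. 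This makes $\prod_i\norm{\Lam^s f}_{L^2}^{\theta_i}=\norm{\Lam^s f}_{L^2}$ and $\prod_i\norm{\nabla f}_{L^2}^{1-\theta_i}=\norm{\nabla f}_{L^2}^{m-1}$ with $m-1\leq k$. Summing over $j$, the products, and $\alpha$, and bounding the $h$-factors by $\norm{h'}_{C^k(\overline{\Omega_1})}$, produces
\[
\norm{\Lam^s h(f)}_{L^2}\lesssim \norm{h'}_{C^k(\overline{\Omega_1})}\sum_{m=1}^{k+1}\norm{\nabla f}_{L^2}^{m-1}\norm{\Lam^s f}_{L^2}\lesssim \norm{h'}_{C^k}\bigl(1+\norm{\nabla f}_{L^2}^{k}\bigr)\norm{\Lam^s f}_{L^2},
\]
which is exactly the claim. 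The hard part will be twofold: justifying the fractional chain rule for $\Lam^{\sigma}$ applied to the compositions $h^{(j)}(f)$ with $\sigma\in(0,1)$ (which is standard but not among the cited estimates and would be established by a Littlewood--Paley or commutator argument), and carrying out the exponent bookkeeping so that the interpolation weights satisfy $\sum_i\theta_i=1$ simultaneously with the Hölder balance $\sum_i \tfrac{1}{p_i}=\tfrac12$. It is precisely this compatibility that forces the top-order norm $\norm{\Lam^s f}_{L^2}$ to enter linearly and the lower-order factors to appear only through $\norm{\nabla f}_{L^2}$, the balance being clean in the $d=2$ setting relevant to the paper.
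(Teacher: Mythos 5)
Your argument is essentially correct, but it takes a genuinely different route from the paper's. The paper never invokes Fa\`a di Bruno: it proves the integer case $s=k$ by induction, peeling off one full gradient at each step via $\nabla^{k+1}h(f)=\nabla^{k}\bigl(h'(f)\nabla f\bigr)$ and applying Kato--Ponce to the two-factor product $h'(f)\nabla f$, so that only the ordinary chain rule and the induction hypothesis (applied to $h'$) are ever needed; the non-integer case $s=k+\delta$ is then handled by a single further Kato--Ponce application, $\norm{\Lam^{k-1+\delta}(h'(f)\nabla f)}_{L^2}\lesssim\norm{\Lam^{k}h'(f)}_{L^{2/\delta}}\norm{\nabla f}_{L^{2/(1-\delta)}}+\norm{h'}_{L^\infty}\norm{\Lam^{s}f}_{L^2}$, with the integer-case result and Gagliardo--Nirenberg finishing the job. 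Your expansion via Plancherel, the multinomial identity, and Fa\`a di Bruno reaches the same terms all at once, but at the price of one ingredient the paper's route avoids entirely: the fractional chain rule $\norm{\Lam^{\sigma}(h^{(j)}(f))}_{L^{p}}\lesssim\norm{h^{(j+1)}(f)}_{L^{\infty}}\norm{\Lam^{\sigma}f}_{L^{p}}$ for $\sigma\in(0,1)$ and general $p$. You correctly flag that this is not among the cited lemmas; it is standard (for $p=2$ it is immediate from the Gagliardo seminorm characterization of $\dot H^{\sigma}$, and for general $p$ it is the Christ--Weinstein/Staffilani chain rule), but it must be supplied, whereas the paper's ordering of operations (always extracting a whole $\nabla$ before applying the product rule) sidesteps it. Your exponent bookkeeping does close up: with $\sum_i\rho_i=s$, the weights $\theta_i\in[\tfrac{\rho_i-1}{s-1},\tfrac{\rho_i}{s-1})$ can be chosen with $\sum_i\theta_i=1$, and then $\sum_i\tfrac{1}{p_i}=\tfrac12$ holds precisely when $d=2$ (or $m=1$), as you observe; note that the paper's own proof has the same implicit restriction (its inequality $\norm{\nabla f}_{L^4}^2\le\norm{\nabla f}_{L^2}\norm{\nabla^2 f}_{L^2}$ is the two-dimensional Ladyzhenskaya inequality), even though the lemma is stated for general $d$, so this is not a defect of your argument relative to the paper's. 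What your approach buys is a one-pass, non-inductive derivation that makes the combinatorics of which norms appear completely explicit; what the paper's buys is economy of tools, needing nothing beyond the classical chain rule, Kato--Ponce, and Gagliardo--Nirenberg.
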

\begin{proof}
We define $h_0(x) := h(x) - h'(0)x$. Then, we can write $h(f) = h_0(f) + h'(0)f$. Clearly, it holds that $\norm{ \Lam^s h'(0) (f)}_{L^2}
\le |h'(0)| \norm{ \Lam^s f}_{L^2} \leq 
\norm{h'}_{C^{s-1}} 
\norm{ \Lam^s f}_{L^2}$.
Therefore, we only need to investigate $h_0$. Note that $h_0(0) = h_0'(0) = 0$. Without loss of generality, it  is sufficient to assume $h'(0)=0$.

We assume first that $f \in \cC^{\infty}_c(\bR^d)$. 
For the case where $s$ is an integer, we can proceed with mathematical induction combining the chain rule, the interpolation inequality \eqref{E23}, and the Gagliardo--Nirenberg inequality \eqref{E21}. That is, for a positive integer $k$, we claim that
\begin{equation}
\label{claim_apen.b}
\norm{ \nabla ^k h (f)}_{L^2(\bR^d)}
\le C
\norm{h'}_{C^{k-1} (\overline{ \Omega_1})} 
(1 + \norm{\nabla f}_{L^2(\bR^d)}^{k-1})
\norm{ \nabla ^k f}_{L^2(\bR^d)}.
\end{equation}
For $k=1$, the statement follows directly from $\nabla (h(f)) = h'(f)\nabla f$. For $k=2$, we can combine $\nabla^2 (h(f)) = h''(f)(\nabla f:\nabla f) + h'(f)\nabla^2 f$ with $\|\nabla f\|_{L^4}^2 \leq \|\nabla f\|_{L^2}\|\nabla^2 f\|_{L^2}$, to show that \eqref{claim_apen.b} holds. Assuming \eqref{claim_apen.b} holds for $k\geq2$, one can use Gagilrado-Nirenberg inequality to deduce that
\begin{align*}
\norm{ \nabla ^{k+1} h (f)}_{L^2} &= 
\norm{ \nabla ^{k} (h' (f)\nabla f)}_{L^2} 
\leq \norm{ \nabla ^{k} h' (f)}_{L^2} 
\norm{\nabla f}_{L^\infty}\\ 
& \leq C\norm{h''}_{C^{k-1} (\overline{ \Omega_1})} 
(1 + \norm{\nabla f}_{L^2}^{k-1})
\norm{ \nabla ^k f}_{L^2}\norm{\nabla f}_{L^\infty}\\
& \leq C\norm{h'}_{C^{k} (\overline{ \Omega_1})} 
(1 + \norm{\nabla f}_{L^2}^{k-1})
\norm{ \nabla ^{k+1} f}_{L^2}\norm{\nabla f}_{L^2}\\
&\leq C\norm{h'}_{C^{k} (\overline{ \Omega_1})} 
(1 + \norm{\nabla f}_{L^2}^{k})
\norm{ \nabla ^{k+1} f}_{L^2}.
\end{align*}
Thus, by mathematical induction, \eqref{claim_apen.b} is shown to hold for all natural numbers.

Suppose that $s$ is not an integer, say $s=k+\delta$ for some $k \in \bN$ and $0<\delta<1$. 
Then, 
\begin{equation*}
\norm {\Lambda^s h(f)}_{L^2}
=\norm {\Lambda^{k-1+\delta} \nabla (h(f))}_{L^2} 
=\norm {\Lambda^{k-1+\delta} (h'(f)\nabla f)}_{L^2}. 
\end{equation*}
Note that the assumption $h'(0)=0$ implies that $h'(f) \in \cC^{\infty}_c(\bR^d)$. Hence, it follows from \eqref{claim_apen.b} that
\begin{align*}
\norm {\Lambda^{k-1+\delta} (h'(f)\nabla f)}_{L^2}
&\lesssim \norm {\Lambda^{k-1+\delta} h'(f)}_{L^{\frac{2}{\delta}}}
\norm {\nabla f}_{L^{\frac{2}{1-\delta}}}
+
\norm {h'(f)}_{L^{\infty}}
\norm {\Lambda^{k-1+\delta} \nabla f}_{L^2}\\
&\lesssim 
\norm {\Lambda^{k} h'(f)}_{L^2}
\norm {\Lambda^{1+\delta} f}_{L^2}
+
\norm {h'}_{L^{\infty}}
\norm {\Lambda^s f}_{L^2}\\
&\lesssim 
\norm{h''}_{C^{k-1}} 
(1 + \norm{\nabla f}_{L^2}^{k-1})
\norm{ \nabla ^k f}_{L^2}
\norm {\Lambda^{1+\delta} f}_{L^2}
+
\norm {h'}_{L^{\infty}}
\norm {\Lambda^s f}_{L^2}\\
&\lesssim 
\norm{h'}_{C^k}
\left( 
(1 + \norm{\nabla f}_{L^2}^{k-1})
\norm{ \Lambda^{k+\delta} f}_{L^2}
\norm {\nabla f}_{L^2}
+
\norm {\Lambda^s f}_{L^2}
\right)\\
&\lesssim 
\norm{h'}_{C^k}
(1 + \norm{\nabla f}_{L^2}^k)
\norm {\Lambda^s f}_{L^2}.
\end{align*}
This shows that \eqref{E92} holds for $f \in C^{\infty}_c(\bR^d)$. Now, for general $f \in L^{\infty}\cap H^s$, we can use standard approximation argument to conclude the proof.
\end{proof}

\section*{Acknowledgement}

We are grateful to Dr. Jaeyong Shin for his valuable discussions and insights.

This research of H. In was supported by the National Research Foundation of Korea(NRF) grant funded by the Korea government(MSIT) (No. RS-2024-00360798) and Basic Science Research Program through the  National Research Foundation of Korea(NRF) funded by the Ministry of Education(RS-2021-NR060141). 

This research of J. Kim was supported by the National Research Foundation of Korea(NRF) grant funded by the Korea government(MSIT) (No. RS-2024-00360798)


\end{document}